\documentclass[12pt]{amsart}

\usepackage{amsmath}
\usepackage{amssymb}
\usepackage{amsthm}
\usepackage{mathrsfs}
\usepackage{hyperref}
\usepackage{esint}
\usepackage{enumerate}

\usepackage{comment}

\DeclareMathOperator{\Div}{div}

\DeclareMathOperator{\tr}{tr}
\DeclareMathOperator{\supp}{spt}
\DeclareMathOperator{\sym}{Sym}
\DeclareMathOperator{\Cyl}{Cyl}
\DeclareMathOperator{\diam}{diam}

\renewcommand{\AA}{\textit{\r{A}}}
\newcommand{\R}{\ensuremath{\mathbb{R}}}

\newtheorem{theorem}{Theorem}[section]
\newtheorem{lemma}[theorem]{Lemma}
\newtheorem{proposition}[theorem]{Proposition}
\newtheorem{corollary}[theorem]{Corollary}
\newtheorem{remark}[theorem]{Remark}

\newdimen\slantmathcorr
\def\oversl#1{
\setbox0=\hbox{$#1$}
\slantmathcorr=\wd0
\hskip 0.2\slantmathcorr \overline{\hbox to 0.8\wd0{%
\vphantom{\hbox{$#1$}}}}
\hskip-\wd0\hbox{$#1$}
}

\def\undersl#1{
\setbox0=\hbox{$#1$}
\slantmathcorr=\wd0
\underline{\hbox to 0.8\wd0{%
\vphantom{\hbox{$#1$}}}}
\hskip-0.8\wd0\hbox{$#1$}
}

\title[Sharp one-sided curvature estimates]{Sharp one-sided curvature estimates for fully nonlinear curvature flows and applications to ancient solutions}
\author{Mat Langford and Stephen Lynch}

\begin{document}

\begin{abstract}
We prove several sharp one-sided pinching estimates for immersed and embedded hypersurfaces evolving by various fully nonlinear, one-homogeneous curvature flows by the method of Stampacchia iteration. These include sharp estimates for the largest principal curvature and the inscribed curvature (`cylindrical estimates') for flows by concave speeds and a sharp estimate for the exscribed curvature for flows by convex speeds. 
Making use of a recent idea of Huisken and Sinestrari, we then obtain corresponding estimates for ancient solutions. In particular, this leads to various characterisations of the shrinking sphere amongst ancient solutions of these flows.
\end{abstract}

\maketitle
\tableofcontents

\section{Introduction}

Let $M$ be a smooth, closed manifold of dimension $n \geq 2$ and $I \subset \mathbb{R}$ an interval. We are interested in smooth families of smooth, orientable immersions $X:M\times I \to \mathbb{R}^{n+1}$ that evolve according to the equation
\begin{equation}
\label{eq:CF}
\tag{CF}
\partial_t X(x,t) = -F(x,t)\nu (x,t)\,.
\end{equation}
Here, $\nu(\cdot,t)$ is a smoothly time-dependent unit normal vectorfield for the immersion $X_t:=X(\cdot,t)$ and $F(x,t)$ depends purely on the principal curvatures $\kappa_1(x,t)\leq \dots\leq \kappa_n(x,t)$ (the eigenvalues of the shape operator $A=D\nu$ at $(x,t)$) of $X_t$ at $x$ in the following way: We assume there is an open, symmetric cone $\Gamma \subset \mathbb{R}^n$ and a smooth, symmetric, one-homogeneous function $f:\Gamma \to \mathbb{R}_+$ such that $F(x,t) = f(\kappa(x,t))$, where $\kappa$ denotes the $n$-tuple $(\kappa_1,\dots,\kappa_n)$. Symmetry ensures that $F$ may equally be viewed as a smooth, orthonormal basis-invariant function of the components of $A$ \cite{Gl}. We further require that $f$ be monotone increasing in each argument---an ellipticity condition for \eqref{eq:CF} which, in particular, guarantees the short-time existence of a solution of \eqref{eq:CF} starting from any smooth, closed initial immersion on which $\kappa$ takes values in $\Gamma$ \cite{HuPol}. We will refer to such functions $f:\Gamma\to\R$ as \emph{admissible speeds}. Unless $n=2$, we usually require that $\Gamma$ is a convex cone and $f$ is either concave or convex. Flows in this class tend to exhibit similar behaviour to that of the mean curvature flow.

\subsection{Uniform ellipticity} Just as is the case for solutions of the mean curvature flow, comparison with shrinking sphere solutions indicates that compact solutions of \eqref{eq:CF} necessarily become singular in finite time \cite[Theorem 4.32 and Remarks 4.6]{L}. On the other hand, due to the fully nonlinear nature of \eqref{eq:CF}, characterising the singular time by curvature blow-up is a subtle issue; indeed, a poor choice of the domain $\Gamma$ can lead to unnatural `type-0' singularities, whereby the curvature $n$-tuple $\kappa$ of the solution leaves $\Gamma$. Since we are interested here in proving a priori estimates, we shall always simply assume that $\kappa$ stays inside the closure of some open, symmetric cone $\Gamma_0$ satisfying $\Gamma_0\Subset \Gamma$, by which we mean $\overline\Gamma_0\cap S^n\subset \Gamma$. Under this `uniform ellipticity' condition, the problem of long-time regularity is reduced to the deep problem of obtaining second-derivative H\"{o}lder estimates for solutions of fully non-linear, uniformly parabolic PDE. Such results follow from the celebrated Harnack inequality of Krylov and Safonov in case $f$ is either concave or convex \cite{Kry81}; a more recent result of Andrews \cite{An04} covers the case $n=2$ without the need for additional concavity conditions.




We note that, in particular, it is always possible to find preserved cones $\Gamma_0\Subset\Gamma$ in the following situations:
\begin{itemize}
\item[--] $n=2$ \cite{An10,ALM15}
\item[--] $f:\Gamma\to\R$ is convex and $\Gamma_+\subset \Gamma$ \cite{L}
\item[--] $f:\Gamma\to\R$ is concave and either $\Gamma=\Gamma_+$ and $f$ is inverse-concave\footnote{Meaning that the function $f_\ast:\Gamma_+\to\R$ defined by $f_\ast(z_1^{-1},\dots,z_n^{-1}):=f(z)^{-1}$ is concave.} \cite{An07}, or $f|_{\partial\Gamma}=0$ \cite{An94a},
\end{itemize}
where $\Gamma_+:=\{z\in \R^n:z_i>0\,\, \forall\, i=1,\dots,n\}$ is the positive cone. See \cite{AMY} for a more comprehensive discussion of this important issue in case $\Gamma=\Gamma_+$.



It is worth noting that the class of admissible flows is quite large. For example, the class of convex admissible flow speeds includes the mean curvature, the power means with power $r\geq 1$ and 1-homogeneous convex combinations of these \cite{ALM14}. The class of concave admissible flow speeds includes the mean curvature, the power means with power $r\leq 1$, one-homogeneous roots of ratios of the elementary symmetric polynomials and 1-homogeneous concave combinations of these \cite{An07}. For a characterisation of admissible flow speeds when $n=2$, see \cite{ALM15}.

\subsection{One-sided curvature pinching}

When $F$ is given by the mean curvature, $H$, much more is known about the geometry of solutions near a singularity. In \cite{Huisk84}, Huisken demonstrated that any solution of mean curvature flow which is strictly convex contracts to a round point. A key ingredient in the proof was the umbilic estimate, which states that, for any $\varepsilon >0$, there exists a constant $C_\varepsilon>0 $ depending only on $\varepsilon$ and the initial (convex) hypersurface such that
\[|\AA|^2\leq \varepsilon H^2 + C_\varepsilon\,,\]
where $|\AA|^2=|A|^2-\frac{1}{n}H^2$ is the squared norm of the trace-free second fundamental form, $\AA:=A-\frac{1}{n}Hg$. The umbilic estimate shows that any point at which $H$ blows up becomes umbilic in the limit. To prove this estimate, Huisken used Simons' identity to derive $L^p$ estimates for the function $G_\sigma:=|\AA|^2H^{\sigma-2}$ for $p\gg 1$ and $\sigma\sim p^{-\frac{1}{2}}$. Stampacchia iteration, with the help of the Sobolev inequality of Michael and Simon, was then used to translate these into an $L^\infty$ estimate. Huisken and Sinestrari \cite{HuSi99a,HuSi99b} later used similar techniques to prove the convexity estimate (see also \cite{Wh03}),
\[\kappa_1 \geq - \varepsilon H - C_\varepsilon\,,\]
for mean-convex ($H > 0$) solutions of mean curvature flow, which shows that mean convex solutions become weakly convex at a singularity. Interpolating between the umbilic and convexity estimates are the cylindrical estimates \cite{HuSi99a},
\begin{equation}
\label{cylindrical_H}
|A|^2 - \frac{1}{n-m}H^2 \leq \varepsilon H^2 + C_\varepsilon,
\end{equation}
which hold on $(m+1)$-convex solutions, that is, those satisfying $H>0$ and $\kappa_1 + \dots + \kappa_{m+1} \geq \alpha H$ for some $\alpha >0$. The left-hand side of \eqref{cylindrical_H} is non-positive only at points where $\kappa_1 + \dots + \kappa_m > 0$ or $0 = \kappa_1=\dots=\kappa_m$ and $0<\kappa_{m+1}=\dots=\kappa_n$; so, again, these estimates place marked restrictions on the geometry of solutions at a singularity. 

These results are now known to hold in much greater generality: By carefully constructing pinching functions adapted to the speed function, and adapting the arguments of Huisken and Huisken--Sinestrari, it is possible to obtain analogous convexity estimates for solutions of \eqref{eq:CF} whenever $F$ is a convex admissible speed \cite{ALM14}. Moreover, the convexity assumption for the speed can be removed when $n=2$ \cite{ALM15}. 
In addition, for flows by convex admissible speeds, a family of cylindrical estimates holds for solutions that are $(m+1)$-convex, in the sense that $\kappa_1+\dots+\kappa_{m+1} \geq \alpha F$ for some $\alpha>0$ \cite{AnLa14}. Umbilic estimates have been obtained for flows by convex admissible speeds, certain concave admissible speeds and flows of surfaces by any admissible speed \cite{Cw85,Cw87,L}. 
More recently, a cylindrical estimate analogous to the $m=1$ case of \eqref{cylindrical_H} has recently been proven for flows of two-convex hypersurfaces by a class of concave speeds which includes the two-harmonic mean curvature \cite{HuBre}. In this special situation, the cylindrical estimate actually implies a convexity estimate. In the present work, we combine techniques from \cite{ALM14} and \cite{HuBre} to prove a family of cylindrical estimates for more general concave speeds.

Given $m\in\{0,\dots,n-1\}$, denote by $\Gamma_{m+1}$ the open, convex cone
\[
\Gamma_{m+1} := \bigcap_{\sigma \in P_n} \{z \in \mathbb{R}^n : z_{\sigma(1)} + \dots + z_{\sigma(m+1)} >0\}\,,
\]
where $P_n$ is the set of permutations of $\{1,\dots,n\}$. Recall that, given two open cones $\Gamma_0$, $\Gamma\subset \R^n$, we write $\Gamma_0\Subset \Gamma$ if $\overline \Gamma_0\cap S^{n}\subset \Gamma$.

\begin{theorem}
\label{cylindrical_ests}
Let $f:\Gamma\to\R_+$ be a concave admissible speed. Let $X:M\times [0,T)\to \mathbb{R}^{n+1}$ be a compact solution of \eqref{eq:CF} on which $\kappa$ takes values in some open, symmetric cone $\Gamma_0 \Subset \Gamma\cap\Gamma_{m+1}$, $m\in \{0,\dots,n-1\}$. Then for every $\varepsilon >0$ there exists $C_\varepsilon = C_\varepsilon(n,f,\Gamma_0,X_0,\varepsilon)$ such that
\[\kappa_n - c_m F \leq \varepsilon F + C_\varepsilon\quad \text{on} \quad M \times [0,T)\,,\]
where $c_m^{-1}$ is the value $F$ takes on the cylinder $\R^m\times S^{n-m}$:
\[c_m^{-1} := f(\underbrace{0, \dots, 0}_{m\text{\emph{-times}}}, 1, \dots, 1).\]
\end{theorem}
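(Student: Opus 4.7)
The plan is to apply Stampacchia iteration to an appropriate normalized pinching function, following the template of \cite{HuSi99a,HuSi99b,AnLa14,HuBre}. For a small parameter $\sigma > 0$ to be chosen, I would consider
\[
G_\sigma := (\kappa_n - c_m F)_+\, F^{\sigma-1}\,,
\]
using a smooth, symmetric approximation of $\kappa_n$ at points of eigenvalue multiplicity (for instance via large-power means of $A$, or by testing against a unit vector field and taking a supremum at the end, exploiting the semi-continuity of the top eigenvalue). A uniform $L^\infty$ bound on $G_\sigma$ gives $\kappa_n - c_m F \leq C F^{1-\sigma}$, and Young's inequality then delivers the theorem for any prescribed $\varepsilon > 0$.

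The first substantive step is to derive a parabolic differential inequality of the form
\[
\partial_t G_\sigma - \dot F^{kl}\nabla_k \nabla_l G_\sigma \leq \frac{2(1-\sigma)}{F}\dot F^{kl}\nabla_k F \, \nabla_l G_\sigma + R\,,
\]
where the reaction $R$ must be controllable in integral norms. The essential algebraic input is the characterisation $c_m^{-1} = f(0,\ldots,0,1,\ldots,1)$ of the sharp constant: concavity of $f$, combined with the one-homogeneity identity $\dot f^{\,i}(z) z_i = f(z)$, pins $c_m$ as the maximal value of $\kappa_n/F$ attainable at a cylindrical configuration, and the hypothesis $\Gamma_0 \Subset \Gamma \cap \Gamma_{m+1}$ furnishes quantitative coercivity of $R$ away from such configurations. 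This is the concave-speed analogue of the pinching identity used for convex speeds in \cite{AnLa14,ALM14}; it should reduce in the case $m=1$ to the identity of \cite{HuBre}.

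The remainder of the argument is by now standard. Multiplying by $(G_\sigma - k)_+^{p-1}$, integrating over $M_t$, and using the divergence form of $\dot F^{kl}\nabla_k\nabla_l$ together with the Michael--Simon Sobolev inequality, one obtains an $L^p$ bound on $(G_\sigma - k)_+$ in terms of the measure of the superlevel set $\{G_\sigma > k\}$. The ``bad'' terms produced by the factor $F^{\sigma-1}$ can be absorbed by the good gradient term provided $\sigma \lesssim p^{-1/2}$, exactly as in Huisken's umbilic estimate. Stampacchia's iteration lemma then upgrades this to the required $L^\infty$ bound on $G_\sigma$.

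In my view the main obstacle is the algebraic/concavity-analytic step that isolates $c_m$ as the sharp constant and delivers the right sign on $R$ under \emph{concavity} (rather than convexity) of $f$. For convex speeds, $\ddot F \geq 0$ delivers a favourable contribution automatically and the reaction can be handled essentially as in \cite{AnLa14}; for concave speeds one must instead carefully exploit the tangent-hyperplane inequality for $f$ at cylindrical eigenvalue configurations $(0,\ldots,0,1,\ldots,1)$, together with the quantitative $m$-convexity encoded in $\Gamma_0 \Subset \Gamma_{m+1}$. The non-smoothness of $\kappa_n$ at points of multiplicity is a routine but non-trivial technical matter, handled by smooth symmetric approximation or viscosity-type differentiation arguments.
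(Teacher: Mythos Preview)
Your outline has the right high-level structure, and the paper even remarks that one may work with $G_1:=\kappa_n-c_mF$ directly (in the viscosity/distributional sense) rather than smooth it. But there is a genuine gap: you never identify a source for the good gradient-of-curvature term $\int G_\sigma^p\,|\nabla A|^2/F^2$, and without it the iteration does not close.

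Concretely, the ``algebraic/concavity'' step you flag as the main obstacle is actually the easy part: for concave $f$ one checks directly (Lemma~\ref{pinching_evol}) that $(\partial_t-\Delta_F)G_1\le |A|^2_F G_1$, with no gradient term of either sign appearing. Passing to $G_\sigma=G_1 F^{\sigma-1}$ produces the reaction $\sigma|A|^2_F G_\sigma$, and the integration-by-parts of $\Delta_F$ only yields $-p(p-1)\int G_\sigma^{p-2}|\nabla G_\sigma|^2$. To kill the reaction term one needs the Poincar\'e-type inequality (Lemma~\ref{lem:Poincare}), which bounds $\int G_\sigma^p|A|^2$ by \emph{both} $\int G_\sigma^{p-2}|\nabla G_\sigma|^2$ and $\int G_\sigma^p|\nabla A|^2/F^2$. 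You have the first but not the second.

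The paper manufactures the missing term by replacing $G_1$ with $G:=G_1^2/G_2$, where $G_2:=2\Theta F-H-|A|$. Since $N:=|A|+H$ is monotone and strictly convex in non-radial directions while $f$ is concave, one gets $Q_{G_2,F}(\nabla A)\ge \gamma|\nabla A|^2/F$ on the support of $(G-\varepsilon F)_+$ (Lemma~\ref{lem:cylindrical_goodgrad}), and hence $(\partial_t-\Delta_F)G\le |A|^2_F G-\gamma G|\nabla A|^2/F^2$. This is the device from \cite{ALM14,HuBre} that your outline is missing; the ``tangent-hyperplane inequality at cylindrical configurations'' does not by itself furnish any such term. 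Once you have it, the $L^p$ estimate and Stampacchia iteration proceed as you indicate.
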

\begin{remark}\mbox{}
\begin{itemize}
\item If $m=n-1$, then the inclusion $\Gamma\subset\Gamma_{m+1}$ already follows from concavity of $f$.
\item The constant $C_\varepsilon$ can be written as $\tilde C_\varepsilon R^{-1}$, where \[\tilde C_\varepsilon = \tilde C_\varepsilon (n,f,\Gamma_0,V,\tau,\varepsilon)\] is scaling invariant, the scale parameter $R^{-1}$ is an upper bound for $\max_{M\times\{0\}}F$, $VR^n$ is an upper bound for the initial area $\mu_0(M)$ and $\tau R^2$ is an upper bound for the maximal time $T$. (Up to a constant depending on $n$ and $\Gamma_0$) we can take $\tau R^2$ to be a bound for the square of the initial circum-radius or the inverse square of the initial minimum of $F$.
\end{itemize}
\end{remark}

Observe that, even in the special two-convex case, our estimate is slightly sharper than the one proved in \cite{HuBre}. This will be important in \S \ref{sec_insc}, where we use Theorem \ref{cylindrical_ests} to prove a sharp estimate for the inscribed curvature using Stampacchia iteration.

\subsection{Inscribed and exscribed curvature pinching}

Suppose that $M$ is properly embedded in $\mathbb{R}^{n+1}$ and bounds a precompact open set $\Omega\subset \R^{n+1}$, and let $\nu$ denote the outward pointing unit normal on $M$. The \emph{inscribed curvature} $\overline k(x)$ at $x\in M$ is then defined as the curvature of the boundary of the largest ball contained in $\Omega$ which has first order contact with $M$ at $x$. By a straightforward computation \cite{An12,ALM13}, $\overline k$ is given by
\begin{equation}
\label{insc_def}
\overline k (x) := \sup_{y\in M\setminus\{x\}} k(x,y),
\end{equation}
where $k:(M\times M)\setminus\{(x,x):x\in M\}\to\R$ is given by 
\[k(x,y) := \frac{2\langle x-y, \nu(x)\rangle}{|x-y|^2}\,.\] 
If the supremum in \eqref{insc_def} is attained at some $y\neq x$, then there is a ball contained in $\Omega$ which is tangent to $M$ at $x$ and $y$ and $\overline k(x)$ equals the curvature of its boundary sphere. Otherwise,
\[\overline k(x) = \limsup_{y \to x} k(x,y) = \sup_{y \in T_xM} \frac{A_x(y,y)}{g_x(y,y)}\,.\]
Similarly, the \emph{exscribed curvature} $\underline k$ is defined as the curvature of the boundary of the largest ball, halfspace or ball-complement (equipped with their outward pointing normals) which is contained in the complement of $\overline\Omega$ and has first order contact with $M$ at $x$. Equivalently, $\underline k$ is given by
\[\underline k (x) := \inf_{y \in \setminus\{x\}} k(x,y)\,.\]
We note that changing the orientation of $M$ interchanges $\overline k $ and $\underline k$, and that each of the equalities $\overline k = \frac{1}{n}H$ and $\underline k = \frac{1}{n} H$ characterises the round spheres.

In a major recent breakthrough, Andrews \cite{An12} showed, using only the maximum principle, that the maximum of $\overline k/H$ is non-increasing along an embedded, mean convex solution of mean curvature flow, while the minimum of $\underline k/H$ is non-decreasing, providing a simple proof of earlier `non-collapsing' results of White \cite{Wh03} and Sheng--Wang \cite{ShWa09}. 

These estimates have been referred to as \emph{interior} and \emph{exterior non-collapsing} estimates respectively. For flows of convex hypersurfaces, a straightforward blow-up argument and the strong maximum principle show that the collapsing ratios $\overline k/H$ and $\underline k/H$ actually improve, which yields a unified proof of the convergence results of Huisken \cite{Huisk84} and Gage--Hamilton \cite{GaHa86}. Using the convexity estimate and Stampacchia iteration, Brendle was able to prove that the collapsing ratios also improve to a sharp value at a singularity under mean convex mean curvature flow; namely,
\begin{equation}\label{inscribed_improves_MCF}
\overline k-H \leq \varepsilon H + C_\varepsilon
\end{equation}
and 
\begin{equation}\label{exscribed_improves_MCF}
\underline k \geq - \varepsilon H - C_\varepsilon\,.
\end{equation}
Recently, a similar result for the inscribed curvature was proved by Brendle and Hung for the two-harmonic mean curvature flow \cite{Hung}. 

In a remarkable recent development, Haslhofer and Kleiner have obtained local curvature estimates for embedded mean convex mean curvature flow \cite{HK1}. In particular, these curvature estimates can be used to prove \eqref{inscribed_improves_MCF} and \eqref{exscribed_improves_MCF} by a blow-up argument \cite{HK2}. Subsequently, the inscribed curvature estimate estimate \eqref{inscribed_improves_MCF} has been extended to the family of estimates
\begin{equation}\label{insc_improves_MCF_cylindrical}
\overline k-\frac{1}{n-m}H \leq \varepsilon H + C_\varepsilon
\end{equation}
for $(m+1)$-convex solutions, $m \leq n-1$, first using a blow-up argument (making use of the Haslhofer--Kleiner estimates) \cite{La15}, and then using Stampacchia iteration \cite{Lan16}.

For fully nonlinear flows, the non-collapsing picture is not so straightforward: whereas flows by concave speeds are interior non-collapsing and flows by convex speeds are exterior non-collapsing \cite{ALM13}, two-sided non-collapsing is only known to hold in certain special cases: For the mean curvature flow (the mean curvature being a linear function of the curvatures) and flows of convex hypersurfaces by concave, \emph{inverse-concave} speeds \cite{NC2}. This provides a large class of two-sided non-collapsing flows of \emph{convex} hypersurfaces but none, beyond the mean curvature flow, of non-convex hypersurfaces. On the other hand, we cannot hope for too much here: Since even flows by concave speeds may not preserve convexity of solutions \cite{AMY}, we cannot expect their exscribed curvature to behave particularly well. In particular, this means that the Haslhofer--Kleiner theory is not available to us (indeed, their theory also makes use of additional results which are known only for the mean curvature flow, such as Huisken's monotonicity formula and White's $\varepsilon$-regularity theorem); however, the Stampacchia iteration method is still available. In Section \ref{sec_insc}, we use it to prove a sharp inscribed curvature estimate for flows by concave speeds.

\begin{theorem}
\label{inscribed_improves}
Let $f:\Gamma\to\R_+$ be a concave admissible speed. Let $X:M\times [0,T)\to \mathbb{R}^{n+1}$ be a compact, embedded solution of \eqref{eq:CF} on which $\kappa$ takes values in some open, symmetric cone $\Gamma_0 \Subset \Gamma \cap \Gamma_{m+1}$, where $m\in \{0,\dots,n-1\}$. Then for every $\varepsilon >0$ there is a constant $C_\varepsilon = C_\varepsilon(n,f,\Gamma_0,M_0,\varepsilon)$ such that
\[\overline k - c_m F \leq \varepsilon F + C_\varepsilon\]
on $M \times [0,T)$. 
\end{theorem}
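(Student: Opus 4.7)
The plan is to mimic the Stampacchia iteration used for the mean curvature flow in \cite{Lan16}, with Theorem \ref{cylindrical_ests} supplying exactly the input needed to close the estimate at the sharp threshold $c_m F$. The starting point is the interior non-collapsing estimate of Andrews for flows by concave admissible speeds \cite{ALM13}, which yields a time-independent bound $\overline k \leq C_0 F$ on $M \times [0,T)$, with $C_0$ depending only on $n$, $f$, $\Gamma_0$ and the initial hypersurface. Setting $u := \overline k - c_m F$, this gives the initial a priori bound $u \leq (C_0 - c_m)F$; the goal is to promote it to $u \leq \varepsilon F + C_\varepsilon$ for arbitrarily small $\varepsilon$.

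Following the strategy of Andrews and Brendle, I would first derive a parabolic differential inequality for $u$ by working with the representation $\overline k(x,t) = \sup_{y \in M \setminus \{x\}} k(x,y,t)$ and the evolution of $k$ on the off-diagonal part of $M \times M$ under \eqref{eq:CF}. A computation of the kind performed in \cite{ALM13} should yield, at any spacetime point where the supremum defining $\overline k$ is attained by some $y \neq x$, an inequality of the form
\[
(\partial_t - \mathcal{L}) u \leq c_1 \frac{\langle \nabla F, \nabla u \rangle}{F} + c_2 (\kappa_n - c_m F)F,
\]
with $\mathcal{L} := \dot f^{ij} \nabla_i \nabla_j$; the same inequality persists in the tangential case $\overline k = \kappa_n$ by direct differentiation. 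The crucial structural feature is that the reaction coefficient vanishes on the cylinder $\R^m \times S^{n-m}$ (where $\kappa_n = c_m F$ and $\overline k = c_m F$); this is what forces $(\kappa_n - c_m F)$, rather than $F^2$, to appear on the right, and is precisely what makes Theorem \ref{cylindrical_ests} the right tool to close the estimate.

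With the differential inequality in hand, the remainder is a Stampacchia iteration on the truncated function $u_{k,\sigma} := (u - k F^{1 - \sigma/2})_+$, for $\sigma > 0$ small, $p$ large, and truncation level $k > 0$. Testing the inequality against $p\, u_{k,\sigma}^{p-1} F^{\sigma - 2}$ and integrating, the concavity of $f$ delivers a favourable sign on the leading gradient term through the Poincar\'e-type inequality available for such flows \cite{ALM14}, while the problematic $(\kappa_n - c_m F)F$ term is controlled using Theorem \ref{cylindrical_ests}: applied with an auxiliary parameter $\varepsilon'$ commensurate with $\varepsilon$, it yields an $\varepsilon' F^2$ contribution that can be absorbed into the divergence structure once $\sigma$ is small enough, and a bounded remainder that is absorbed using the initial bound $u \leq C_0 F$. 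The Michael--Simon Sobolev inequality then promotes the resulting $L^p$ estimate to the desired $L^\infty$ bound via the usual De Giorgi iteration on the levels $k$.

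The main obstacle is the first step: obtaining the differential inequality for $u$ with the correct reaction structure. Since $\overline k$ is not smooth in general, the inequality must be interpreted in a viscosity or distributional sense, and the reaction term has to be extracted carefully enough that both its linearity in $(\kappa_n - c_m F)$ and its degeneracy along the cylindrical locus are preserved, so that Theorem \ref{cylindrical_ests} can be fed in without loss of sharpness. Once this is achieved, the remainder is a careful adaptation of the Stampacchia scheme from \cite{ALM14,Lan16}, with constants tracked as in the remark following Theorem \ref{cylindrical_ests}.
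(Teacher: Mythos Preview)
Your proposal has a genuine structural gap at the first step: the differential inequality you posit does not follow from the computation in \cite{ALM13}, and the reaction term you expect is not the one that actually appears. What one gets, on the set $\oversl U=\{\overline k>\kappa_n\}$, is
\[
(\partial_t-\Delta_F)\overline k \leq |A|^2_F\,\overline k - 2\langle \nabla\overline k,\mathcal S(\nabla\overline k)\rangle_F,\qquad \mathcal S:=(\overline k\, I - A)^{-1},
\]
so that $u=\overline k - c_m F$ satisfies $(\partial_t-\Delta_F)u \leq |A|^2_F\, u - 2\langle \nabla\overline k,\mathcal S(\nabla\overline k)\rangle_F$. The reaction term is $|A|^2_F\, u\sim F^2 u$, not $(\kappa_n-c_mF)F$; there is no cancellation that turns it into the quantity controlled by Theorem~\ref{cylindrical_ests}. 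Consequently your proposed absorption mechanism for the reaction does not apply.

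In the paper's argument, Theorem~\ref{cylindrical_ests} enters for a different reason. One defines $G_\sigma:=(G-\varepsilon F)F^{\sigma-1}-K$ with $G=G_1^2/G_2$, $G_1=\max\{\overline k-c_mF,0\}$, $G_2=2\Theta F-H-|A|$, and chooses $K$ via Theorem~\ref{cylindrical_ests} so that $\kappa_n-c_mF\leq\tfrac{\varepsilon}{2}F+K\min\{1,F\}$. This forces $\overline k-\kappa_n\geq\tfrac{\varepsilon}{2}F$ wherever $G_\sigma\geq 0$, which (i) places $\supp G_{\sigma,+}$ inside $\oversl U$, where the barrier inequality is valid, and (ii) gives the uniform bound $|\mathcal S|\leq C(\varepsilon)F^{-1}$. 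The latter is essential: the Poincar\'e-type control of the bad term $\int |A|^2_F G_{\sigma,+}^p$ does not come from Lemma~\ref{lem:Poincare} (the support of $G_{\sigma,+}$ need not avoid $\Cyl$), but from the inscribed-curvature identity \eqref{insc_ident}, and the bound on $\mathcal S$ is precisely what makes that identity usable. Finally, the factor $G_2$ is not cosmetic: it is there to manufacture the favourable $|\nabla A|^2/F^2$ term (via $Q_{G_2,F}$) needed to close the $L^p$ estimate, particularly in the delicate case $m=n-1$. None of these ingredients are captured by your outline.
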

\begin{remark}
The constant $C_\varepsilon$, as for Theorem \ref{cylindrical_ests}, can be written as $C_\varepsilon=\tilde C_\varepsilon(n,f,\Gamma_0,V,\tau,\Lambda,\varepsilon)R^{-1}$, where $\Lambda$ is the initial interior non-collapsing ratio: $\Lambda:=\max_{M\times\{0\}}\overline k/F$.
\end{remark}

We take a similar approach to Brendle \cite{Brend15} but make use of Theorem \ref{cylindrical_ests} in place of the Huisken--Sinestrari convexity estimate (which is not available to us). This is similar to the approach taken in \cite{Lan16}.

In \S\ref{sec_exsc}, we prove a sharp estimate for the exscribed curvature under flows by convex speeds. 

\begin{theorem}
\label{exscribed_improves}
Let $f:\Gamma\to\R_+$ be a convex admissible speed. Let $X:M\times [0,T)\to \mathbb{R}^{n+1}$ be a compact embedded solution of \eqref{eq:CF} on which $\kappa$ takes values in some open, symmetric cone $\Gamma_0 \Subset \Gamma$. Then for every $\varepsilon >0$ there is a constant $C_\varepsilon = C_\varepsilon(n,f,\Gamma_0,M_0,\varepsilon)$ such that
\[\underline k \geq -\varepsilon F - C_\varepsilon\]
on $M \times [0,T)$. 
\end{theorem}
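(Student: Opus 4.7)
The plan is to adapt Brendle's Stampacchia iteration argument from \cite{Brend15} to the fully nonlinear, exscribed setting, using two inputs specific to convex admissible speeds: the exterior non-collapsing estimate of Andrews--Langford--McCoy \cite{ALM13}, which furnishes a constant $\Lambda_0 = \Lambda_0(M_0, f)$ such that $\underline k \geq -\Lambda_0 F$ throughout the flow, and the convexity estimate of \cite{ALM14}, which here will play the role that the Huisken--Sinestrari convexity estimate plays in Brendle's argument. Throughout, we work with the two-point function
\[
k(x,y,t) = \frac{2\langle x-y,\nu(x)\rangle}{|x-y|^{2}}
\]
of \cite{An12,ALM13}, whose infimum over $y\neq x$ recovers $\underline k(x,t)$.

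The key analytic step is a parabolic differential inequality for the function $\varphi := -\underline k - \varepsilon F$, for fixed $\varepsilon>0$. At points where $\underline k$ is realized by a genuine pair, the two-point maximum principle calculation underlying \cite{An12,ALM13}, together with convexity of $f$, yields an inequality of schematic form
\[
(\partial_{t} - \dot f^{ij}\nabla_{i}\nabla_{j})\varphi \leq B\,\varphi_{+} + \mathcal{R},
\]
where $B$ is a bounded curvature coefficient built from $F$, the shape operator $A$, and $\dot f^{ij} = \partial f/\partial h_{ij}$, and the remainder $\mathcal{R}$ is supported on $\{\kappa_{1}<0\}$ and controlled pointwise by the negative part of $\kappa_{1}$. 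At points where $\underline k$ coincides with the smallest principal curvature $\kappa_{1}$, the same inequality follows from a direct pointwise computation, again using convexity of $f$.

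The Stampacchia iteration is then carried out in the standard way. For $p\gg1$ and a small $\sigma>0$, multiplying the evolution inequality by $p\,\varphi_{+}^{p-1} F^{\sigma-1}$, integrating over $M$, and integrating by parts gives
\[
\frac{d}{dt}\int_{M}\varphi_{+}^{p}F^{\sigma-1}\,d\mu + c_{1}p\int_{M}\varphi_{+}^{p-2}|\nabla\varphi_{+}|^{2}F^{\sigma-1}\,d\mu \leq \int_{M}\mathcal{B}\,d\mu,
\]
in which the bad terms $\mathcal{B}$ are supported where $\kappa_{1}<0$. Applying the convexity estimate $\kappa_{1}\geq -\delta F - C_{\delta}$ of \cite{ALM14} with $\delta$ chosen small in terms of $\varepsilon$, $p$, and $\sigma$ confines the support of $\mathcal{B}$ to a region on which $F$ is uniformly bounded, allowing $\mathcal{B}$ to be absorbed after an application of Young's inequality. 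The resulting $L^{p}$ bound combined with the Michael--Simon Sobolev inequality can then be iterated in the standard Stampacchia manner (as in \cite{Brend15,Lan16}) to deliver the desired $L^{\infty}$ bound $\varphi_{+}\leq C_{\varepsilon}$.

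The principal obstacle is the derivation of the correct parabolic inequality for $\underline k$ under a general convex admissible flow, where $\dot f^{ij}$ is no longer the identity and the favorable sign of the reaction coefficient $B$, together with the precise structure of the remainder $\mathcal{R}$, must be extracted carefully from convexity of $f$ at both the principal-direction and genuine-pair points in the two-point framework. Once this inequality is in place, the Stampacchia iteration runs parallel to \cite{Brend15,Lan16}, and the scaling-invariant form of $C_{\varepsilon}$ described in the remarks following Theorem \ref{cylindrical_ests} is obtained by tracking the dependence of each step on the initial data.
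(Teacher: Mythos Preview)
Your proposal identifies the right inputs (exterior non-collapsing from \cite{ALM13}, the convexity estimate from \cite{ALM14}, Stampacchia iteration in the spirit of \cite{Brend15}), but the claimed evolution inequality for $\varphi = -\underline k - \varepsilon F$ does not have the structure you assert, and without additional ingredients the iteration cannot close.

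Specifically, the viscosity inequality \eqref{exsc_evol} gives
\[
(\partial_t-\Delta_F)\varphi \le |A|^2_F\,\varphi - 2\langle \nabla\underline k,\mathcal T(\nabla\underline k)\rangle_F
\]
on $\undersl U$. The zeroth-order coefficient is $|A|^2_F$, which is not bounded as $t\to T$; your claim that ``$B$ is a bounded curvature coefficient'' is therefore incorrect. Likewise, there is no natural remainder $\mathcal R$ supported only on $\{\kappa_1<0\}$: the bad reaction term $|A|^2_F\varphi_+$ is present on all of $\{\varphi>0\}$, not merely where $\kappa_1<0$, so the convexity estimate cannot be used to confine it to a region of bounded $F$ in the way you describe. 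Finally, because $f$ is nonlinear, integrating by parts the diffusion term $p\int G_{\sigma,+}^{p-1}\Delta_F G_\sigma$ produces an error of order $\int G_{\sigma,+}^p\,|\nabla A|^2/F^2$ (coming from $\ddot F$), and you provide no mechanism to absorb it.

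The paper resolves both issues by working not with $-\underline k$ directly but with the quotient $G = G_1^2/G_2$, where $G_1=\max\{-\underline k,0\}$ and $G_2 = 2\Theta F + H - |A|$. The auxiliary denominator $G_2$ is concave in non-radial directions, so its evolution manufactures a good $-\gamma\sigma G_\sigma|\nabla A|^2/F^2$ term (via $Q_{G_2,F}(\nabla A)$ together with the $\sigma|\nabla F|^2_F/F^2$ term from the $F^{\sigma-1}$ weighting). To handle the $|A|^2_F$ reaction, the paper combines \eqref{exsc_evol} with the separate first-order identity \eqref{exsc_identity} for $\partial_t\underline k$ to form the quantity $\omega$; the key observation is that $-\max\{\omega+|A|^2_FG_1,0\}\le -2\sigma(\omega+|A|^2_FG_1)$, which yields an extra \emph{good} term $-3\sigma|A|^2_FG_\sigma$ in the $G_\sigma$ evolution. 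The convexity estimate then enters in the same structural role as the cylindrical estimate does in Section~4: it fixes the shift $K$ in $G_\sigma=(G-\varepsilon F)F^{\sigma-1}-K$ so that $\{G_\sigma>0\}\subset\undersl U$, after which the $K$-dependent error is $\sigma K^p\int|A|^2_F$ rather than a term supported on $\{\kappa_1<0\}$. Your outline is missing both the $G_2$-quotient and the $\omega$ construction, and without them the $L^p$ estimate does not close.
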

\begin{remark}
The constant $C_\varepsilon$, as for Theorems \ref{cylindrical_ests} and \ref{inscribed_improves}, can be written as $C_\varepsilon=\tilde C_\varepsilon(n,f,\Gamma_0,V,\tau,\Upsilon,\varepsilon)R^{-1}$ where $\Upsilon$ is the initial exterior non-collapsing ratio $\Upsilon:=\min_{M\times\{0\}}\underline k/F$.
\end{remark}


\subsection{Ancient solutions} In the final section, we consider solutions defined on the time interval $I = (-\infty, T)$, known as ancient solutions. We will only consider compact solutions so that, without loss of generality, we can assume that $T=1$ is the maximal time. Such solutions can be expected to enjoy rigidity results, since diffusion is allowed an infinite amount of time to act. For mean curvature flow, Daskolopuolos--Hamilton--\v Se\v sum \cite{DaHaSe}, Huisken--Sinestrari \cite{HuSi15}, Haslhofer--Hershkovits \cite{HaHe} and (for flows in the sphere) Bryan--Louie \cite{BrLo} and Bryan--Ivaki--Scheuer \cite{BrIvSc} have independently identified various natural geometric conditions under which a convex ancient solution must be a family of shrinking spheres. Some of these results were extended to mean convex ancient solutions in \cite{Lan16}. Motivated by the ideas of Huisken and Sinestrari, we will prove rigidity results for ancient solutions of various fully nonlinear flows. The most important of these is the following:

\begin{theorem}\label{thm:sphere_char}
Let $f:\Gamma_+\to\R$ be an admissible speed. If $f$ is convex or concave, or if $n=2$, then any compact, connected ancient solution $X:M \times (-\infty, 1) \to \mathbb{R}^{n+1}$ of \eqref{eq:CF} which is uniformly convex, in the sense that
\[
\kappa(M\times(-\infty,0])\subset\Gamma_0\Subset \Gamma_+\,,
\]
is a shrinking sphere.
\end{theorem}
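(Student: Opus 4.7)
I would combine the sharp pinching estimates of this paper (together with the convex-speed and $n=2$ umbilic estimates from \cite{L} and \cite{ALM15}) with the infinite backwards life of an ancient solution, exploiting the freedom to take the ``initial time'' arbitrarily far in the past in order to drive the additive error constant $C_\varepsilon$ to zero.

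\textbf{Step 1: size control from sphere comparison and uniform convexity.} First I would observe that, in each of the three cases (concave, convex, $n=2$), the pinching cone $\Gamma_0$ is preserved forwards in time, so $\kappa(M \times (-\infty, 1)) \subset \Gamma_0$ throughout; in particular $\kappa_1 \geq \delta \kappa_n$ for some $\delta > 0$. Writing $c_0^{-1} := f(1,\dots,1)$, comparison against a round shrinking sphere forces the circumradius to satisfy $\rho(t_0) \geq \sqrt{2(1-t_0)/c_0}$. Combining Blaschke's rolling theorem with the uniform pinching $\kappa_1 \geq \delta \kappa_n$, the inradius is comparable to the circumradius, so
\[
F_{\max}(t_0) \leq C(1-t_0)^{-1/2}, \qquad \mu(t_0) \leq C(1-t_0)^{n/2},
\]
with $C = C(n, \Gamma_0, f)$.

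\textbf{Step 2: apply the sharp estimate with vanishing error.} In the concave case, I would apply Theorem \ref{cylindrical_ests} with $m=0$ on $M \times [t_0, 1)$ (valid since $\Gamma_0 \Subset \Gamma_+ = \Gamma_1$) to obtain
\[
\kappa_n - c_0 F \leq \varepsilon F + C_\varepsilon(t_0).
\]
By the remark following that theorem, $C_\varepsilon(t_0) = \tilde C_\varepsilon R^{-1}$ with $R = 1/F_{\max}(t_0)$, and the scale-invariant $\tilde C_\varepsilon$ depends only on $n, f, \Gamma_0, \varepsilon$ and on the normalised parameters $V = \mu(t_0) F_{\max}(t_0)^n$ and $\tau = (1-t_0) F_{\max}(t_0)^2$. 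Step 1 bounds $V$ and $\tau$ uniformly in $t_0$, so $\tilde C_\varepsilon$ is uniformly bounded and $C_\varepsilon(t_0) \leq C_\varepsilon' F_{\max}(t_0) \to 0$ as $t_0 \to -\infty$. For $f$ convex I would use the umbilic estimate for convex speeds of \cite{L}, and for $n=2$ the analogue from \cite{ALM15}; in both cases the estimate is $|\AA|^2 \leq \varepsilon F^2 + C_\varepsilon$ with the same scaling, and the analogous argument gives $C_\varepsilon(t_0) \to 0$.

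\textbf{Step 3: pointwise umbilicity and extraction of the sphere.} Fixing $(x,t) \in M \times (-\infty,1)$ and sending $t_0 \to -\infty$ in the concave case yields $\kappa_n(x,t) \leq (c_0 + \varepsilon) F(x,t)$ for every $\varepsilon > 0$, hence $\kappa_n \leq c_0 F$ pointwise. On the other hand, monotonicity and $1$-homogeneity of $f$ always give
\[
F = f(\kappa_1, \dots, \kappa_n) \leq f(\kappa_n, \dots, \kappa_n) = \kappa_n/c_0,
\]
i.e.\ $\kappa_n \geq c_0 F$ always. Equality $\kappa_n = c_0 F$ together with strict monotonicity of $f$ in each argument (ellipticity on $\Gamma_0$) forces $\kappa_1 = \dots = \kappa_n$. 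In the convex and $n=2$ cases, the limit $|\AA|^2 \leq 0$ gives umbilicity directly. Since $M$ is compact and connected and totally umbilic at every time, each slice $M_t$ is a round sphere; writing $X = c(t) + r(t)\omega$ and comparing with \eqref{eq:CF} forces $\dot c \equiv 0$ and $r\dot r = -1/c_0$, identifying $X$ as the shrinking sphere extinguishing at time $1$.

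\textbf{Main obstacle.} The delicate step is Step 1: the bounds $F_{\max}(t_0) \leq C(1-t_0)^{-1/2}$ and $\mu(t_0) \leq C(1-t_0)^{n/2}$ both rest on the comparability of the inradius and circumradius under a uniform pinching $\kappa_1 \geq \delta \kappa_n$. This comparability is exactly what keeps the scale-invariant parameters $V$ and $\tau$ bounded as $t_0 \to -\infty$, and is precisely the point where the uniformly convex hypothesis is essential---without it the error $C_\varepsilon(t_0)$ need not decay.
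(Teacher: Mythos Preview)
Your strategy---apply the pinching estimate on $[t_0,1)$ and send $t_0\to-\infty$ to kill $C_\varepsilon$---is natural, but Step~1 does not deliver what Step~2 needs. Even granting that pointwise pinching $\kappa_1\ge\delta\kappa_n$ gives bounded eccentricity $\rho_+/\rho_-\le C$ (a non-trivial convex-geometry fact you invoke without proof), this only yields $\rho_+(t_0)\sim\rho_-(t_0)\sim\sqrt{1-t_0}$, and hence the area bound $\mu(t_0)\le C(1-t_0)^{n/2}$. It does \emph{not} give $F_{\max}(t_0)\le C(1-t_0)^{-1/2}$: knowing the inradius and circumradius of a convex body does not bound its maximal curvature. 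What you would actually need is a global curvature-ratio bound $\kappa_{n,\max}/\kappa_{n,\min}\le C(\delta,n)$ for pinched convex hypersurfaces, which is a separate and substantially harder statement than bounded eccentricity (and is essentially the ``bounded speed ratios'' hypothesis of Theorem~\ref{thm:sphere_char_typeI}, not a consequence of uniform convexity alone). Without it, you cannot bound the scale-invariant parameters $V=\mu(t_0)F_{\max}(t_0)^n$ and $\tau=(1-t_0)F_{\max}(t_0)^2$ uniformly in $t_0$, and you cannot force $C_\varepsilon(t_0)\to 0$.

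The paper sidesteps this entirely. Rather than seeking a pointwise bound on $F$, it uses only the integral bound $\int F^n\,d\mu_t\le C$, which follows immediately from Gauss--Bonnet $\int\mathfrak K=c(n)$ and the pinching $F^n\le C(n,f,\Gamma_0)\mathfrak K$. This yields the volume decay \eqref{eq:volume_decay} via Lemma~\ref{lem:area_conds}. The paper then does \emph{not} invoke the finished $L^\infty$ estimate of Theorem~\ref{cylindrical_ests}; instead it goes back inside that proof and uses the raw $L^p$ evolution inequality \eqref{cyl_Lp_ancient}, namely $\frac{d}{dt}\int G_{\sigma,+}^p\le -\int|A|^2G_{\sigma,+}^p$, together with H\"older and the volume decay to run an ODE comparison for $\varphi=\int G_{\sigma,+}^p$ backward in time, forcing $\varphi\equiv 0$. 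The point is that integral control suffices---no type-I decay is needed---and this is exactly what the uniform-convexity hypothesis provides.
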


If the flow admits a splitting theorem, we are able to deduce, by a blow-down argument, that convex ancient solutions with type-I curvature decay are uniformly convex, and hence shrinking spheres by Theorem \ref{thm:sphere_char} (cf. \cite{HuSi15}).

\begin{theorem}\label{thm:sphere_char_typeI}
Let $f:\Gamma\to\R$, where $\Gamma_+\subset\Gamma$, be an admissible speed satisfying one of the following conditions:
\begin{itemize}
\item $n=2$,
\item $f|_{\Gamma_+}$ is convex, or
\item $f|_{\Gamma_+}$ is concave and inverse-concave
\end{itemize}
and let $X:M\times(-\infty,1)\to\R^{n+1}$ be a compact, connected ancient solution of \eqref{eq:CF} satisfying $\kappa(M\times(-\infty,0])\subset \Gamma_+\cap\Gamma_0$, where $\Gamma_0\Subset\Gamma$. Suppose that $X$ satisfies 
one of the following conditions:
\begin{enumerate}
\item\label{hyp:typeI} Type-I curvature decay:
\[
\limsup_{t\to-\infty}\sqrt{1-t}\max_{M\times\{t\}}F<\infty\,.
\]
\item\label{hyp:bsr} Bounded curvature ratios:
\[
\limsup_{t\to-\infty}\frac{\max_{M\times\{t\}}F}{\min_{M\times\{t\}}F}<\infty\,.
\]
\end{enumerate}
Then $\{M_t\}_{t\in(-\infty,1)}$ is a shrinking sphere.
\end{theorem}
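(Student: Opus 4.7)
The strategy is to reduce to Theorem~\ref{thm:sphere_char}. That theorem requires strict uniform convexity $\kappa\in\Gamma_0'\Subset\Gamma_+$, which is stronger than the hypothesis $\kappa\in\Gamma_+\cap\Gamma_0$ with $\Gamma_0\Subset\Gamma$ that we have here, since $\overline{\Gamma_0}\cap S^n$ may a priori touch $\partial\Gamma_+$. The plan is to use either of the dynamic hypotheses~(1) or~(2), together with a blow-down argument in the spirit of \cite{HuSi15}, to upgrade to the strict form, after which Theorem~\ref{thm:sphere_char} applies directly. The three classes of speeds listed are precisely those for which $\Gamma_+$ is preserved and a Hamilton-type strong maximum principle/splitting theorem for $\kappa_1$ is available on $\Gamma_+$, which is what drives the upgrade.

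Arguing by contradiction, assume $\inf_{M\times(-\infty,0]}\kappa_1/F=0$, and pick $(x_k,t_k)$ with $\kappa_1(x_k,t_k)/F(x_k,t_k)\to 0$; short-time smoothness forces $t_k\to-\infty$. Rescale parabolically about $(x_k,t_k)$ by a scale $\lambda_k$: in case~(1) take $\lambda_k=(1-t_k)^{-1/2}$, so that the type-I bound yields $\tilde F\leq C$ on the past, while the circumscribed sphere comparison $\max_{M\times\{t\}}F\cdot\sqrt{1-t}\geq c>0$ (valid for any compact convex solution of~\eqref{eq:CF}) prevents the curvature scale from degenerating; in case~(2) take $\lambda_k=\max_{M\times\{t_k\}}F$, so that bounded ratios directly give $\tilde F\sim 1$ on the initial slice. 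In either case $\Gamma_0\Subset\Gamma$ converts the $\tilde F$ bounds into bounds on all rescaled principal curvatures, and a further sphere comparison bounds the rescaled diameter above. A Hamilton-type point-picking argument then ensures that the distinguished sequence tracks a point at which $\tilde F$ stays bounded away from zero in the limit. A pointed, smooth compactness theorem for convex hypersurfaces yields a subsequential limit $\tilde X_\infty$: a compact, convex ancient solution of \eqref{eq:CF} on which $\kappa_1$ attains the value~$0$ at an interior space-time point.

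The strong maximum principle for $\kappa_1$, available for each of the three speed classes (via \cite{An07} in the concave, inverse-concave case; via the arguments of \cite{ALM14} in the convex case; and via \cite{ALM15} when $n=2$), then forces $\tilde X_\infty$ to split isometrically as $\R\times\Sigma^{n-1}$, which contradicts compactness of the limit. Thus $\kappa_1/F$ is bounded below by a positive constant on $M\times(-\infty,0]$, so $\kappa$ takes values in some $\Gamma_0'\Subset\Gamma_+$, and Theorem~\ref{thm:sphere_char} completes the proof.

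The main obstacle is organising the blow-down so that the limit is simultaneously smooth, compact, a solution of the same flow, and captures the degeneracy $\kappa_1=0$ at a point of non-degenerate $\tilde F$: balancing these demands requires the calibrated choice of $\lambda_k$, the sphere-comparison estimates in both directions, and the Hamilton-type point-picking noted above. Once this balancing act is set up correctly, the splitting theorem provides the clean contradiction that unlocks the reduction to Theorem~\ref{thm:sphere_char}.
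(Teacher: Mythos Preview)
Your proposal is correct and follows essentially the same blow-down/splitting strategy as the paper. Two minor simplifications in the paper's version: it first observes that condition~(\ref{hyp:bsr}) implies condition~(\ref{hyp:typeI}) via an ODE comparison, so only the type-I rescaling $\lambda_k=(1-t_k)^{-1/2}$ is needed; and no point-picking is required, since integrating the type-I bound gives a uniform rescaled diameter bound and the hypothesis $\kappa\in\Gamma_0\Subset\Gamma$ already forces $F>0$ on the (compact) limit.
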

\begin{remark}\mbox{}
\begin{itemize}
\item[--] The technical condition
\[
\kappa(M\times(-\infty,0])\subset \Gamma_+\cap\Gamma_0 \quad \text{for some} \quad \Gamma_0\Subset\Gamma
\]
ensures uniform ellipticity of the flow but not, a priori, uniform convexity. It is superfluous (in that it can be replaced by the weaker condition $\kappa_1>0$) if $\Gamma_+\Subset\Gamma$.
\item[--] In case $f$ is concave, inverse-concavity is actually only required on the faces of $\partial\Gamma_+$, which is is implied by inverse-concavity on $\Gamma_+$ \cite[Remark 1 (6)]{AMY}.
\end{itemize}
\end{remark}

If the flow \eqref{eq:CF} admits a differential Harnack inequality, we are able say even more (cf. \cite{HuSi15}).
\begin{theorem}\label{thm:sphere_char_Harnack}
Let $f:\Gamma\to\R$, where $\Gamma_+\subset\Gamma$, be an admissible speed satisfying one of the following conditions:
\begin{itemize}
\item $n=2$ and $f|_{\Gamma_+}$ is inverse-concave,
\item $f|_{\Gamma_+}$ is convex, or
\item $f|_{\Gamma_+}$ is concave and inverse-concave
\end{itemize}
and let $X:M\times(-\infty,1)\to\R^{n+1}$ be a compact, connected ancient solution of \eqref{eq:CF} satisfying $\kappa(M\times(-\infty,0])\subset \Gamma_+\cap\Gamma_0$, where $\Gamma_0\Subset\Gamma$. Suppose that $X$ satisfies one of the following conditions:
\begin{enumerate}\setcounter{enumi}{2}
\item\label{hyp:ecc} Bounded eccentricity: 
\[
\limsup_{t\to-\infty}\frac{\rho_+(t)}{\rho_-(t)}<\infty\,,
\]
where $\rho_+(t)$ and $\rho_-(t)$ denote, respectively, the circum- and in-radii of $M_t$.
\item\label{hyp:brd} Bounded rescaled diameter:
\[
\limsup_{t\to-\infty}\frac{\diam(M_t)}{\sqrt{1-t}}<\infty\,.
\]
\item\label{hyp:rii} Bounded isoperimetric ratio:
\[
\limsup_{t\to-\infty}\frac{\mu_t(M)^{n+1}}{|\Omega_t|^n}<\infty\,,
\]
where $\Omega_t$ is the region bounded by $M_t$.
\end{enumerate}
Then $\{M_t\}_{t\in(-\infty,1)}$ is a shrinking sphere.
\end{theorem}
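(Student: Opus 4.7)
The plan is to reduce each of hypotheses (\ref{hyp:ecc}), (\ref{hyp:brd}), (\ref{hyp:rii}) to bounded curvature ratios---condition (\ref{hyp:bsr}) of Theorem \ref{thm:sphere_char_typeI}---and then invoke that theorem.

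First I would record the shrinking-sphere envelope. Because $f$ is $1$-homogeneous, shrinking spheres are exact solutions of \eqref{eq:CF}, so the avoidance principle applied to the in-ball and circum-ball yields
\[
\rho_-(t)^2\leq 2c_0(1-t)\leq\rho_+(t)^2,\qquad c_0:=f(1,\ldots,1).
\]
Reducing the three hypotheses to a single one now runs as follows. The implication (\ref{hyp:ecc}) $\Rightarrow$ (\ref{hyp:brd}) is immediate: if $\rho_+\leq\Lambda\rho_-$ then $\diam(M_t)\leq 2\rho_+\leq 2\Lambda\sqrt{2c_0(1-t)}$. The implication (\ref{hyp:rii}) $\Rightarrow$ (\ref{hyp:ecc}) follows from a Bonnesen-type estimate for convex bodies asserting that the scale-invariant ratio $\mu^{n+1}/|\Omega|^n$ controls a power of $\rho_+/\rho_-$; this can be obtained by combining the elementary bounds $|\Omega|\geq\omega_{n+1}\rho_-^{n+1}$ and $\mu\leq(n+1)\omega_{n+1}\rho_+^n$ (coming from inclusion in and of the in- and circum-balls) with a Fuglede-type quantitative isoperimetric inequality.

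The main step is (\ref{hyp:brd}) $\Rightarrow$ bounded curvature ratios, and here I would invoke Andrews' differential Harnack inequality, which is available precisely under each of the three alternatives on $f$ in the statement. On an ancient solution, its integrated form reads
\[
F(x_1,t_1)\leq F(x_2,t_2)\exp\!\left(C\int_{t_1}^{t_2}\frac{|\dot\gamma|^2}{F}\,d\tau\right)
\]
for any spacetime path $\gamma$ joining $(x_1,t_1)$ to $(x_2,t_2)$ with $t_1<t_2$, and implies in particular the pointwise monotonicity $\partial_tF\geq 0$. By convexity and inward motion, $M_\tau\subset\Omega_{t_1}$ for all $\tau\in[t_1,t_2]$, so any two such spacetime points can be joined by a path of spatial length at most $\diam(M_{t_1})$. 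Taking $t_2-t_1\sim(1-t_1)$ and invoking (\ref{hyp:brd}) bounds the exponent by a universal constant, yielding $F_{\max}(t_1)\leq CF_{\min}(t_2)$. Combining this with the pointwise monotonicity and bootstrapping in the natural self-similar parameter $s=-\log(1-t)$ extracts the same-time bound $F_{\max}(t)/F_{\min}(t)\leq C$, after which Theorem \ref{thm:sphere_char_typeI}(\ref{hyp:bsr}) closes the argument.

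The hard part will be this last reduction. The Harnack compares $F$ at different times, whereas bounded curvature ratios is a same-time statement; bridging the two requires the bootstrap in $s=-\log(1-t)$ and uses crucially the self-similar structure of the ancient solution, in the style of the Huisken--Sinestrari analysis of convex ancient solutions of mean curvature flow cited in the theorem. If the direct Harnack bootstrap proves delicate, a backup route---viable once (\ref{hyp:ecc}) has been secured, and hence in cases (\ref{hyp:ecc}) and (\ref{hyp:rii})---is a blow-down argument: the rescaled hypersurfaces $M_t/\sqrt{1-t}$ form a precompact family of convex bodies (using the inradius lower bound from bounded eccentricity), any subsequential limit as $t\to-\infty$ is a convex self-similar shrinker, and rigidity of convex shrinkers for these flows forces it to be a sphere.
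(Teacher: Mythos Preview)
Your reduction $(\ref{hyp:rii})\Rightarrow(\ref{hyp:ecc})\Rightarrow(\ref{hyp:brd})$ and your invocation of Andrews' Harnack inequality are exactly right, and indeed you correctly arrive at
\[
\max_{M\times\{t\}}F\leq C\min_{M\times\{s\}}F\qquad\text{with }s=\tfrac{1+t}{2}\,.
\]
But you then turn the wrong corner. You try to upgrade this to the \emph{same-time} bound $F_{\max}(t)/F_{\min}(t)\leq C$ (condition (\ref{hyp:bsr})), call this the hard part, and propose a bootstrap in $s=-\log(1-t)$ or a blow-down to self-shrinkers as backup. None of this is needed. Theorem \ref{thm:sphere_char_typeI} has \emph{two} hypotheses, and the other one --- type-I decay --- is the one that drops out here for free: by the shrinking-sphere comparison you already recorded, $\min_{M\times\{s\}}F\leq C(1-s)^{-1/2}$, and since $1-s=\tfrac12(1-t)$ this gives
\[
\max_{M\times\{t\}}F\leq C'(1-t)^{-1/2}\,,
\]
which is precisely condition (\ref{hyp:typeI}). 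The paper's proof does exactly this and then calls Theorem \ref{thm:sphere_char_typeI}; there is no bootstrap and no self-shrinker classification.

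So your proposal is not wrong, but it manufactures a difficulty that isn't there. The bootstrap you describe would work (it is essentially what Huisken--Sinestrari do to get bounded curvature ratios), but it is strictly more effort than required. Your blow-down backup, on the other hand, would rely on a classification of convex self-similar shrinkers for fully nonlinear flows that is not established in the paper and is not cited as available.
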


For mean curvature flow, it is also possible to obtain a convexity estimate for ancient solutions \cite{Lan16}: any closed, mean-convex ancient solution satisfying a uniform scaling invariant lower curvature bound
\begin{equation*}
\liminf_{t\to-\infty}\min_{M\times\{t\}}\frac{\kappa_1}{H}>-\infty
\end{equation*}
as well as the volume-decay condition
\begin{equation*}
\int_t^0\hspace{-3mm}\int H \leq C (1-t)^\frac{n+1}{2}\quad \text{for all}\quad t<0
\end{equation*}
must be convex. We note that both of these conditions are automatic for ancient solutions satisfying type-I curvature decay. In the present work, we replace the volume decay condition with the related condition
\begin{equation}\label{eq:volume_decay}
\int_t^0\hspace{-3mm}\int F \leq C (1-t)^{r}\quad \text{for all}\quad t<0
\end{equation}
for some $r \ge \frac{n+1}{2}$.

\begin{theorem}\label{thm:anc_cnvx}
Let $f:\Gamma\to\R_+$ be an admissible speed satisfying one of the following conditions:
\begin{itemize}
\item $n=2$ or 
\item $f$ is convex\,.
\end{itemize}
Then any ancient solution $X:M \times (-\infty, 1) \to \mathbb{R}^{n+1}$ of \eqref{eq:CF} which satisfies $\kappa(M\times(-\infty,0])\subset\Gamma_0\Subset \Gamma$ and the volume decay condition \eqref{eq:volume_decay} is convex.
\end{theorem}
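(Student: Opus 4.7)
The plan is to adapt Langford's ancient convexity argument for mean curvature flow \cite{Lan16} to the fully nonlinear setting, using in place of Huisken--Sinestrari's convexity estimate the sharp convexity estimate available for convex $f$ (\cite{ALM14}) or for $n=2$ (\cite{ALM15}). The volume decay hypothesis \eqref{eq:volume_decay} plays the role taken by the $H$-integral condition in \cite{Lan16}.

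For each $t_0 < 0$, the restricted solution $X:M\times[t_0,1)\to\R^{n+1}$ is compact with $\kappa\in\Gamma_0\Subset\Gamma$, so the sharp convexity estimate yields, for every $\varepsilon>0$, a constant $C_\varepsilon(t_0)$ with
\[
\kappa_1 \ge -\varepsilon F - C_\varepsilon(t_0)\quad\text{on}\quad M\times[t_0,1).
\]
By the scaling analysis recorded in the Remark after Theorem \ref{cylindrical_ests}, one has $C_\varepsilon(t_0) = \tilde C_\varepsilon R(t_0)^{-1}$ with $\tilde C_\varepsilon = \tilde C_\varepsilon(n,f,\Gamma_0,V,\tau,\varepsilon)$ scale-invariant, where the scale $R(t_0)$ need only dominate $\max_{M\times\{t_0\}}F$, $(V^{-1}\mu_{t_0}(M))^{1/n}$ and $(\tau^{-1}(1-t_0))^{1/2}$. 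I choose $R(t_0)=(1-t_0)^{1/2}$, which automatically handles the time condition. It then suffices to find a sequence $t_k\to -\infty$ along which $\max_{M\times\{t_k\}}F \lesssim (1-t_k)^{-1/2}$ and $\mu_{t_k}(M)\lesssim (1-t_k)^{n/2}$ hold with uniform constants.

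The volume decay condition \eqref{eq:volume_decay}, combined with $\partial_t|\Omega_t| = -\int_{M_t}F\,d\mu_t$, produces the enclosed-volume bound $|\Omega_t|\le|\Omega_0|+C(1-t)^r$. A pigeonhole argument on dyadic subintervals of $(-\infty,0]$ extracts times $t_k$ at which $\int_{M_{t_k}}F\,d\mu\le C(1-t_k)^{r-1}$; combined with a parabolic smoothing estimate for $F$ (available because $F$ satisfies a strictly parabolic equation in the convex or two-dimensional case), this yields the required pointwise speed bound at $t_k$. The companion area bound follows from the enclosed-volume control, and the threshold $r\ge (n+1)/2$ is precisely what makes these bounds compatible with the parabolic scaling $R\sim(1-t)^{1/2}$---indeed, it is exactly the exponent that forces $\mu_{t_k}(M)\lesssim(1-t_k)^{n/2}$.

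With $R(t_k)\to\infty$ and $\tilde C_\varepsilon$ uniform in $k$, we have $C_\varepsilon(t_k)\to 0$. Fixing any $(x,t)\in M\times(-\infty,1)$ and applying the estimate on $[t_k,1)$ with $t_k<t$ gives $\kappa_1(x,t)\ge -\varepsilon F(x,t)$; sending $\varepsilon\to 0$ then yields $\kappa_1\ge 0$. The main obstacle is the middle step: extracting scale-invariant pointwise speed and area bounds at the pigeonholed times $t_k$ from the integral hypothesis \eqref{eq:volume_decay}. This is where the parabolic structure of the evolution of $F$---and hence the convex or two-dimensional hypothesis---is essential.
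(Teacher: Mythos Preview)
Your approach is genuinely different from the paper's, and the step you flag as ``the main obstacle'' is a real gap that I do not see how to close along the lines you sketch.

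The paper does \emph{not} argue by showing that the constant $C_\varepsilon(t_0)$ in the convexity estimate decays to zero as $t_0\to-\infty$. Instead it works directly with the $L^p$ evolution inequality for the pinching function $G_\sigma$ (from \cite{ALM14} or \cite{ALM15}) that underlies the Stampacchia iteration: for suitable $p$ and $\sigma$,
\[
\frac{d}{dt}\int G_{\sigma,+}^p \le -\int |A|^2 G_{\sigma,+}^p.
\]
Combining this with H\"older's inequality and $G_\sigma\le CF^\sigma$ gives a differential inequality of the form
\[
\frac{d}{dt}\varphi \le -C\left(\int F\right)^{-\beta}\varphi^{1+\beta},\qquad \varphi:=\int G_{\sigma,+}^p,\ \beta=\tfrac{2}{\sigma p-1}>0.
\]
Integrating this ODE backwards in time and invoking \eqref{eq:volume_decay} (via Jensen) yields $\varphi^{-\beta}(s)\gtrsim (s-t)^{1+\beta}(1-t)^{-\beta r}$, which blows up as $t\to-\infty$ once $p$ is large enough that $1+\beta(1-r)>0$. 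Hence $\varphi\equiv0$, so $G\le\varepsilon F$, and since $\varepsilon$ was arbitrary, $\kappa_1\ge0$. No pointwise speed bound, no area bound, and no embeddedness is needed.

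As for your plan: the two ingredients you need at the pigeonholed times are both problematic. First, passing from $\int_{M_{t_k}}F\lesssim(1-t_k)^{r-1}$ to $\max_{M_{t_k}}F\lesssim(1-t_k)^{-1/2}$ via ``parabolic smoothing'' is not justified: the evolution $(\partial_t-\Delta_F)F=|A|^2_F F$ carries a reaction term comparable to $F^3$, so there is no off-the-shelf $L^1\to L^\infty$ smoothing without already controlling $|A|$ (which is circular). Second, deducing $\mu_{t_k}(M)\lesssim(1-t_k)^{n/2}$ from enclosed-volume control fails for non-convex hypersurfaces (area need not be dominated by any power of enclosed volume), and in any case the theorem does not assume embeddedness, so $|\Omega_t|$ may not even be defined. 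The paper's integral/ODE argument sidesteps all of this.
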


Similarly as in \cite{Lan16}, the convexity estimate can be improved to a sharp estimate for the exscribed curvature in case the solution is exterior non-collapsing (see Theorem \ref{anc_extNC}).

We will also obtain optimal curvature pinching for $(m+1)$-convex, $m\in\{1,\dots,n-1\}$, ancient solutions of flows by concave or convex speeds and optimal inscribed curvature pinching for interior non-collapsing ancient solutions of flows by concave speeds (Proposition \ref{prop:m_convex_anc}). These will be used to prove further characterisations of the shrinking sphere for flows by concave speeds. The first weakens the convexity assumption $\Gamma=\Gamma_+$ of Theorem \ref{thm:sphere_char_typeI} to two-convexity:

\begin{theorem}\label{thm:2conv_sphere}
Let $f:\Gamma\to\R$ be a concave admissible speed such that $\Gamma_+\subset\Gamma\subset\Gamma_2$ and $f|_{\Gamma_+}$ is inverse-concave and let $X : M\times (-\infty, 1)\to \mathbb{R}^{n+1}$ be a compact, connected ancient solution of \eqref{eq:CF} satisfying $\kappa(M\times(-\infty,0])\subset\Gamma_0\Subset\Gamma$ and one of the conditions (\ref{hyp:typeI})--(\ref{hyp:bsr}) of Theorem \ref{thm:sphere_char_typeI}. Then $\{M_t\}_{t\in(-\infty,1)}$ is a shrinking sphere. 
\end{theorem}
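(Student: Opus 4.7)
The plan is to reduce the result to Theorem \ref{thm:sphere_char_typeI} by showing that the ancient solution is in fact uniformly convex, i.e.\ $\kappa(M\times(-\infty,0])\subset\Gamma_+\cap\Gamma_0'$ for some $\Gamma_0'\Subset\Gamma$. Once this is established, the hypotheses of Theorem \ref{thm:sphere_char_typeI} are met (since $\Gamma_+\subset\Gamma$, $f|_{\Gamma_+}$ is concave and inverse-concave, and either (\ref{hyp:typeI}) or (\ref{hyp:bsr}) is assumed), and its conclusion delivers the shrinking sphere. Thus the only real work is in upgrading the a priori two-convexity (implied by $\Gamma\subset\Gamma_2$) to uniform convexity.

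The first step is to invoke Proposition \ref{prop:m_convex_anc} with $m=1$. Since $\Gamma\subset\Gamma_2$, the solution is two-convex; since $f$ is concave, the flow is interior non-collapsing; and since $\kappa\in\Gamma_0\Subset\Gamma$, the ellipticity and regularity required by the Stampacchia iteration and the Huisken--Sinestrari rescaling argument are in place. The proposition then yields the sharp pointwise bounds
\[
\kappa_n \leq c_1 F \qquad \text{and} \qquad \overline k \leq c_1 F
\]
on all of $M\times(-\infty,0]$. These replace the sharp convexity estimate which played the analogous role in the proof of Theorem \ref{thm:sphere_char_typeI}.

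Next I would perform a blow-down: under (\ref{hyp:typeI}) rescale parabolically by $(1-t)^{-1/2}$ about suitable centres, and under (\ref{hyp:bsr}) rescale by $\max_{M_t}F$. In either case the rescaled curvatures stay bounded and bounded away from $\partial\Gamma$, so Krylov--Safonov regularity (applicable since $f$ is concave) together with standard compactness extracts a smooth subsequential limit as $t\to-\infty$. This limit is a nontrivial ancient solution of the (rescaled) flow whose curvatures still obey both sharp bounds, still lie in $\overline\Gamma_0$, and are still two-convex. A tensorial strong maximum principle applied to the quantity $c_1F-\kappa_n$ --- in the spirit of the cylindrical rigidity arguments of Huisken--Sinestrari and their extension in \cite{HuBre} --- then forces the dichotomy: either $c_1F-\kappa_n>0$ strictly on the limit, in which case the limit is uniformly convex; or equality is attained and the limit splits isometrically as $\R\times N$, with $N$ umbilic by the saturation, so the limit is a round shrinking cylinder $\R\times S^{n-1}$. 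Hypothesis (\ref{hyp:typeI}) bounds the rescaled diameter (using the isoperimetric inequality and type-I decay) and hypothesis (\ref{hyp:bsr}) bounds $\kappa_n/\kappa_1$, each of which excludes the cylinder. Hence the blow-down limit is compact and uniformly convex.

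Uniform convexity of the blow-down propagates back to show that $\kappa(M\times(-\infty,t_0])\subset\Gamma_0'\Subset\Gamma_+$ for some sufficiently negative $t_0$; combined with the a priori confinement to $\Gamma_0$ on the compact interval $[t_0,0]$, this gives $\kappa(M\times(-\infty,0])\subset\Gamma_+\cap\Gamma_0''$ for some $\Gamma_0''\Subset\Gamma$. Theorem \ref{thm:sphere_char_typeI} now applies and yields that $\{M_t\}$ is a shrinking sphere. The main obstacle I anticipate is the tensorial strong maximum principle / splitting theorem for the pinching $c_1F-\kappa_n$ in the general concave, one-homogeneous setting, which is what allows one to identify the saturating case as a cylinder; the remaining steps are by now fairly standard applications of the compactness and soliton-classification machinery.
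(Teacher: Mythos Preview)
Your overall strategy --- upgrade to convexity and then invoke Theorem~\ref{thm:sphere_char_typeI} --- is the same as the paper's, but the step where you obtain convexity has a genuine gap, and the paper's argument is both simpler and structurally different.

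The gap is your dichotomy ``either $c_1F-\kappa_n>0$ strictly, or the limit splits as a cylinder''. As you yourself flag, this requires a strong maximum principle/splitting statement for $c_1F-\kappa_n$, and for general concave $f$ no such statement is available: the paper's own Proposition~\ref{prop:strict_m_convex_anc} carries out precisely this kind of argument, but only under the additional hypothesis that $f$ is \emph{strictly} concave in non-radial directions, which is not assumed here. Note also that you never use the inverse-concavity of $f|_{\Gamma_+}$; since this is a hypothesis of the theorem, that is a strong hint that your route is missing something. (Two smaller issues: you invoke Proposition~\ref{prop:m_convex_anc} without first establishing the volume decay \eqref{eq:volume_decay}, and you appeal to interior non-collapsing and the bound $\overline k\le c_1F$, both of which require embeddedness, which is not assumed and is in any case unnecessary.)

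The paper proceeds more directly. First, the hypotheses (\ref{hyp:typeI})--(\ref{hyp:bsr}) together with $\kappa\in\Gamma_0\Subset\Gamma$ give the volume decay \eqref{eq:volume_decay} via Lemma~\ref{lem:area_conds}~(vii), so Proposition~\ref{prop:m_convex_anc} yields $\kappa_n\le c_1F$. The key observation you are missing is that, for \emph{concave} one-homogeneous $f$, this inequality already forces $\kappa_1\ge 0$ pointwise: by concavity and homogeneity, $F\le\dot f^i(w)\kappa_i$ for $w=(0,1,\dots,1)$, and combining this with $\kappa_n\le c_1F$ and $c_1^{-1}=f(w)=(n-1)\dot f^n(w)$ gives $0\le \dot f^1(w)\kappa_1$. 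No blow-down or rigidity is needed. Strict convexity then comes from the splitting theorem \cite[Theorem~5.1]{BoLa16} applied to $\kappa_1$; this is exactly where inverse-concavity of $f|_{\Gamma_+}$ (more precisely, of its restriction to the face $\{z_1=0\}$, cf.\ \cite[Remark~1(6)]{AMY}) enters. With convexity in hand, Theorem~\ref{thm:sphere_char_typeI} applies and finishes the proof.
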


For speeds which are strictly concave in non-radial directions, we can say more:
\begin{theorem}\label{thm:strict_conv_sphere}
Let $f:\Gamma\to\R$ be an admissible speed such that $\Gamma_+\subset\Gamma$ and $f$ is strictly concave in non-radial directions and let $X:M\times (-\infty, 1)\to \mathbb{R}^{n+1}$ be an ancient solution of \eqref{eq:CF} satisfying $\kappa(M\times(-\infty,0])\subset\Gamma_0\Subset\Gamma$ and one of the conditions (\ref{hyp:typeI})--(\ref{hyp:bsr}) of Theorem \ref{thm:sphere_char_typeI}. Then $\{M_t\}_{t\in(-\infty,1)}$ is a union of shrinking spheres. 
\end{theorem}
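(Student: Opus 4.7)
The plan is to show that $\AA\equiv 0$ on $M\times(-\infty,0]$, which forces each connected component of each time slice to be totally umbilic, hence a round sphere (using that $F>0$ guarantees non-zero mean curvature on each component) and therefore, under \eqref{eq:CF}, a shrinking sphere becoming singular at $t=1$. The argument has two ingredients: a sharp finite-time umbilic estimate obtained by Stampacchia iteration, and a rescaling pushing the estimate back to $t=-\infty$.

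First I would establish the finite-time umbilic estimate
\[
|\AA|^2 \leq \varepsilon F^2 + C_\varepsilon
\]
for smooth solutions of \eqref{eq:CF} on $[0,T)$ with $\kappa\subset\Gamma_0\Subset\Gamma$, in the form $C_\varepsilon=\tilde C_\varepsilon R^{-2}$ with $\tilde C_\varepsilon=\tilde C_\varepsilon(n,f,\Gamma_0,V,\tau,\varepsilon)$ scaling-invariant and $R^{-1}$ an upper bound for $\max_{M\times\{0\}}F$ (as in the remark following Theorem~\ref{cylindrical_ests}). The derivation mirrors that of Theorem~\ref{cylindrical_ests}: compute the evolution equation for $|\AA|^2 F^{\sigma-2}$ for small $\sigma>0$, test against a large power, and integrate by parts using Codazzi together with a Simons-type identity. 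Strict concavity of $f$ in non-radial directions (cf.\ \cite{An94a}) furnishes a Poincar\'e-type lower bound on the resulting reaction term, supplying the good sign needed to close the $L^p$ estimates; the Michael--Simon Sobolev inequality then iterates these to the $L^\infty$ bound above.

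Next I would apply this estimate to the restriction of the ancient solution to $[t_1,0]$ and send $t_1\to-\infty$. Under either hypothesis (\ref{hyp:typeI}) or (\ref{hyp:bsr}), the shrinking-sphere barrier applied to an inscribed or enclosing ball forces the diameter of $M_{t_1}$ to diverge as $t_1\to-\infty$, and combined with the curvature-ratio or type-I bound this yields $\max_{M\times\{t_1\}}F\to 0$ (hence $R\to\infty$) while $V$, $\tau$, and therefore $\tilde C_\varepsilon$, remain uniformly bounded. Consequently $C_\varepsilon(t_1)=\tilde C_\varepsilon R^{-2}\to 0$ as $t_1\to-\infty$, so at each fixed $(x,t)\in M\times(-\infty,0]$ we obtain $|\AA|^2(x,t)\leq\varepsilon F^2(x,t)$ for every $\varepsilon>0$. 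Sending $\varepsilon\to 0$ gives $\AA\equiv 0$, as required.

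The main obstacle will be Step~1. Although the Stampacchia iteration scheme parallels that of Theorem~\ref{cylindrical_ests} quite closely, producing a reaction term of the correct sign now demands the \emph{strict} concavity hypothesis in its full force---specifically, the two-sided Poincar\'e-type inequality on the tangent spaces to the level sets of $f$---and careful manipulation is needed to absorb the Codazzi cross terms so that the iteration actually closes. Once that is in hand, the rescaling in Step~2 is a routine bookkeeping exercise with scaling-invariant quantities, and the totally umbilic conclusion follows immediately.
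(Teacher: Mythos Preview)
Your proposal takes a route that looks attractive but has a genuine gap in Step~1, and this gap is precisely what forces the paper to argue differently.

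The difficulty is that the solution is \emph{not assumed convex}: we only know $\kappa\in\Gamma_0\Subset\Gamma$ with $\Gamma_+\subset\Gamma$, so the solution could be merely $n$-convex (i.e.\ $H>0$, which follows automatically from concavity of $f$). A direct umbilic estimate $|\AA|^2\leq\varepsilon F^2+C_\varepsilon$ by Stampacchia iteration in this generality is not available. In the scheme of Theorem~\ref{cylindrical_ests}, the crucial step is the Poincar\'e-type inequality (Lemma~\ref{lem:Poincare}), which requires the support of the pinching function to avoid $\Cyl$. But the support of $\{|\AA|^2>\varepsilon F^2\}$ contains cylindrical points (cylinders have $|\AA|\neq 0$), so Lemma~\ref{lem:Poincare} does not apply. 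Your appeal to strict concavity of $f$ to manufacture the Poincar\'e inequality directly is not substantiated: strict non-radial concavity controls $\ddot F(\nabla A,\nabla A)$ only modulo the radial component $\nabla F$, and in the $Q_{G,F}$ term it appears multiplied by $\dot G$, which for $G=|\AA|^2/F$ is not positive definite. The known umbilic estimates for concave speeds (Chow, Andrews) all require convexity of the solution or $f|_{\partial\Gamma}=0$. Step~2 also has an issue: under type-I decay one only has $\mu_{t_1}(M)\leq C(1-t_1)^r$ for some uncontrolled $r$ (Lemma~\ref{lem:area_conds}~(vii)), so the rescaled area $V=\mu_{t_1}(M)R^{-n}\sim(1-t_1)^{r-n/2}$ need not stay bounded.

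The paper circumvents Step~1 entirely by an \emph{inductive} argument on $m$. Since $f$ concave forces $\Gamma\subset\Gamma_n$, one can start at $m=n-1$, where Proposition~\ref{prop:m_convex_anc} (the ancient cylindrical estimate, proved via the $L^p$ differential inequality~\eqref{anc_lp} and the volume decay~\eqref{eq:volume_decay} rather than a rescaling) gives $\kappa_n\leq c_{n-1}F$. Strict non-radial concavity then enters via the strong maximum principle (Proposition~\ref{prop:strict_m_convex_anc}): equality in $\kappa_n\leq c_mF$ forces the solution to be a sphere. A blow-down argument upgrades this to a uniform gap $\kappa_n\leq(c_{n-1}-\alpha)F$, which implies uniform $(n-1)$-convexity, allowing Proposition~\ref{prop:m_convex_anc} to be applied again at level $m=n-2$. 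Iterating down to $m=0$ gives $\kappa_n\leq c_0F$, i.e.\ umbilicity. Thus strict concavity is used not to produce a gradient term in an $L^p$ estimate, but to rule out equality in each successive cylindrical estimate.
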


Our final result, which appears to be new also for the mean curvature flow, makes use of recent results for two-convex translating solutions of \eqref{eq:CF} \cite{Ha15,BoLa16}.
\begin{theorem}
\label{thm:grad_sphere}
There exists a positive constant $\delta_0 = \delta_0(n, f, \Gamma_0)$ with the following property: Let $f:\Gamma\to\R$ be a concave admissible speed such that $\Gamma_+\subset\Gamma\subset\Gamma_2$ and $f|_{\Gamma_+}$ is inverse-concave and let $X : M\times (-\infty, 1)\to \mathbb{R}^{n+1}$, $n\geq 3$, be an ancient solution of \eqref{eq:CF} satisfying $\kappa(M\times(-\infty,0])\subset\Gamma_0\Subset\Gamma$ and the gradient estimate
\begin{equation}
\label{eq:strong_grad}
\limsup_{t \to -\infty} \max_{M_t} \frac{|\nabla A|^2}{F^4}<\delta_0\,.
\end{equation}
Then $X$ is a shrinking sphere. 
\end{theorem}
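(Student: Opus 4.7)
The plan is to argue by contradiction, exploiting the recent classification of two-convex translating solitons for this class of flows together with the scale invariance of the gradient hypothesis. Suppose $X$ is not a shrinking sphere. Since the hypotheses of Theorem \ref{thm:grad_sphere} match those of Theorem \ref{thm:2conv_sphere} precisely, neither type-I curvature decay nor bounded curvature ratios can hold, so
\[
\limsup_{t\to-\infty}\frac{\max_{M_t}F}{\min_{M_t}F}=\infty\,.
\]

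To extract useful information from this, the idea is to perform a Hamilton-style point-picking procedure and extract a parabolic blow-up limit. Choose a sequence $(p_k,t_k)$ with $t_k\to-\infty$ and $\lambda_k:=F(p_k,t_k)\to\infty$ maximising $F$ on a sufficiently large backward parabolic neighbourhood, and define the dilated flows
\[
\tilde X_k(x,s):=\lambda_k\bigl(X(x,t_k+\lambda_k^{-2}s)-X(p_k,t_k)\bigr)\,.
\]
The uniform ellipticity, the ancient convexity estimate of Proposition \ref{prop:m_convex_anc} applied with $m=1$, the cylindrical and inscribed curvature estimates (Theorems \ref{cylindrical_ests} and \ref{inscribed_improves}), and Krylov--Safonov theory combine to yield uniform $C^\infty$ estimates on compact parabolic cylinders; a subsequence therefore converges smoothly to a complete, uniformly two-convex, convex eternal solution $\tilde X_\infty:M_\infty\times\R\to\R^{n+1}$, which is non-compact by the assumed ratio blow-up. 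The gradient bound \eqref{eq:strong_grad} is scale-invariant, so it persists in the limit:
\[
\sup_{M_\infty\times\R}\frac{|\nabla A|^2}{F^4}\le\delta_0\,.
\]

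Since $f$ is concave and $\tilde X_\infty$ is convex, Andrews' differential Harnack inequality applies to $\tilde X_\infty$. The point-picking was arranged so that $\tilde F_\infty(p_\infty,0)$ equals the space-time supremum of $F$ on the limit; the rigidity case of the Harnack inequality then forces $\tilde X_\infty$ to be a translating soliton. By the classification of two-convex translators of \eqref{eq:CF} established in \cite{Ha15,BoLa16}, whose hypotheses are precisely those assumed here on $f$ and $\Gamma$, such a translator is, up to rigid motion and scaling, a rotationally symmetric bowl soliton $\Sigma_f$ determined by $f$.

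The final step is to convert the gradient bound on $\tilde X_\infty$ into a contradiction. The key claim is that there exists $c_0=c_0(n,f,\Gamma_0)>0$ such that every two-convex translating soliton $\Sigma$ of \eqref{eq:CF} with $\kappa(\Sigma)\subset\overline{\Gamma_0}$ satisfies
\[
\sup_{\Sigma}\frac{|\nabla A|^2}{F^4}\ge c_0\,.
\]
Non-vanishing of $|\nabla A|^2/F^4$ on any non-cylindrical translator is immediate, since $F$ varies along the translation direction; uniformity of $c_0$ then follows from a compactness argument in the class of such translators, which is pre-compact modulo scaling and rigid motions by the uniform ellipticity constraint together with the estimates of the previous sections. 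Choosing $\delta_0:=c_0/2$ contradicts the limiting gradient bound, completing the proof. The main obstacle is this last step: while non-vanishing of $|\nabla A|^2/F^4$ on any individual bowl is essentially clear, obtaining a lower bound depending only on $n$, $f$ and $\Gamma_0$—rather than on the individual soliton—requires either an ODE analysis of the bowl profile in the spirit of \cite{BoLa16} or a careful compactness/rigidity argument to rule out a degenerating sequence of translators along which $|\nabla A|^2/F^4$ tends to zero.
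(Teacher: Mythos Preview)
Your overall strategy matches the paper's: assume the solution is not a sphere, rule out type-I decay via Theorem~\ref{thm:2conv_sphere}, blow up a type-II sequence to a convex, uniformly two-convex translator, pass the scale-invariant gradient bound to the limit, and obtain a contradiction from a uniform lower bound on $\sup|\nabla A|^2/F^4$ for such translators. However, there are two genuine gaps.

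First, you invoke Proposition~\ref{prop:m_convex_anc} (and the inscribed curvature estimate) to control the blow-up sequence, but that proposition requires the volume decay hypothesis \eqref{eq:volume_decay}, which you never verify. The finite-time estimates of Theorems~\ref{cylindrical_ests} and~\ref{inscribed_improves} do not help here, since their constants depend on the initial data and are therefore not uniform along an ancient solution. The paper fills this gap at the outset: the gradient hypothesis \eqref{eq:strong_grad} with $\delta_0$ small enough forces condition (vi) of Lemma~\ref{lem:area_conds} (with $k=1$), since $|\langle\nabla H/H,\nabla F/F\rangle|\leq C\delta_0 H^2$ while the right-hand side contains the term $\tfrac{1}{n^2(n+1)}H^2$. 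This yields \eqref{eq:volume_decay}, and then Proposition~\ref{prop:m_convex_anc} gives $\kappa_n\leq c_1F$ on the entire ancient solution, from which strict convexity follows via the splitting theorem. With convexity in hand for the original flow, the blow-up argument goes through as in \cite{HuSi15}.

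Second, for the translator lower bound you route through the full bowl classification of \cite{Ha15,BoLa16} and then acknowledge that obtaining a uniform $c_0(n,f,\Gamma_0)$ is ``the main obstacle''. The paper bypasses the classification entirely with a short direct argument (Lemma~\ref{lem:trans_gap}): given a sequence of strictly convex, uniformly two-convex translators with $\sup|\nabla A_j|^2/F_j^4\leq 1/j$, normalize so that $\max F_j=1$ is attained at the origin with a common translation vector $T$; the gradient bound yields local $C^2$ compactness, and the limit satisfies $|\nabla A_\infty|\equiv 0$, hence $F_\infty\equiv 1$, hence $\nu_\infty\equiv -T$ by the translator equation, which is impossible for a curved hypersurface. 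This is precisely the compactness argument you allude to, but it is elementary and requires no knowledge of the bowl profile.
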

\begin{remark}\mbox{}
The proof of Theorem \ref{thm:grad_sphere} works also in case $n=2$ if we assume, in addition, that the solution is interior non-collapsing (cf. \cite{HaHe,BoLa16}).
\end{remark}

\section{Preliminaries}\label{sec_prelims}

Let $X:M\times I\to\R^{n+1}$ be a solution of \eqref{eq:CF}. First, we recall that\footnote{Unless otherwise specified, integrals will be taken over all of $M$ and with respect to the measure $\mu_t$ induced by the immersion $X_t$.}
\begin{equation}
\label{area_decay}
\frac{d}{dt} \int \varphi  = \int \partial_t\varphi-\int \varphi FH
\end{equation}
for almost every $t$ for any test function $\varphi:M \times I \to \mathbb{R}$ for which the integrals are defined. 

Denote by $\sym(n)$ the space of symmetric $n\times n$ matrices. We will occasionally abuse notation by writing $Z \in \Gamma$ if the eigenvalue $n$-tuple of $Z \in \sym(n)$ is contained in a symmetric set $\Gamma\subset \R^n$. For a smooth, symmetric $g:\Gamma \to \mathbb{R}$ giving rise (by abuse of notation) to a function $g:\sym(n) \to \mathbb{R}$, vectors $v \in \mathbb{R}^n$, $z \in \Gamma$, and matrices $V \in \sym(n)$, $Z \in \Gamma$, we write
\begin{align*}
\dot g^i(z)v_i &= \frac{d}{ds}\Big|_{s=0} g(z + sv), \qquad \quad\;\;\;\; \ddot g^{ij} (z)v_i v_j = \frac{d^2}{ds^2}\Big|_{s=0} g(z + sv)\,,\\
\dot g^{ij}(Z)V_{ij} &=\frac{d}{ds}\Big|_{s=0} g(Z + s V), \quad \ddot g^{ij, kl}(Z)V_{ij}V_{kl} = \frac{d^2}{ds^2} \Big|_{s=0} g(Z+sV)\,.
\end{align*}
When $z$ is the eigenvalue $n$-tuple of $Z$, $\dot g^{i}(z)\delta^{ij}=\dot g^{ij}(Z)$. If, in addition, the components of $z$ are all distinct, then \cite{Gl,Ge96,An07}
\begin{equation}
\label{symm_secs}
\ddot g^{ij, kl} (Z) V_{ij} V_{kl} = \ddot g^{ij}(z) V_{ii} V_{jj} + 2\sum_{i > j} \frac{\dot g^i(z) - \dot g^j(z)}{z_i - z_j} (V_{ij})^2\,.
\end{equation}
Clearly, if $g$ is concave (resp. convex) with respect to the matrix components, then it is also concave (resp. convex) with respect to the eigenvalues. Conversely, if $g$ is concave (resp. convex) with respect to the eigenvalues, then it is also Schur concave (resp. convex) with respect to the eigenvalues, and hence concave (resp. convex) with respect to the matrix components \cite{EckHui89}.

To simplify notation, if $G:=g(\kappa)=g(A)$, then we write $\dot G^i=\dot g^i(\kappa)$ and similarly for higher derivatives.

We will find it convenient to define (in any orthonormal frame) $\langle u, v \rangle_F :=  \dot F^{ij}u_iv_j$ and  $|v|^2_F=\langle v, v\rangle_F$ for any $u$, $v \in TM$, as well as $\Delta_F := \dot F^{ij} \nabla_i \nabla_j$ (note however that, in general, $\Delta_F$ may not have a divergence structure). Uniform ellipticity $\kappa(M\times[0,T))\subset \Gamma_0\Subset\Gamma$ of the flow \eqref{eq:CF} ensures that $\langle\cdot\,,\,\cdot\rangle_F$ is uniformly equivalent to the induced metric and that $\Delta_F$ is uniformly elliptic. We make frequent use of the following Lemma (see \cite{An94a}):
\begin{lemma}
\label{curvature_evol}
Let $g$ be a smooth, symmetric, 1-homogeneous function defined on an open, symmetric cone $\Gamma\subset\R^n$. If $\kappa(x,t)\in \Gamma$ for every $(x,t) \in M \times I$ and $G(x,t):= g(\kappa(x,t))$ then, in any orthonormal frame,
\[(\partial_t - \Delta_F) G = |A|^2_F G + Q_{G,F}(\nabla A)\,,\]
where 
\[Q_{G,F}(\nabla A) := (\dot G^{kl} \ddot F^{pq,rs} - \dot F^{kl} \ddot G^{pq,rs}) \nabla_k A_{pq} \nabla_l A_{rs}\,.\]
In particular,
\[(\partial_t - \Delta_F) F = |A|^2_F F\,.\]
\end{lemma}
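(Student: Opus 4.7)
The plan is to compute $\partial_t G$ and $\Delta_F G$ separately via the chain rule and reconcile them through a Simons-type commutator identity. First, under $\partial_t X = -F\nu$, differentiating $\langle \nu,\nu\rangle\equiv 1$ and $\langle \nu,\partial_j X\rangle\equiv 0$ in time yields $\partial_t\nu=\nabla F$, and a direct computation then gives the evolution of the Weingarten map as a $(1,1)$-tensor,
\[
\partial_t A^i_j = \nabla^i\nabla_j F + F A^i_k A^k_j.
\]
(The sign of the nonlinear term is readily verified on the shrinking sphere.) Differentiating $G=g(A)$ by the chain rule and using $1$-homogeneity/Euler, in an orthonormal frame,
\[
\partial_t G = \dot G^{ij}\nabla_i\nabla_j F + F|A|^2_G.
\]

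Next, I would expand $\nabla_l\nabla_k G = \ddot G^{ij,pq}\nabla_l A_{ij}\nabla_k A_{pq} + \dot G^{ij}\nabla_l\nabla_k A_{ij}$ and similarly $\nabla_i\nabla_j F = \ddot F^{pq,rs}\nabla_i A_{pq}\nabla_j A_{rs} + \dot F^{kl}\nabla_i\nabla_j A_{kl}$. Subtracting $\Delta_F G = \dot F^{kl}\nabla_k\nabla_l G$ from $\partial_t G$, the $\ddot{}$-terms assemble into $Q_{G,F}(\nabla A)$, and we are left with
\[
(\partial_t-\Delta_F)G \;=\; Q_{G,F}(\nabla A) \;+\; \dot G^{ij}\dot F^{kl}\bigl(\nabla_i\nabla_j A_{kl}-\nabla_k\nabla_l A_{ij}\bigr) \;+\; F|A|^2_G.
\]

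For the bracketed term, I would apply Codazzi ($\nabla_k A_{ij}=\nabla_i A_{kj}$) to the outer and inner derivatives, commute covariant derivatives via the Ricci identity, and substitute the Riemann tensor using Gauss ($R_{abcd}=A_{ac}A_{bd}-A_{ad}A_{bc}$) to obtain a Simons-type identity of the form
\[
\nabla_i\nabla_j A_{kl}-\nabla_k\nabla_l A_{ij} \;=\; A_{il}(A^2)_{jk} - A_{kl}(A^2)_{ij} + A_{ij}(A^2)_{kl} - A_{jk}(A^2)_{il}.
\]
In a basis diagonalizing $A$, both $\dot G$ and $\dot F$ are also diagonal; tracking which index quadruples are forced equal by the resulting delta functions, the two ``cross'' terms each contribute $\pm\sum_i\dot G^i\dot F^i\kappa_i^3$ and cancel, while the two ``tensor-product'' terms evaluate, by Euler's identity $\sum_i\dot f^i\kappa_i=F$ (and likewise for $G$), to $G|A|^2_F-F|A|^2_G$. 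Substituting, the $F|A|^2_G$ contributions cancel, leaving $(\partial_t-\Delta_F)G = Q_{G,F}(\nabla A) + |A|^2_F G$. The particular case $G=F$ is immediate since $Q_{F,F}\equiv 0$ by the antisymmetric structure of the definition of $Q$.

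The main technical obstacle I anticipate is the bookkeeping in the Simons-type identity: the Ricci identity combined with the Gauss equation produces four curvature terms whose signs and index placements must be tracked through the contraction against the doubly-indexed $\dot G\otimes\dot F$, and one must carefully exploit the symmetries of $A$, $A^2$, $\dot G$ and $\dot F$ to simplify. The essential cancellation between the $+F|A|^2_G$ from the evolution of $A$ and the $-F|A|^2_G$ arising from the commutator is what produces the clean final form $|A|^2_F G$ for the reaction term.
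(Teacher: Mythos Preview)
Your argument is correct and is precisely the standard derivation: chain rule on $G=g(A)$ for both $\partial_t$ and $\Delta_F$, a Simons-type commutator identity obtained from Codazzi and Gauss, and the Euler relation for $1$-homogeneous functions to reduce the contracted curvature terms to $G|A|^2_F - F|A|^2_G$, with the latter cancelling the reaction term from $\partial_t A$. The paper itself does not supply a proof of this lemma but simply cites Andrews \cite{An94a}, so there is nothing further to compare; your write-up is exactly the computation underlying that reference.
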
 

We also make use of the following evolution inequalities for the inscribed and exscribed curvatures, which hold in the barrier sense (see \cite{ALM13} and \cite{NC2}): If $f$ is concave, then the inscribed curvature satisfies 
\begin{equation}
\label{insc_evol}
(\partial_t - \Delta_F) \overline k \leq  |A|^2_F \overline k - 2 \langle \nabla \overline k ,  \mathcal S(\nabla \overline k )\rangle _F
\end{equation}
on the set $\oversl  U := \{(x,t) \in M \times I : \overline k (x,t) > \kappa_n(x,t)\}$, where $ \mathcal S := (\overline k I - A)^{-1}$. 

If $f$ is convex, then the exscribed curvature satisfies
\begin{equation}
\label{exsc_evol}
(\partial_t - \Delta_F)\underline k \geq |A|^2_F\underline k + 2 \langle \nabla \underline k , \mathcal T (\nabla \underline k)\rangle_F
\end{equation}
on the set $\undersl U := \{(x,t) \in M\times [0, T) : \underline k (x,t) < \kappa_1(x,t)\}$, where $\mathcal T := (A-\underline k I)^{-1}$. 


A key ingredient in our proof of the cylindrical estimates will be a `Poincar\'{e}-type inequality' which is similar in spirit to those used in previous Stampacchia iteration arguments (cf. \cite{Huisk84,Cw85,Cw87,HuSi99a,HuSi99b,HuBre}). In order to state it, we define
\[\Cyl_m := \{(\underbrace{0,\dots, 0}_{m-\text{times}}, k, \dots, k) \in \mathbb{R}^n : k>0\}\]
and
\[\Cyl := \bigcup_{m =0}^{n-1}\Cyl_m.\]
\begin{lemma}[\cite{Lan16}]\label{lem:Poincare}
Let $\Gamma\subset\R^{n+1}$ be an open symmetric cone and $\Gamma_0$ an open, symmetric cone satisfying $\Gamma_0 \Subset (\Gamma\setminus \Cyl)$. Then there is a positive constant $\gamma(\Gamma_0, n)$ with the following property: Let $X:M\to \mathbb{R}^{n+1}$ be a smooth, closed hypersurface and $u \in W^{1,2}(M)$ a function satisfying $\kappa(\supp u) \subset \Gamma_0$. Then, for every $r>0$,
\[\gamma \int_M u^2 |A|^2 \leq r^{-1} \int_M |\nabla u|^2 + (1+r) \int_M u^2 \frac{|\nabla A|^2}{H^2}\,.\]
\end{lemma}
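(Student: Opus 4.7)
The plan is to derive the inequality from a divergence identity combined with Codazzi's equation, with the parameter $r$ arising from a Young inequality. The underlying intuition is that the cylindrical locus $\Cyl$ consists precisely of the eigenvalue configurations for which a connected hypersurface can satisfy $\nabla A\equiv 0$; the separation $\overline\Gamma_0\cap S^{n-1}\Subset\Gamma\setminus\Cyl$ should therefore provide a uniform ``quantitative non-degeneracy'' of $A$ on $\supp u$.

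First, I would establish a pointwise algebraic inequality $\gamma_1|A|^2\le \Phi(A)$, valid whenever the eigenvalue tuple of $A$ lies in $\overline\Gamma_0\cap S^{n-1}$, for some constant $\gamma_1=\gamma_1(\Gamma_0,n)>0$ and a suitable one-homogeneous nonnegative scalar $\Phi$ chosen to vanish precisely on $\Cyl$. A natural candidate builds $\Phi$ from a Rayleigh-type quotient of $A$ (against its top eigenspace and $H$) that captures the cylindrical symmetry; the required gap then follows by compactness of $\overline\Gamma_0\cap S^{n-1}$ in the open set $\Gamma\setminus\Cyl$.

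Second, I would produce a divergence identity of the form
\[
\Div\Bigl(\tfrac{u^2}{H^2}\,A(\nabla H)\Bigr)=\tfrac{u^2}{H^2}\Phi(A)+\tfrac{2u}{H^2}\,\nabla u\cdot A(\nabla H)+\tfrac{u^2}{H^2}R,
\]
where the remainder $R$ is controlled pointwise by $|\nabla A|^2$ after applying the Codazzi identity $\nabla^{j}A_{ij}=\nabla_i H$ to expand $\Div(A(\nabla H))$. Integrating over the closed manifold $M$ annihilates the divergence term, and substituting the pointwise bound $\Phi(A)\ge\gamma_1|A|^2$ from Step~1 yields
\[
\gamma_1\int u^2|A|^2\le -2\int \tfrac{u}{H^2}\,\nabla u\cdot A(\nabla H)-\int \tfrac{u^2}{H^2}R.
\]

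Finally, Young's inequality $|ab|\le\tfrac{a^2}{2r}+\tfrac{rb^2}{2}$ applied to the mixed integral, combined with the bound $|A|\le C(\Gamma_0)H$ (from $\kappa\in\Gamma_0$) and the Kato-type estimate $|\nabla H|^2\le n|\nabla A|^2$, produces the $r^{-1}\int|\nabla u|^2$ term from the $a^2$-piece and, together with $R$, the $(1+r)\int u^2|\nabla A|^2/H^2$ term from the $b^2$-piece. The main obstacle will be identifying the matched pair $(V,\Phi)$ in the first two steps: $\Phi$ must vanish exactly on $\Cyl$ (so that Step~1 is non-trivial under the cone separation) and simultaneously appear as the principal scalar produced by expanding $\Div V$ via Codazzi. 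The overall scheme is a variant of the one-sided Poincar\'e inequalities used in earlier Stampacchia iteration arguments \cite{Huisk84,HuSi99a}, abstracted so that the constant depends only on $(\Gamma_0,n)$ rather than on any particular speed.
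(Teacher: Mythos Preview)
The paper does not prove this lemma; it merely cites \cite{Lan16}. So there is no ``paper's own proof'' to compare against directly, and your proposal must be judged on its own merits.

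Your overall scheme (divergence identity + Codazzi + compactness argument for the constant + Young's inequality) is indeed the right template. However, the specific vector field you propose, $V=\tfrac{u^2}{H^2}A(\nabla H)$, does not work. Expanding $\Div V$ produces a second-order term $\tfrac{u^2}{H^2}A^{ij}\nabla_i\nabla_j H$; this is \emph{not} pointwise controlled by $|\nabla A|^2$ as you assert, and if you instead integrate it by parts (using $\nabla_iA^{ij}=\nabla^jH$), every term cancels and you are left with the tautology $0=0$. So no zero-order ``principal scalar'' $\Phi(A)$ ever emerges from this choice of $V$. You correctly flag that identifying the matched pair $(V,\Phi)$ is ``the main obstacle'', but the candidate you put forward does not clear it.

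The standard route is not a first-order divergence identity but the contracted Simons identity $\Delta A_{ij}=\nabla_i\nabla_j H + H(A^2)_{ij}-|A|^2A_{ij}$, tested against a symmetric tensor $B$ built from $A$. The resulting zero-order term is $H\langle B,A^2\rangle-|A|^2\langle B,A\rangle$, and the task is to choose $B$ so that this quantity vanishes exactly on $\Cyl$ and is bounded below by $\gamma|A|^2H^2$ on $\overline\Gamma_0\cap S^{n-1}$ by compactness. For example, $B=HA^2-|A|^2A$ gives the nonnegative scalar $|HA^2-|A|^2A|^2$, which vanishes precisely on $\Cyl$. After multiplying by a suitable power of $H^{-1}$, integrating by parts twice (once for $\Delta A$, once for $\nabla^2 H$), and estimating $|\nabla B|\lesssim |A|^2|\nabla A|$, the cross terms are of the form $\int |u||\nabla u|\,|\nabla A|/H$, and Young's inequality with parameter $r$ finishes the job exactly as you describe in your final step.
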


We will require a similar estimate when studying the inscribed curvature. This will be derived from the identity (see \cite{Lan16})
\begin{equation}
\label{insc_ident}
\frac{1}{2}H \leq \Div (\mathcal S^2 (\nabla \overline k) ) - \left\langle \mathcal S, \nabla_{\mathcal S^2 (\nabla \overline k)}A\right\rangle + \frac{1}{2} |\mathcal S(\nabla \overline k)|^2 \tr(\mathcal S)\; \text{ in }\; \oversl U\,,
\end{equation}
which holds in the distributional sense.

Finally, we make note of Andrews' differential Harnack inequality \cite{An94b} (cf. \cite{Cw91,Ham95}), which states that any strictly convex solution $X : M\times [t_0,T) \to \mathbb{R}^{n+1}$ of \eqref{eq:CF} moving by a convex or inverse-concave admissible speed satisfies 
\begin{equation}
\label{eq:Harnack}
\partial_t F-A^{-1}(\nabla F,\nabla F)+\frac{F}{2(t-t_0)} \geq 0\,.
\end{equation}

\section{One-sided curvature pinching}

Let $\varphi :\mathbb{R} \to [0,\infty)$ be a smooth, convex function that is positive on $(-\infty,0)$ and vanishes identically on $[0, \infty)$. Note that such a function necessarily satisfies $\varphi'\leq 0$. For concreteness, observe that the following function satisfies these properties:
\[
\varphi(r):=\begin{cases}
r^4\mathrm{e}^{-\frac{1}{r^2}}&\text{if}\; r<0\\
0&\text{if}\; r\geq 0\,.
\end{cases}
\] 
We define a symmetric, one-homogeneous function on $\Gamma$ by
\[g_1(z) := f(z)\sum_{i=1}^n \varphi\left(\frac{c_m f(z)-z_i}{f(z)}\right)\]
and set $G_1(x,t) = g_1(\kappa(x,t))$. This is essentially the pinching function we want to study. Observe that $G_1(x,t) = 0$ precisely when $\kappa_n(x,t) \leq c_m F(x,t)$. 

\begin{lemma}
\label{pinching_evol}
The function $G_1$ satisfies the differential equality 
\[(\partial_t - \Delta_F)G_1 \leq |A|^2_F G_1\]
on $M \times [0,T)$. 
\end{lemma}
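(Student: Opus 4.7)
The plan is to apply Lemma \ref{curvature_evol} to the smooth, symmetric, one-homogeneous function $g_1$, which yields
\[
(\partial_t - \Delta_F) G_1 = |A|^2_F G_1 + Q_{G_1, F}(\nabla A).
\]
It then suffices to show that the gradient term $Q_{G_1, F}(\nabla A)$ is pointwise non-positive.

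The key structural observation is that $g_1$ factorizes as $g_1(z) = f(z)\, \Phi(z/f(z))$, where $\Phi(y) := \sum_{i=1}^{n} \varphi(c_m - y_i)$ is smooth, symmetric, and convex (being a sum of convex functions of single coordinates). Thus $g_1$ is the product of the concave, positive, one-homogeneous $f$ with a zero-homogeneous factor built from the convex $\Phi$; the required inequality should follow from combining these two features with the sign information $\varphi' \leq 0$ and $\varphi \geq 0$.

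Concretely, I would work at a fixed point $(x,t)$ in a principal curvature frame and use formula \eqref{symm_secs} to split $Q_{G_1, F}(\nabla A)$ into a \emph{diagonal} contribution, quadratic in the components $\nabla_k A_{ii}$, and an \emph{off-diagonal} contribution, quadratic in the components $\nabla_k A_{ij}$ with $i \neq j$. For the off-diagonal part, compute the first-order derivative $\dot g_1^k$ via the chain rule; the $\dot f$-proportional part of $\dot g_1^i - \dot g_1^j$ cancels against the corresponding contribution coming from $\dot F^k$, leaving a divided difference of $\dot \Phi^i$ whose sign is forced by $\varphi' \leq 0$. For the diagonal part, expand the Hessian $\ddot g_1^{ij}$ by the product and chain rules: the result decomposes into terms containing $\varphi''$ (paired with products of $\dot f$-type factors), $\varphi'$ (paired with $\ddot f$ or second-order chain rule remainders), and $\varphi$ (paired with $\ddot f$). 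After rearranging and exploiting the precise zero-homogeneous normalization of the argument $(c_m f - z_i)/f$, this diagonal contribution should assemble into a non-positive quadratic form in the $\nabla_k A_{ii}$, with the sign provided by $\ddot f \leq 0$ (concavity of $f$) and $\varphi'' \geq 0$ (convexity of $\varphi$).

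The main obstacle will be the diagonal computation: verifying that every cross-term combines with the correct sign is the intricate part, and boils down to checking that the specific normalization of the argument of $\varphi$ (division by $f$, rendering the argument zero-homogeneous) produces exactly the cancellations needed. This is a variant of the type of pinching-function calculation carried out in \cite{ALM14} and \cite{HuBre}, and I would follow those templates in executing it.
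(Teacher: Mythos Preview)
Your proposal is correct and follows essentially the same approach as the paper: reduce via Lemma \ref{curvature_evol} to showing $Q_{g_1,f}(T)\le 0$, split into diagonal and off-diagonal parts using \eqref{symm_secs}, and verify non-positivity using exactly the sign information you identify ($\varphi'\le 0$, $\varphi''\ge 0$, and concavity of $f$). The paper carries out the explicit computation of $\dot g_1^k$ and $\ddot g_1^{kl}$ and confirms that the diagonal part assembles into $-\sum_k\varphi'(\xi_k)\ddot f^{pq}T_{kpp}T_{kqq}$ plus a negative complete square weighted by $\varphi''$, while the off-diagonal part is controlled by the Schur concavity of $f$ and the monotonicity of $\varphi'$, just as you anticipate.
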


\begin{remark}
Although we have chosen to work with a smooth function for $G_1$ here, it is possible to proceed using $G_1=\kappa_n-c_mF$ directly, since (although it is not smooth) it satisfies the differential inequality of Lemma \ref{pinching_evol} in the distributional sense.
\end{remark}

\begin{proof}[Proof of Lemma \ref{pinching_evol}]
By Lemma \ref{curvature_evol} it suffices to show that 
\[(\dot g_1^{kl}\ddot f^{pq,rs} - \dot f^{kl}\ddot g_1^{pq,rs})\big|_Z T_{kpq}T_{lrs} \leq 0\]
for every $Z \in \sym(n)$ and totally symmetric $T \in \mathbb{R}^n \otimes \mathbb{R}^n \otimes \mathbb{R}^n$. By continuity, we may assume the eigenvalues $z_i$ of $Z$ are all distinct. Suppressing dependencies on $Z$, we use \eqref{symm_secs} to write {\small
\begin{align*}
Q_{g,f}(T)&:=(\dot g_1^{kl}\ddot f^{pq,rs}-\dot f^{kl} \ddot g_1^{pq,rs})T_{kpq} T_{lrs} \\
&=  ( \dot g_1^k \ddot f^{pq}-\dot f^{k} \ddot g_1^{pq}) T_{kpp} T_{lqq}+ 2 \sum_{k\geq 1\atop p >q} \left(\dot g_1^k \frac{\dot f^p - \dot f^q}{\lambda_p - \lambda_q} - \dot f^k \frac{\dot g_1^p- \dot g_1^q}{\lambda_p - \lambda_q} \right)(T_{kpq})^2\\
&=: \text{I} + \text{II}.
\end{align*}}
Abbreviating $\xi_i(z) = \frac{c_m f(z) - z_i}{f(z)}$, we compute
\begin{align*}
\dot g_1^k = -\varphi'(\xi_k)+\dot f^k \sum_i \left(\varphi(\xi_i)+ \frac{z_i}{f} \varphi'(\xi_i)\right)
\end{align*}
and
\begin{align*}
\ddot g_1^{kl} &= \frac{1}{f}\sum_i \varphi''(\xi_i)\left(\frac{\lambda_i}{f}\dot f^l - \delta^l_i \right)\left(\frac{\lambda_i}{f} \dot f^k-\delta^k_i\right)\\
&\;\;\;\; +\ddot f^{kl} \sum_i \left( \varphi(\xi_i)+ \frac{\lambda_i}{f} \varphi'(\xi_i)\right),	
\end{align*}
from which we obtain
\begin{align*}
\text{I} = -\sum_k \varphi'(\xi_k)\ddot f^{pq}T_{kpp}T_{kqq} - \frac{f^k}{f}\sum_i \varphi''(\xi_i)\left(\frac{\lambda_i}{f}\dot f^p T_{kpp} - T_{kii}\right)^2
\end{align*}
and
\begin{align*}
\text{II}=-\sum_k \varphi'\left(\xi_k \right)\sum_{p>q} \frac{\dot f^p - \dot f^q}{\lambda_p - \lambda_q}(T_{kpq})^2 +  \dot f^k \sum_{p>q} \frac{\varphi'\left(\xi_p\right)-\varphi'\left(\xi_q\right)}{\lambda_p - \lambda_q}(T_{kpq})^2.
\end{align*}
By the concavity and monotonicity properties of $f$ and $\varphi$, we see that both terms are non-positive (note that $z_p\geq z_q\Rightarrow \xi_p\leq \xi_q$).
\end{proof}

We now define $G_2 := 2\Theta F - H- |A|$ on $M\times [0,T)$, where 
\[\Theta:= \max\left \{\frac{z_1 + \dots + z_n + |z|}{f(z)} : z \in \Gamma_0 \right \}\,.\]
 Then $2\Theta F\geq G_2 \geq \Theta F>0$. Rather than working with $G_1$ directly, we will study the modification $G:=G_1^2/G_2$, since it enjoys a better evolution equation; namely, it provides a good gradient of curvature term which we will need later. Note that $G$ is still a smooth, symmetric, non-negative, one-homogeneous function of the principal curvatures which vanishes precisely where $\kappa_n\leq c_mF$.

\begin{lemma}\label{lem:cylindrical_goodgrad}
There is a constant $\gamma=\gamma(n,f,\Gamma_0,\varepsilon)>0$ such that 
\begin{equation}
\label{cylindrical_aux_evol}
(\partial_t-\Delta_F)G_2 \geq |A|^2_FG_2+\gamma\frac{|\nabla A|^2}{F}
\end{equation}
wherever $G>\varepsilon F$. Consequently, there is a (possibly different) constant $\gamma=\gamma(n,f,\Gamma_0,\varepsilon)>0$ such that
\begin{equation}
\label{mod_pinching_evol}
(\partial_t - \Delta_F)G \leq |A|^2_FG-\gamma G \frac{|\nabla A|^2}{F^2}
\end{equation}
wherever $G>\varepsilon F$.
\end{lemma}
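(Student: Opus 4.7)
The plan is to prove \eqref{cylindrical_aux_evol} directly from Lemma \ref{curvature_evol}, and then deduce \eqref{mod_pinching_evol} from it via the quotient rule for the parabolic operator $\partial_t - \Delta_F$.

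For \eqref{cylindrical_aux_evol}, Lemma \ref{curvature_evol}, together with linearity of $Q_{\cdot,F}$ in its first slot and $Q_{F,F}\equiv 0$, gives
\[
(\partial_t-\Delta_F)G_2 = |A|_F^2\, G_2 - Q_{H,F}(\nabla A) - Q_{|A|,F}(\nabla A)\,,
\]
so I need to produce the lower bound $-Q_{H,F}-Q_{|A|,F}\geq\gamma|\nabla A|^2/F$ on $\{G>\varepsilon F\}$. Working in an eigenbasis of $A$ and using $\dot H = I$, $\ddot H = 0$, $\dot{|A|}^{kl}=A^{kl}/|A|$, this combined gradient term rewrites as
\[
\sum_k\Bigl(1+\tfrac{\kappa_k}{|A|}\Bigr)\bigl(-\ddot F^{pq,rs}\nabla_k A_{pq}\nabla_k A_{rs}\bigr) + \dot F^{kl}\ddot{|A|}^{pq,rs}\nabla_k A_{pq}\nabla_l A_{rs}\,.
\]
Both contributions are pointwise nonnegative: the bound $|\kappa_k|\leq |A|$ together with concavity of $f$ handles the first summand, while monotonicity of $f$ and convexity of the Frobenius norm handle the second.

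The main obstacle is upgrading this pointwise nonnegativity to a uniform quantitative lower bound. I would argue by compactness. The quantity $F\cdot Q_{G_2,F}(\nabla A)/|\nabla A|^2$ is scale-invariant in $\kappa$, so I may normalize $F=1$; then $\kappa$ lies in the compact set $\overline{\Gamma}_0\cap\{f=1\}$, and $\{G\geq\varepsilon\}$ cuts out a compact subset $K_\varepsilon$. It suffices to verify that $Q_{G_2,F}(\kappa)$ is strictly positive definite on totally symmetric $T$ at each $\kappa\in K_\varepsilon$; the desired $\gamma$ then follows from continuity and compactness. Any $T$ in the kernel must kill the $\ddot{|A|}$ contribution, forcing (via equality in the Cauchy--Schwarz bound $(\operatorname{tr}(A\cdot))^2\leq|A|^2|\cdot|^2$) $T_{k,pq}=\lambda_k A_{pq}$ for scalars $\lambda_k$; total symmetry of $T$ then imposes $\lambda_i\kappa_j = 0$ for $i\neq j$, so $\kappa$ has at most one nonzero component, i.e., $\kappa\in\Cyl_{n-1}$. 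But $\Cyl_{n-1}\cap\Gamma_{m+1}=\emptyset$ when $m<n-1$ (since some $(m+1)$-sum of components vanishes), so $\overline{\Gamma}_0\cap\Cyl_{n-1}=\emptyset$; when $m=n-1$ a direct computation shows $G_1=0$ on $\Cyl_{n-1}$, excluding it from $K_\varepsilon$. Either way $K_\varepsilon$ avoids the kernel, so $\gamma>0$ exists.

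The inequality \eqref{mod_pinching_evol} follows from \eqref{cylindrical_aux_evol} via the quotient rule
\[
(\partial_t-\Delta_F)(G_1^2/G_2) = \tfrac{(\partial_t-\Delta_F)G_1^2}{G_2} - G\,\tfrac{(\partial_t-\Delta_F)G_2}{G_2} + \tfrac{2\langle\nabla G,\nabla G_2\rangle_F}{G_2}\,.
\]
Lemma \ref{pinching_evol} and the chain rule give $(\partial_t-\Delta_F)G_1^2 \leq 2|A|_F^2 G_1^2 - 2|\nabla G_1|_F^2$, while \eqref{cylindrical_aux_evol} controls the $G_2$ piece. The cross term is handled using $\nabla G = (2G_1/G_2)\nabla G_1 - (G/G_2)\nabla G_2$ and Young's inequality, yielding $2\langle\nabla G,\nabla G_2\rangle_F/G_2 \leq 2|\nabla G_1|_F^2/G_2$, which precisely cancels the bad $-2|\nabla G_1|_F^2/G_2$ contribution. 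The surviving good term $-\gamma G|\nabla A|^2/(FG_2)$, combined with $G_2 \leq 2\Theta F$, yields \eqref{mod_pinching_evol} after relabeling $\gamma$.
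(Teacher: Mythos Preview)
Your proposal is correct and follows essentially the same approach as the paper. The kernel analysis is identical in substance: both arguments use that the Hessian of $|A|$ vanishes only in the radial direction to force $T_{k\cdot\cdot}=\lambda_k A$, then use total symmetry to conclude $\kappa\in\Cyl_{n-1}$, and finally rule this out via the $(m+1)$-convexity cone condition (for $m\le n-2$) or the constraint $G\ge\varepsilon F$ (for $m=n-1$). The only cosmetic difference is in the quotient-rule step: the paper writes
\[
(\partial_t-\Delta_F)G = 2\tfrac{G_1}{G_2}(\partial_t-\Delta_F)G_1 - \tfrac{G}{G_2}(\partial_t-\Delta_F)G_2 - \tfrac{2}{G_2}\bigl|\nabla G_1 - \tfrac{G_1}{G_2}\nabla G_2\bigr|_F^2
\]
and simply drops the manifestly nonpositive last term, whereas your Young-inequality manipulation of the cross term is precisely the expansion of that complete square.
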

\begin{proof}
First, we compute  
\begin{align*}
 (\partial_t-\Delta_F) G &= 2\frac{G_1}{G_2}(\partial_t-\Delta_F)G_1 - \frac{G}{G_2}(\partial_t - \Delta_F)G_2\\
 {}&\;\;\;\; - \frac{2}{G_2} \left| \nabla G_1 - \frac{G_1}{G_2} \nabla G_2\right|_F^2\,.
 \end{align*}
By Lemmas \ref{curvature_evol} and \ref{pinching_evol} (and since $G_2 \leq 2\Theta F$) it suffices to show that
\begin{equation}
\label{quadratic_G_2}
Q_{g_2,f}\big|_Z(T) \geq \gamma\frac{|T|^2}{F(Z)}
\end{equation}
for every diagonal $Z \in \sym(n)$ and totally symmetric $T \in \mathbb{R}^n \otimes \mathbb{R}^n \otimes \mathbb{R}^n$. Denote $N(Z):=|Z|+\tr(Z)$. Then $N$ is monotone non-decreasing and convex and hence
\begin{align*}
Q_{g_2, g}\big|_Z(T) &=(\dot g_2^{kl}\ddot f^{pq,rs} - \dot f^{kl}\ddot g_2^{pq,rs})\big|_Z T_{kpq}T_{lrs}\\
&= (-\dot N^{kl}\ddot f^{pq,rs} + \dot f^{kl}\ddot N^{pq,rs})\big|_Z T_{kpq}T_{lrs}\\
&\geq 0\,.
\end{align*}
Suppose that equality occurs. Since $N$ is strictly convex in non-radial directions, each $T_{kpq} $ must then be of the form $a_k Z_{pq}$ for some $a_k \in \mathbb{R}$, so symmetry implies that $T_{kpq} \not = 0$ only when $k=p=q$. Since we can assume $T_{kkk}\neq 0$ for some $k$, this leads to the contradiction
\[T_{kkk} = a_k Z_{kk} = \frac{Z_{kk}}{Z_{ll}}a_k Z_{ll} = \frac{Z_{kk}}{Z_{ll}} T_{kll}=0\]
whenever $k$ and $l$ are distinct indices with $Z_{kk}$ and $Z_{ll}$ both non-zero. This means $Z$ can have at most one non-zero entry. For $m\leq n-2$, this contradicts $Z\in \Gamma_0\Subset \Gamma_{m+1}$. For $m=n-1$, it contradicts $g(z)\geq \varepsilon f(z)$. It follows that $Q_{g_2, f}\big|_Z(T)$ attains a positive minimum on the compact set $\{(Z,T) \in \Gamma_{0} \times \mathbb{R}^n\odot \mathbb{R}^n \odot \mathbb{R}^n: |Z|, |T| = 1\}$, which we set equal to $\gamma$. The claim then follows from the homogeneity of $Q_{g_2, f}$ in $Z$ and $T$.
\end{proof}

\begin{proof}[Proof of Theorem \ref{cylindrical_ests}]

We will use Stampacchia iteration to bound the function $G_{\sigma} := (G - \varepsilon F)F^{\sigma-1}$ for some $\sigma \in (0, \frac{1}{2})$ and any $\varepsilon >0$. This suffices to prove the theorem: Fix $\eta>0$ and suppose that $\kappa_n-c_m F \geq \eta F$. If $G_{\sigma} \leq C$ then, by the convexity and monotonicity properties of $\varphi$, we can estimate
\begin{align*}
\kappa_n-c_m F \leq \frac{\eta}{\varphi(-\eta)}G_1\leq \frac{2\Theta \eta}{\varphi(-\eta)^2} G \leq\frac{2\Theta \eta}{\varphi(-\eta)^2}  (\varepsilon F + CF^{1-\sigma})\,.
\end{align*}
Choosing $\varepsilon = \varepsilon (n ,f, \Gamma_0, \eta)$ small enough and applying Young's inequality, we then obtain
\[\kappa_n - c_m F \leq \eta F + C_\eta\]
as required.  

The first step is to establish an $L^p$ estimate for $G_{\sigma, +} := \max\{G_\sigma, 0\}$. 
\begin{proposition}
\label{cyl_Lp}
There is a constant $\ell=\ell(n,f,\Gamma_0,\varepsilon)>0$ such that
\[\frac{d}{dt} \int G^p_{\sigma, +}\,d\mu \leq 0
\]
for $p\geq \ell^{-1}$ and $\sigma\leq \ell p^{-\frac{1}{2}}$. 
\end{proposition}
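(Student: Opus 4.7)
The plan is to execute a Stampacchia-type $L^p$ estimate in the manner of Huisken--Sinestrari: derive an evolution inequality for $G_\sigma$, test against $pG_{\sigma,+}^{p-1}$, integrate by parts, and absorb the dangerous $|A|^2$ term via the Poincar\'e-type inequality of Lemma \ref{lem:Poincare}. The quadratic condition $\sigma \leq \ell p^{-1/2}$ will emerge from balancing gradient budgets.

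First, on $\{G>\varepsilon F\}$, we subtract $\varepsilon$ times $(\partial_t - \Delta_F)F = |A|_F^2 F$ from \eqref{mod_pinching_evol} to obtain
\[
(\partial_t - \Delta_F)(G - \varepsilon F) \leq |A|_F^2(G - \varepsilon F) - \gamma G \frac{|\nabla A|^2}{F^2}.
\]
Multiplying by $F^{\sigma-1}$ and using the chain rule
\[
(\partial_t - \Delta_F)F^{\sigma-1} = (\sigma-1)F^{\sigma-1}|A|_F^2 - (\sigma-1)(\sigma-2)F^{\sigma-3}|\nabla F|_F^2
\]
---together with $GF^{\sigma-1}\geq G_\sigma$ and $|\nabla F|^2\leq C|\nabla A|^2$---we arrive, after absorbing cross-gradient terms via Young's inequality, at
\[
(\partial_t - \Delta_F)G_\sigma \leq \sigma|A|_F^2 G_\sigma - \gamma' G_\sigma \frac{|\nabla A|^2}{F^2}
\]
on $\{G_\sigma>0\}$ for some $\gamma' = \gamma'(n,f,\Gamma_0,\varepsilon) > 0$. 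The favourable sign $(\sigma-1)(\sigma-2)>0$ for $\sigma<1$ contributes an extra negative gradient term that is crucial in absorbing the cross terms.

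Next we multiply by $pG_{\sigma,+}^{p-1}$, integrate over $M_t$ using \eqref{area_decay}, and integrate by parts in the $\Delta_F$ term. The non-divergence correction $-p\int G_{\sigma,+}^{p-1}\ddot F^{ij,kl}\nabla_j A_{kl}\nabla_i G_{\sigma,+}$ is controlled via $|\ddot F|\leq C/F$ and Young's inequality. Discarding the favourable contribution $-\int G_{\sigma,+}^p FH$ yields
\[
\frac{d}{dt}\int G_{\sigma,+}^p + \tfrac12 p(p-1)\int G_{\sigma,+}^{p-2}|\nabla G_{\sigma,+}|_F^2 + c\gamma' p\int G_{\sigma,+}^p\frac{|\nabla A|^2}{F^2} \leq p\sigma\int G_{\sigma,+}^p|A|_F^2.
\]
Applying Lemma \ref{lem:Poincare} with $u = G_{\sigma,+}^{p/2}$ gives, for every $r>0$,
\[
\int G_{\sigma,+}^p |A|^2 \leq \frac{p^2}{4\gamma_0 r}\int G_{\sigma,+}^{p-2}|\nabla G_{\sigma,+}|^2 + \frac{1+r}{\gamma_0}\int G_{\sigma,+}^p\frac{|\nabla A|^2}{H^2},
\]
where $\gamma_0$ is the constant of that lemma. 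Uniform ellipticity makes $F$ and $H$ comparable, so choosing $r$ proportional to $\sigma p$ balances the two sides: the bad $|A|^2$ term is absorbed into the good gradient terms exactly when $\sigma^2 p \leq \ell^2$ for some $\ell(n,f,\Gamma_0,\varepsilon) > 0$ and $p \geq \ell^{-1}$, giving $\frac{d}{dt}\int G_{\sigma,+}^p \leq 0$.

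The main obstacle is the applicability of Lemma \ref{lem:Poincare}, whose hypothesis requires $\kappa(\supp u)$ to lie in a subcone of $\Gamma\setminus\Cyl$. The pinching $\Gamma_0\Subset\Gamma\cap\Gamma_{m+1}$ combined with $G>\varepsilon F$ on $\supp G_{\sigma,+}$ excludes the cylindrical directions $\Cyl_{m'}$ with $m'\geq m$ (where $\kappa_n\leq c_m F$ so $G_1=0$), but the directions $\Cyl_{m''}$ with $m''<m$ are not ruled out: there $\kappa_n = c_{m''}F > c_m F$ and $G_1$ stays positive. One expects this to be resolved either by induction on $m$, using the cylindrical estimates already available for smaller indices to rule out approach to the lower-index cylinders, or by localising $u$ with a suitable cutoff whose error terms can be controlled using the discarded $-\int G_{\sigma,+}^p FH$ reserve.
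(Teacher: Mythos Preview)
Your overall strategy coincides with the paper's, but the ``main obstacle'' you identify at the end is not an obstacle at all: you have the inequality between the cylindrical constants backwards. Recall that $c_m^{-1}=f(\underbrace{0,\dots,0}_m,1,\dots,1)$ and $f$ is monotone increasing in each argument. For $m''<m$ the vector $(\underbrace{0,\dots,0}_{m''},1,\dots,1)$ has \emph{more} ones, so $c_{m''}^{-1}\geq c_m^{-1}$, i.e.\ $c_{m''}\leq c_m$. Hence at any point of $\Cyl_{m''}$ with $m''\leq m$ we have $\kappa_n=c_{m''}F\leq c_mF$, so $G_1=0$, $G=0$, and $G_\sigma=-\varepsilon F^\sigma<0$. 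Combined with $\Gamma_{m+1}\cap\Cyl_i=\emptyset$ for $i\geq m+1$, this shows that $\kappa\big(\supp G_{\sigma,+}\big)$ is contained in a cone $\Gamma_0'\Subset\Gamma\setminus\Cyl$ (at a normalised distance depending on $\varepsilon$), so Lemma~\ref{lem:Poincare} applies directly. No induction on $m$ and no cutoff is needed; this is exactly how the paper proceeds.

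There is also a minor technical slip in your pointwise inequality for $G_\sigma$. The cross term produced by the product rule is $2(1-\sigma)\langle\nabla G_\sigma,\nabla F/F\rangle_F$, which involves $\nabla G_\sigma$; splitting it by Young's inequality yields a term of the form $|\nabla G_\sigma|_F^2/G_\sigma$ for which there is no pointwise absorber. The paper keeps this cross term and only applies Young's inequality \emph{after} multiplying by $pG_{\sigma,+}^{p-1}$ and integrating, where it is absorbed into the good $p(p-1)\int G_{\sigma,+}^{p-2}|\nabla G_\sigma|_F^2$ term arising from the integration by parts (see \eqref{eq:integration_by_parts}--\eqref{eq:inner_product}). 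With this adjustment and the correct handling of $\Cyl$, your argument is essentially the paper's proof, and the choice $r\sim p^{1/2}$ (equivalently $r\sim\sigma p$ when $\sigma\sim p^{-1/2}$) in Lemma~\ref{lem:Poincare} then closes the estimate precisely under the stated condition $\sigma\leq\ell p^{-1/2}$.
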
 

\begin{proof}
Lemma \ref{lem:cylindrical_goodgrad} provides us with a constant $\gamma(n,f,\Gamma_0,\varepsilon)>0$ such that
\begin{align*}
(\partial_t - \Delta_F) G_{\sigma} \leq{}& \sigma |A|_F^2 G_{\sigma} -2\gamma GF^{\sigma - 1}\frac{|\nabla A|^2}{F^2}  \notag\\
& + \frac{2(1-\sigma)}{F} \langle \nabla G_{\sigma} , \nabla F\rangle_F- \frac{\sigma(1-\sigma)}{F^2}|\nabla F|^2_F  \notag\\
\leq{}&\sigma |A|_F^2G_{\sigma}  -2\gamma G_{\sigma}\frac{|\nabla A|^2}{F^2}+ \frac{2(1-\sigma)}{F} \langle \nabla G_{\sigma} , \nabla F\rangle_F .
\end{align*}
Combined with \eqref{area_decay}, this allows us to estimate
\begin{align}\label{eq:cylindrical_Lp_evolve}
\frac{d}{dt}\int G_{\sigma,+}^p \leq{}& p \int G_{\sigma,+}^{p-1} \Delta_FG_{\sigma}+ \sigma p \int G_{\sigma,+}^p |A|^2_F-2\gamma_1 p \int  G_{\sigma, +}^p \frac{|\nabla A|^2}{F^2}\nonumber\\
{}& + 2(1-\sigma)p\int  G_{\sigma,+}^{p-1}F^{-1} \langle \nabla G_{\sigma}, \nabla F \rangle_F-\int G_{\sigma,+}^p HF.
\end{align}
Since $\kappa_1+\dots+\kappa_{m+1}>0$, the final term is non-positive and can be dropped. Integrating by parts and using Young's inequality, we may estimate, for $p>2$,
\begin{align}\label{eq:integration_by_parts}
p\int G_{\sigma,+}^{p-1} \Delta_F G_{\sigma} \leq{}& - p(p-1)\int \dot F^{ij}G_{\sigma,+}^{p-2} \nabla_j G_{\sigma}\nabla_i G_{\sigma}\notag \\
{}& -p\int G_{\sigma, +}^{p-1}\ddot F^{ij, kl}\nabla_j A_{kl} \nabla_i G_{\sigma} \notag\\
\leq{}& -(p(p-1) - Cp^\frac{3}{2})\int G_{\sigma,+}^{p-2} |\nabla G_{\sigma}|_F^2\notag\\
{}& + Cp^\frac{1}{2} \int G_{\sigma,+}^p \frac{|\nabla A|^2}{F^2}
\end{align}
as long as the constant $C<\infty$ satisfies
\[2C \geq \max \{f(z)|\ddot f|(z): z \in \Gamma_0\}\,.\]
Since, in any orthonormal frame,
\[|\nabla F|^2_F = \dot f^k \dot f^p \dot f^q \nabla_k A_{pp} \nabla_k A_{qq}\,,\]
we may estimate the inner product term by
\begin{align}
\label{eq:inner_product}
2(1-\sigma)p\int G_{\sigma, +}^p\left\langle \frac{\nabla G_{\sigma}}{G_{\sigma}}, \frac{\nabla F}{F} \right\rangle_F  &\leq p^\frac{3}{2} \int G_{\sigma, +}^{p-2}|\nabla G_{\sigma}|_F^2\nonumber\\
{}&\;\;\;\;+Cp^\frac{1}{2}\int G_{\sigma, +}^p \frac{|\nabla A|^2}{F^2}
\end{align}
wherever $G_{\sigma} >0$ as long as $C$ also satisfies
\[
C^{\frac{1}{3}}\geq\max\{ |\dot f|(z)  : z \in \Gamma_{0}\}\,.
\]
Combining \eqref{eq:cylindrical_Lp_evolve}, \eqref{eq:integration_by_parts} and \eqref{eq:inner_product}, we obtain
\begin{align*}
\frac{d}{dt}\int G_{\sigma,+}^p &= -(p(p-1)-(C+1)p^\frac{3}{2})\int G_{\sigma,+}^{p-2} |\nabla  G_{\sigma}|_F^2\\
&\;\;\;\; - 2(\gamma p - Cp^\frac{1}{2}) \int G_{\sigma,+}^p \frac{|\nabla A|^2}{F^2}+\sigma p \int G_{\sigma,+}^p |A|_F^2\,.
\end{align*}
Thus, assuming $\gamma\leq\frac{1}{2}$, we can estimate
\begin{align}
\label{cyl_Lp_prel}
\frac{d}{dt} \int G_{\sigma,+}^p &\leq -\gamma p^2\int G_{\sigma,+}^{p-2} |\nabla  G_{\sigma}|_F^2 \notag - \gamma p \int G_{\sigma,+}^p \frac{|\nabla A|^2}{F^2}\notag \\
&\;\;\;\; +\sigma p \int G_{\sigma,+}^p |A|_F^2
\end{align}
for $p\geq \ell^{-1}(\gamma,C)=\ell^{-1}(n,f,\Gamma_0,\varepsilon)$ sufficiently large.

Next, observe that $\Gamma_{m+1} \cap \Cyl_i $ is empty for every $m+1 \leq i \leq n$. On the other hand, if $\kappa (x,t) \in \Cyl_i$ for some $0\leq i\leq m$ then $\kappa_n(x,t) \leq c_m F(x,t)$. We conclude that the support of $G_{\sigma, +}$ is compactly contained away from $\Cyl$, at a normalized distance dependent on $\varepsilon$. This allows us to apply the Poincar\'{e} inequality, Lemma \ref{lem:Poincare}, with $u^2 = G_{\sigma,+}^p$ and $r = p^\frac{1}{2}$. Estimating $F\leq C(n,f,\Gamma_0)H$, and assuming $\gamma=\gamma(n,f,\Gamma_0,\varepsilon)>0$ is sufficiently small, we obtain
\begin{align}
\label{cylindrical_poinc}
\gamma \int G_{\sigma, +}^p |A|^2 \leq p^\frac{3}{2}\int G_{\sigma,+}^{p-2}|\nabla G_{\sigma}|^2+p^\frac{1}{2}\int G_{\sigma,+}^p\frac{|\nabla A|^2}{F^2}\,.
\end{align}
Substituting \eqref{cylindrical_poinc} into \eqref{cyl_Lp_prel} yields
\begin{align}\label{cyl_Lp_ancient}
\frac{d}{dt} \int G_{\sigma,+}^p &\leq -\left(\gamma^2p^\frac{1}{2}-\sigma p\right) \int |A|_F^2 G_{\sigma, +}^p\,.
\end{align}
for $p\geq \ell^{-1}(n,f,\Gamma_0,\varepsilon)$. The claim follows.
\end{proof}
The Stampacchia iteration argument leading to an upper bound for $G_{\sigma}$ for some $\sigma>0$ now proceeds as in \cite{Huisk84} (see also \cite[Section 5]{ALM14}, where the argument is applied to one-homogeneous fully nonlinear speeds).

\end{proof}

\section{Inscribed curvature pinching}
\label{sec_insc}

In this section, we apply the the cylindrical estimates \eqref{cylindrical_ests} to prove Theorem \ref{inscribed_improves}. We will first prove the estimate for $m\leq n-2$ since the case $m=n-1$ is more subtle (although the proof in the latter case also works for $m\leq n-2$).

\subsection{Case 1: $m\leq n-2$}

We first set $G_1 := \max\{\overline k - c_m F,0\}$ and $G:=G_1^2/G_2$, where again $G_2 := 2\Theta F-H -|A|$ with $\Theta=\Theta(n,f,\Gamma_0)$ chosen so that $\Theta F \geq H + |A|$ on $M \times [0,T)$. We also impose the condition $\Theta \geq \Lambda$, so that $G_1 \leq G_2$. The inequalities \eqref{insc_evol} and \eqref{cylindrical_aux_evol} then imply that
\begin{align*}
 (\partial_t-\Delta_F) G &\leq  2\frac{G}{G_1}(\partial_t-\Delta_F)G_1 - \frac{G}{G_2}(\partial_t - \Delta_F)G_2 \notag \\
 &\leq |A|_F^2 G - 4\frac{G}{G_1}\langle \nabla \overline k , \mathcal S(\nabla \overline k)\rangle_F   -\gamma \frac{G}{G_2}\frac{|\nabla A|^2}{F}
\end{align*}
distributionally on the set $\oversl U := \{ (x,t) \in M \times (0,T) : \overline k (x,t) > \kappa_n (x,t)\}$. Since $\bar k - \kappa_i \leq n\bar k - H \leq n\Lambda F$ we can estimate the eigenvalues of $\mathcal S = (\overline k I- A)^{-1}$ from below to obtain
\begin{equation}
\label{insc_G_evol}(\partial_t - \Delta_F)G \leq |A|^2G - \gamma G\left(\frac{|\nabla \overline k|^2}{F^2} + \frac{|\nabla A|^2}{F^2} \right),
\end{equation}
where $\gamma>0$ may now depend on $n$, $f$, $\Gamma_0$ and $\Lambda$. 

Fix any $\varepsilon >0$. Since $\min_{M\times\{t\}}F$ is non-decreasing, Theorem \eqref{cylindrical_ests} allows us to estimate
\begin{equation}\label{usingknest}
\kappa_n - c_m F \leq \frac{\varepsilon}{2} F + K \min\{1, F\}
\end{equation}
for some $K < \infty$ depending only on $n$, $f$, $\Gamma_0$, $M_0$ and $\varepsilon$. We will use Stampacchia iteration to show that, for some small $\sigma>0$, the function
\begin{equation}
\label{insc_pinching}
G_{\sigma} := (G-\varepsilon F) F^{\sigma - 1} -K
\end{equation}
can be bounded purely in terms of $n$, $f$, $\Gamma_0$, $\varepsilon$ and the initial data, thereby proving the theorem. Since $G_1 \leq G_2$, \eqref{usingknest} yields (cf. \cite{Brend15})
\begin{align*}
\overline k - \kappa_n &\geq G_1 - \frac{\varepsilon}{2}F - K \min\{1,F\}\\
		&\geq G - \frac{\varepsilon}{2} F - K \min\{1, F\}\\
		&\geq F^{1-\sigma}(G_{\sigma} + K)+\frac{\varepsilon}{2}F - K \min\{1, F\}\\
		&\geq G_\sigma F^{1-\sigma} + \frac{\varepsilon}{2}F
\end{align*}
so that, whenever $G_{\sigma} \geq 0$,
\[\overline k - \kappa_n \geq \frac{\varepsilon}{2}F.\]
Thus, $G_{\sigma,+}:=\max\{G_\sigma,0\}$ is supported in $\overline U$, and hence
\begin{align*}
(\partial_t - \Delta_F) G_\sigma &\leq \sigma |A|^2_F (G_{\sigma} +K)- \gamma G_{\sigma}\left(\frac{|\nabla \overline k|^2}{F^2} + \frac{|\nabla A|^2}{F^2}\right)\\
&\;\;\;\; + 2(1-\sigma)\left\langle \nabla G_{\sigma}, \frac{\nabla F}{F}\right\rangle_F
\end{align*}
distributionally in $\supp(G_{\sigma,+})$. 

As for the cylindrical estimates, the first step in the iteration argument is to prove an $L^p$ estimate for $G_{\sigma,+}$.

\begin{proposition}
\label{insc_Lp_flags}
There is a constant $\ell=\ell(n,f,\Gamma_0,\Lambda,\varepsilon)$ such that
\[\frac{d}{dt} \int G_{\sigma, +}^p\leq
\sigma  K^p \int |A|_F^2 \]
for $p\geq \ell^{-1}$ and $\sigma\leq \ell p^{-\frac{1}{2}}$. 
\end{proposition}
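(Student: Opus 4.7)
The approach closely mirrors that of Proposition \ref{cyl_Lp}, with two essential modifications. First, the Poincar\'e inequality of Lemma \ref{lem:Poincare} is replaced by a Poincar\'e-type estimate derived from the distributional inscribed identity \eqref{insc_ident}. Second, the additive $-K$ in the definition \eqref{insc_pinching} of $G_\sigma$ will, via Young's inequality, produce the $\sigma K^p \int |A|_F^2$ residual on the right-hand side.

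Using the distributional evolution inequality for $G_\sigma$ displayed just before the proposition together with \eqref{area_decay}, I would first compute
\begin{align*}
\frac{d}{dt}\int G_{\sigma,+}^p \leq{}& p\int G_{\sigma,+}^{p-1}\Delta_F G_\sigma + \sigma p\int G_{\sigma,+}^{p-1}(G_\sigma+K)|A|_F^2\\
&{} - \gamma p\int G_{\sigma,+}^p\frac{|\nabla \overline k|^2 + |\nabla A|^2}{F^2}\\
&{} + 2(1-\sigma)p\int G_{\sigma,+}^{p-1}F^{-1}\langle \nabla G_\sigma,\nabla F\rangle_F - \int G_{\sigma,+}^pFH\,,
\end{align*}
where the last term is non-positive because $\kappa\in\Gamma_{m+1}\subset\Gamma_n$. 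Integrating the Laplacian by parts and dispatching the resulting $\ddot F$-commutator and $\nabla F$-cross terms by Young's inequality with parameter $\sim p^{1/2}$ as in \eqref{eq:integration_by_parts}--\eqref{eq:inner_product} yields, for $p$ sufficiently large,
\begin{equation*}
\frac{d}{dt}\int G_{\sigma,+}^p \leq -\gamma p^2 \int G_{\sigma,+}^{p-2}|\nabla G_\sigma|_F^2 - \gamma p\int G_{\sigma,+}^p\frac{|\nabla A|^2 + |\nabla \overline k|^2}{F^2} + \sigma p\int G_{\sigma,+}^{p-1}(G_\sigma+K)|A|_F^2\,.
\end{equation*}

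The main new step is the Poincar\'e-type estimate for the final ``bad'' term. On $\supp G_{\sigma,+}$ one has $\overline k - \kappa_n \geq \tfrac{\varepsilon}{2}F$, so $\mathcal{S} = (\overline k I - A)^{-1}$ is positive definite with $|\mathcal{S}| \leq 2(\varepsilon F)^{-1}$. Multiplying \eqref{insc_ident} by $G_{\sigma,+}^pF$, integrating by parts on the divergence term, using $|A|_F^2\lesssim FH$ (valid since $\kappa\in\Gamma_0\Subset\Gamma$), and applying Young's inequality with parameter $p^{1/2}$ to the arising cross product $\int G_{\sigma,+}^{p-1}|\nabla G_{\sigma,+}|F^{-1}|\nabla \overline k|$, I would obtain
\begin{equation*}
\int G_{\sigma,+}^p |A|_F^2 \leq C p^{3/2}\int G_{\sigma,+}^{p-2}|\nabla G_{\sigma,+}|_F^2 + C p^{1/2}\int G_{\sigma,+}^p\frac{|\nabla A|^2+|\nabla \overline k|^2}{F^2}\,,
\end{equation*}
with $C$ depending on $n$, $f$, $\Gamma_0$, $\Lambda$, and $\varepsilon$. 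Splitting the $K$-contribution via $\sigma pK\int G_{\sigma,+}^{p-1}|A|_F^2\leq \sigma(p-1)\int G_{\sigma,+}^p|A|_F^2 + \sigma K^p\int |A|_F^2$, substituting the Poincar\'e-type inequality, and choosing $\sigma\leq \ell p^{-1/2}$ with $\ell$ small enough so that the resulting $\sigma p^{5/2}$ and $\sigma p^{3/2}$ prefactors are absorbed by the dissipative $\gamma p^2$ and $\gamma p$ terms, delivers the conclusion.

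The delicate point is the derivation of the Poincar\'e-type estimate with precisely these powers of $p$: introducing the balancing parameter $r=p^{1/2}$ at the right place in Young's inequality is what forces the iteration to close at the critical scaling $\sigma\sim p^{-1/2}$, in exact parallel with Proposition \ref{cyl_Lp}.
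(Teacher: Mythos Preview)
Your proposal is correct and follows essentially the same approach as the paper: you reproduce the analogue of \eqref{insc_Lp_prelim} via integration by parts and Young's inequality, derive the Poincar\'e-type inequality from \eqref{insc_ident} exactly as in Lemma \ref{insc_poinc}, and split the $K$-contribution via Young's inequality just as the paper does. The only cosmetic difference is the direction of substitution---the paper uses the Poincar\'e inequality to convert the good gradient terms into a $-a\gamma p^{1/2}\int |A|_F^2 G_{\sigma,+}^p$ term (arriving at \eqref{insc_Lp_ancient1}), whereas you convert the bad $|A|_F^2$ term into gradient terms and absorb; both close at $\sigma\lesssim p^{-1/2}$.
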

\begin{proof}
The same arguments used to obtain \eqref{cyl_Lp_prel} provide us with a positive constant $a=a(n,f,\Gamma_0,\Lambda)$ such that
\begin{align}
\label{insc_Lp_prelim}
\frac{d}{dt} \int G_{\sigma,+}^p  &\leq -ap^2\int G_{\sigma, +}^{p-2} |\nabla G_{\sigma}|^2 -ap\int G_{\sigma, +}^p \frac{|\nabla A|^2}{F^2} \notag\\
&\;\;\;\; - ap\int G_{\sigma,+}^p \frac{|\nabla \overline k|^2}{F^2} +\sigma p\int G_{\sigma,+}^{p-1}|A|_F^2(G_{\sigma} +K) 
\end{align}
for $p$ sufficiently large. We then use Young's inequality to estimate the final term by
\begin{align*}
\sigma p \int G_{\sigma, +} ^{p-1} |A|_F^2 (G_{ \sigma}+K)\leq{}& \sigma \int |A|_F^2 (pG_{\sigma, +} ^p + K^p +(p-1)G^p_{ \sigma, +}) \\
\leq{}& 2\sigma p \int G_{\sigma, +} ^p |A|_F^2  + \sigma K^p \int |A|_F^2
\end{align*}
and use the inequality \eqref{insc_ident} to control the remaining bad term:
\begin{lemma}[Cf. \cite{Lan16}]
\label{insc_poinc}
There is a constant $\gamma=\gamma(n,f,\Gamma_0,\Lambda,\varepsilon)>0$ such that
\begin{align*}
\gamma \int G_{\sigma, +}^p|A|_F^2 \leq{}& p^\frac{3}{2} \int G_{\sigma, +}^{p-2} |\nabla G_{\sigma}|^2  +p^\frac{1}{2}\int G^p_{\sigma, +}\frac{|\nabla \overline k|^2}{F^2}+\int G_{\sigma, +}^p \frac{|\nabla A|^2}{F^2}
\end{align*}
for $p\geq 2$.
\end{lemma}
\begin{proof}
We fix $\gamma = \gamma(n,f,\Gamma_0)>0$ so that $2\gamma |A|_F^2 \leq FH$, and use \eqref{insc_ident} to estimate
{\small
\begin{align}
\label{eq:insc_poinc1}
\gamma \int G^{p}_{\sigma, +} |A|_F^2 \leq{}& \frac{1}{2} \int G_{\sigma, +}^p FH \notag \\
		\leq{}& \int G_{\sigma, +}^p F \Big(\Div (\mathcal S^2 \nabla \overline k ) - \langle \mathcal S, \nabla_{\mathcal S^2 \nabla \overline k}A\rangle +\frac{1}{2} |\mathcal S\nabla k|^2 \tr(\mathcal S)\Big)\,. 
\end{align}} \hspace{-1.5mm}
Wherever $G_{\sigma} \geq 0$, we can estimate $|\mathcal S|^2 \leq C(n,f,\Lambda,\varepsilon)F^{-2}$. Hence, integrating by parts,
\begin{align*}
\int G_{\sigma, +}^p F \Div (\mathcal S^2 \nabla \overline k ) &\leq -\int G_{\sigma, +}^p F^2 \left\langle p\frac{\nabla G_{\sigma}}{G_{\sigma}} + \frac{\nabla F}{F}, \frac{\mathcal S^2 \nabla \overline k}{F}\right\rangle\\
& \leq \int G_{\sigma, +}^p \left( p \frac{|\nabla G_\sigma|}{G_\sigma} + \frac{|\nabla F|}{F}\right)\frac{|\nabla \overline k|}{F}.
\end{align*}
Estimating the remaining terms on the right of \eqref{eq:insc_poinc1}, we obtain
\begin{align*}
\gamma \int G^{p}_{\sigma, +}   |A|_F^2 &\leq C\int  G_{\sigma,+}^p \left(p \frac{|\nabla G_{\sigma}|}{G_\sigma} \frac{|\nabla \overline k|}{F} + \frac{|\nabla F|}{F}\frac{|\nabla \overline k|}{F} \right.\\
&\hspace{4cm}\left. + \frac{|\nabla \overline k|}{F} \frac{|\nabla A|}{F} + \frac{|\nabla \overline k|^2}{F^2}\right).
\end{align*}
The result then follows from $|\nabla F| \leq C(n,f,\Gamma_0)|\nabla A|$ and Young's inequality.
\end{proof}
Combining Lemma \ref{insc_poinc} with \eqref{insc_Lp_prelim} yields (for $p$ sufficently large)
\begin{align}\label{insc_Lp_ancient1}
\frac{d}{dt} \int G_{ \sigma, +}^p  &\leq -(a\gamma p^\frac{1}{2}-2\sigma p)\int |A|^2_FG_{\sigma,+}^p +\sigma K^p\int |A|_F^2\,,
\end{align}
where $\gamma$ is the constant from Lemma \ref{insc_poinc}. The claim follows.
\end{proof}

We now estimate $|A|^2_F \leq C(n,f,\Gamma_0)HF$, so that
\begin{align}
\label{insc_unif_Lp}
\frac{d}{dt} \int (G_{\sigma, +}^p + \sigma K^p C) &\leq \sigma K^p \int |A|^2_F - \sigma K^p C\int HF \leq 0.
\end{align}
This yields a uniform bound for the $L^2$ norm of $G_{\sigma, +}^{\frac{p}{2}}$ in terms of $n$, $f$, $\Gamma_0$, $M_0$, $\varepsilon$, $\sigma$, and $p$. This suffices to apply the Stampacchia iteration argument.

\subsection{Case 2: $m=n-1$}

We again set $G:=G_1^2/G_2$, where $G_1:=\max\{\overline k-c_{n-1}F,0\}$ and $G_2:=2\Theta F-H-|A|$ with $\Theta=\Theta(n,f,\Gamma_0,\Lambda)$ chosen so that $G_2\geq \max\{G_1,\Theta F\}$. Then, just as in Case 1,
\begin{align*}
(\partial_t-\Delta_F)G\leq |A|^2_FG-4\frac{G}{G_1}\left\langle \mathcal{S}(\nabla\overline k),\nabla\overline k\right\rangle_F-\frac{G}{G_2}Q_{G_2,F}(\nabla A)
\end{align*}
distributionally on the set $\oversl U := \{ (x,t) \in M \times (0,T) : \overline k (x,y) > \kappa_n (x,t)\}$. Setting $G_\sigma:=(G-\varepsilon F)F^{\sigma-1}-K$ and $G_{\sigma,+}:=\max\{G_\sigma,0\}$ with $K$ chosen as before, this yields
{\small\begin{align*}
(\partial_t-\Delta_F)G_\sigma\leq{}& \sigma |A|^2_F(G_{\sigma}+K)-GF^{\sigma-1}\left(4\left\langle \mathcal{S}(\nabla\overline k),\frac{\nabla\overline k}{G_1}\right\rangle_F+\frac{Q_{G_2,F}(\nabla A)}{G_2}\right)\\
{}&+2(1-\sigma)\left\langle\nabla G_\sigma,\frac{\nabla F}{F}\right\rangle_F-\sigma(1-\sigma)(G_\sigma+K)\frac{|\nabla F|^2_F}{F^2}\\
\leq{}&\sigma |A|^2_F(G_{\sigma}+K)-4GF^{\sigma-1}\left\langle \mathcal{S}(\nabla\overline k),\frac{\nabla\overline k}{G_1}\right\rangle_F - G_\sigma  \frac{Q_{G_2,F}(\nabla A)}{G_2} \\
{}&+2(1-\sigma)G_\sigma\left\langle\frac{\nabla G_\sigma}{G_\sigma},\frac{\nabla F}{F}\right\rangle_F- \frac{\sigma}{2}G_\sigma\frac{|\nabla F|^2_F}{F^2}
\end{align*}}\hspace{-1.5mm}
distributionally in $\mathrm{spt}(G_{\sigma,+})\subset \oversl U$ for any $\sigma\in(0,\frac{1}{2})$. 

We will use the good final term to get a good gradient of curvature term:
\begin{lemma}
\label{lem:insc_goodgrad}
There is a constant $\gamma=\gamma(n,f,\Gamma_0)>0$ such that
\[
\frac{Q_{G_2,F}(\nabla A)}{G_2}+\frac{\sigma}{2}\frac{|\nabla F|^2_F}{F^2}\geq 4\gamma\sigma \frac{|\nabla A|^2}{F^2}
\]
for any $\sigma\in (0,1)$.
\end{lemma}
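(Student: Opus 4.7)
The plan is to reduce the inequality, by scale invariance and compactness, to the algebraic claim that the quadratic forms $Q_{G_2,F}(T;Z)$ and $|\nabla F|^2_F(T;Z)$ cannot vanish simultaneously, and then to verify the latter by direct calculation. Both sides of the desired inequality are $(-2)$-homogeneous in $Z$ and quadratic in $T := \nabla A$, and Lemma \ref{lem:cylindrical_goodgrad} already guarantees $Q_{G_2,F}(T) \ge 0$. Using $\sigma \le 1$ and $Q_{G_2,F} \ge 0$ I first note that $Q_{G_2,F}/G_2 \ge \sigma Q_{G_2,F}/G_2$, so the lemma will follow from the $\sigma$-free estimate
\[
\frac{Q_{G_2,F}(T)}{G_2} + \frac{1}{2}\frac{|\nabla F|^2_F(T)}{F^2} \ge 4\gamma\,\frac{|T|^2}{F^2}
\]
for all diagonal $Z \in \overline\Gamma_0$ and all totally symmetric three-tensors $T$ (rotation invariance lets me diagonalise $Z$). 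After normalising $F(Z) = |T| = 1$ the configuration space becomes compact and both quadratic forms are continuous, so the bound reduces to ruling out simultaneous vanishing.

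The next step is to analyse the kernel of $Q_{G_2,F}$. Revisiting the proof of Lemma \ref{lem:cylindrical_goodgrad}, $Q_{G_2,F}(T;Z) = 0$ forces $T_{kpq} = a_k z_p \delta_{pq}$ in the diagonalising frame, and imposing total symmetry yields $a_k z_p = 0$ for all $k \ne p$. If $Z$ has two or more non-zero eigenvalues --- the regime relevant to Case 1 --- this immediately gives $T = 0$, and the desired inequality holds without any help from the gradient term. The genuinely new case, which is what forces Case 2 to be split off, is $Z$ of rank one: after relabelling, $Z = z\, e_n \otimes e_n$ with $z > 0$ (so $Z \in \Cyl_{n-1}$, which is permitted by $\Gamma_0 \Subset \Gamma \subset \Gamma_n$), and the kernel of $Q_{G_2,F}$ is then one-dimensional, spanned by the totally symmetric tensor $T^{\ast}$ with $T^{\ast}_{nnn} = 1$ and all other components zero.

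I will then compute $|\nabla F|^2_F$ on this kernel. In the diagonalising frame $\nabla_k F = \sum_p \dot f^p(Z)\, T^{\ast}_{kpp}$, which equals $\dot f^n(Z)$ when $k = n$ and vanishes otherwise, so
\[
|\nabla F|^2_F(T^{\ast}; Z) = \dot f^n(Z)\bigl(\dot f^n(Z)\bigr)^2 = \bigl(\dot f^n(Z)\bigr)^3.
\]
Uniform ellipticity ($\overline\Gamma_0 \cap S^n \subset \Gamma$ and $\dot f^i > 0$ on $\Gamma$) provides a positive lower bound for $\dot f^n$ on $\overline\Gamma_0$, so $|\nabla F|^2_F(T^{\ast}; Z)$ is bounded away from zero. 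Together with the previous paragraph this rules out simultaneous vanishing of $Q_{G_2,F}$ and $|\nabla F|^2_F$, and the compactness argument produces a constant $\gamma = \gamma(n, f, \Gamma_0) > 0$ as required.

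The main obstacle --- and the reason Case 2 is not subsumed by Lemma \ref{lem:cylindrical_goodgrad} --- is that the support condition $G > \varepsilon F$ on $\supp G_{\sigma,+}$ only yields a lower bound on $\overline k$ (essentially $\overline k \ge (c_{n-1} + c\sqrt{\varepsilon})F$) and places no pointwise restriction on the shape operator $Z$; in particular, it cannot be used to exclude rank-one $Z$ as the analogous assumption $\Gamma_0 \Subset \Gamma_{m+1}$ does for $m \le n-2$ in Case 1. The substance of the lemma is thus to replace that algebraic exclusion by the genuinely new good-gradient term $|\nabla F|^2_F/F^2$, which is precisely why $G$ was defined as $G_1^2/G_2$ rather than $G_1$: the quotient structure is what causes this term to appear in the evolution of $G_\sigma$ with coefficient $\sigma/2$, and without it the Stampacchia iteration in Case 2 would not close.
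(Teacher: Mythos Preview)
Your proof is correct and follows essentially the same route as the paper's: reduce to the $\sigma=1$ case using $Q_{G_2,F}\ge 0$ and $\sigma\le 1$, normalise and invoke compactness, observe (via the equality analysis of Lemma~\ref{lem:cylindrical_goodgrad}) that $Q_{G_2,F}(T;Z)$ can only vanish on a nonzero $T$ when $Z$ has rank one and $T$ is supported on the single component $T_{nnn}$, and then check directly that $|\nabla F|^2_F=(\dot f^n)^3>0$ there. One small remark on your closing commentary: the good term $\frac{\sigma}{2}|\nabla F|^2_F/F^2$ in the evolution of $G_\sigma$ actually arises from the factor $F^{\sigma-1}$, not from the quotient $G_1^2/G_2$; the quotient is what produces the $Q_{G_2,F}/G_2$ term, and it is the \emph{combination} of the two that closes the argument in Case~2.
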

\begin{proof}
Since $G_2\leq 2\Theta F$, it suffices to prove that
\begin{align*}
\mathcal{Q}(Z,T):={}&\sigma^{-1}\Theta^{-1}f(Z)Q_{g_2,f}\big|_{Z}(T)+\dot f^{kl}|_{Z}\dot f^{pq}|_{Z}\dot f^{rs}|_{Z}T_{kpq}T_{lrs}\\
\geq{}& 4\gamma(n,f,\Gamma_0)>0
\end{align*}
for all $(Z,T)\in \{(Z,T)\in\Gamma_0\times(\R^n\odot\R^n\odot\R^n):|Z|,|T|=1\}$ and $\sigma\in(0,1)$. In fact, it suffices to prove this when $\sigma=1$ since, for $\sigma<1$,
\[
\min_{|Z|=|T|=1}\mathcal{Q}(Z,T)\geq \min_{|Z|=|T|=\sigma=1}\mathcal{Q}(Z,T)
\]
and the right hand side depends only on $n$, $f$ and $\Gamma_0$. 

As in the proof of Lemma \ref{lem:cylindrical_goodgrad}, the first term is non-negative and can only vanish if $T$ has exactly one non-zero component, $T_{nnn}$ say, in which case the second term is
\[
(\dot f^{nn}|_Z)^3|T_{nnn}|^2=(\dot f^{n}|_z)^3>0\,,
\]
where $z$ is the eigenvalue $n$-tuple of $Z$, so $\mathcal{Q}$ is strictly positive as required. 
\end{proof}

Estimating also
\[
\left\langle \mathcal{S}(\nabla\overline k),\frac{\nabla\overline k}{G_1}\right\rangle_F\geq \gamma(n,f,\Gamma_0,\Lambda)\frac{|\nabla \overline k|^2}{F^2}\,,
\]
we obtain
\begin{align*}
(\partial_t-\Delta_F)G_\sigma\leq{}&|A|^2_F(G_{\sigma}+K)-\gamma G_\sigma \frac{|\nabla \overline k|^2}{F^2}-4\gamma \sigma G_\sigma \frac{|\nabla A|^2}{F^2} \nonumber\\
{}&+2(1-\sigma)G_\sigma\left\langle\frac{\nabla G_\sigma}{G_\sigma},\frac{\nabla F}{F}\right\rangle_F\nonumber\\
\leq{}&|A|^2_F(G_{\sigma}+K) -\gamma G_\sigma \frac{|\nabla \overline k|^2}{F^2}-3\gamma \sigma G_\sigma \frac{|\nabla A|^2}{F^2}\nonumber\\
{}&+\frac{C}{\sigma} \frac{|\nabla G_\sigma|_F^2}{G_\sigma}\,,
\end{align*}
where $\gamma>0$ and $C<\infty$ are constants which depend only on $n$, $f$, $\Gamma_0$ and $\Lambda$. Henceforth, $\gamma>0$ will be fixed but $C<\infty$ may additionally depend on $\varepsilon$, and may change value from line to line.

So consider
\begin{align}\label{eq:insc_hard1}
\frac{d}{dt}\int G_{\sigma,+}^p &\leq p\int G_{\sigma,+}^{p-1} \Delta_F G_\sigma   -\gamma p\int G_{\sigma,+}^{p-1}\frac{|\nabla \overline k|^2}{F^2}\notag \\
&\;\;\;\;-3\gamma \sigma p\int G_{\sigma,+}^{p-1}\frac{|\nabla A|^2}{F^2} +C\sigma^{-1}p\int G_{\sigma, +}^{p-2}|\nabla G_\sigma |_F^2\notag \\
&\;\;\;\; + p\int G_{\sigma, +}^{p-1} |A|^2_F(G_\sigma + K)\,.
\end{align}
Integrating the diffusion term by parts and applying Young's inequality yields
\begin{align}\label{eq:insc_hard2}
p\int G_{\sigma,+}^{p-1} \Delta_F G_\sigma={}&-p\int G_{\sigma,+}^{p}\left((p-1)\frac{|\nabla G_\sigma|_F^2}{G_\sigma^2}+\ddot F^{kl,pq}\nabla_kA_{pq}\frac{\nabla_lG_\sigma}{G_\sigma}\right)\nonumber\\
\leq{}&-p\int G_{\sigma,+}^{p}\left((p-1)\frac{|\nabla G_\sigma|_F^2}{G_\sigma^2}-C\frac{|\nabla A|}{F}\frac{|\nabla G_\sigma|}{G_\sigma}\right)\nonumber\\
\leq{}&-p\int G_{\sigma,+}^{p}\left((p-1-C\sigma^{-1})\frac{|\nabla G_\sigma|_F^2}{G_\sigma^2}-\gamma\sigma\frac{|\nabla A|^2}{F^2}\right)\,.
\end{align}
The remaining bad term can be estimated as before by
\begin{align}\label{eq:insc_hard3}
\sigma p\int G_{\sigma,+}^{p-1}|A|^2_F(G_{\sigma}+K)\leq 2\sigma p\int |A|^2_F G_{\sigma,+}^{p}+\sigma K^p\int|A|^2_F.
\end{align}
We recall from the proof of Lemma \ref{insc_poinc} that
\begin{align*}
\int |A|_F^2 G^{p}_{\sigma, +}  &\leq C\int  G_{\sigma,+}^p \left(p \frac{|\nabla G_{\sigma}|}{G_\sigma} \frac{|\nabla \overline k|}{F} + \frac{|\nabla F|}{F}\frac{|\nabla \overline k|}{F} \right.\\
&\hspace{4cm}\left. + \frac{|\nabla \overline k|}{F} \frac{|\nabla A|}{F} + \frac{|\nabla \overline k|^2}{F^2}\right).
\end{align*}
Young's inequality then implies
\begin{align}
\label{eq:insc_hard4}
3\sigma p\int  |A|_F^2G^{p}_{\sigma, +} &\leq C\sigma p\int  G_{\sigma,+}^p \left(p^\frac{3}{2} \frac{|\nabla G_{\sigma}|^2}{G_\sigma^2}+ (1+p^\frac{1}{2}) \frac{|\nabla \overline k|^2}{F^2}\right)\notag \\
&\;\;\;\; + \gamma \sigma p\int G_{\sigma,+}^p\frac{|\nabla A|^2}{F^2} .
\end{align}

Putting \eqref{eq:insc_hard1}, \eqref{eq:insc_hard2}, \eqref{eq:insc_hard3} and \eqref{eq:insc_hard4} together, we obtain
\begin{align*}
\frac{d}{dt}\int G_{\sigma,+}^p\leq{}& -p\left(p-1-C\sigma^{-1}-C\sigma p^{\frac{3}{2}}\right)\int G_{\sigma,+}^{p-2}|\nabla G_\sigma|^2_F\\
{}&-p\left(\gamma-C\sigma (p^{\frac{1}{2}}+1)\right)\int G_{\sigma,+}^{p}\frac{|\nabla \overline k|^2}{F^2}\\
&-\gamma\sigma p\int G_{\sigma,+}^{p}\frac{|\nabla A|^2}{F^2}\\
&+\sigma K^p\int |A|^2_F - \sigma p \int |A|^2_F G_{\sigma, +}^p\,,
\end{align*}
Recalling \eqref{eq:insc_hard4}, we obtain, for $p\gg 1$ and $p^{-1}\lesssim \sigma\lesssim p^{-\frac{1}{2}}$,
\begin{align}\label{insc_Lp_ancient2}
\frac{d}{dt}\int G_{\sigma,+}^p\leq{}& \sigma K^p\int |A|^2_F - \sigma p \int |A|^2_F G_{\sigma, +}^p\,.
\end{align}
Estimating $|A|^2_F \leq C(n,f,\Gamma_0)HF$, this yields
\begin{align*}
\frac{d}{dt} \int (G_{\sigma, +}^p + \sigma K^pC) \leq -\sigma p\int |A|^2_F G_{\sigma,+}^p\,.
\end{align*}

In summary, we have proven the following proposition:
\begin{proposition}\label{prop:insc_hard}
There exist constants $\ell=\ell(n,f,\Gamma_0,\Lambda,\varepsilon)>0$ and $C=C(n,f,\Gamma_0)<\infty$ such that
\begin{align*}
\frac{d}{dt} \int (G_{\sigma, +}^p + \sigma K^pC) \leq{}&-\sigma p\int|A|^2_F G_{\sigma,+}^p
\end{align*}
for almost every $t\in (0,T)$ whenever $p\geq \ell^{-4}$ and $\ell^{-1}p^{-1}\leq \sigma\leq \ell p^{-\frac{1}{2}}$.
\end{proposition}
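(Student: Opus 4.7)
The plan is to assemble the pointwise computations already developed above into a single weighted $L^p$ differential inequality for $G_{\sigma,+}$, and then to exploit \eqref{area_decay} one last time to cancel a harmless zeroth-order remainder. Concretely, I would first combine the barrier-sense evolution inequality \eqref{insc_evol} for $\overline k$, the good-gradient inequality \eqref{cylindrical_aux_evol} for $G_2$, a uniform lower bound on the eigenvalues of $\mathcal S$ (which follows from \eqref{usingknest} and $G_\sigma\geq 0$), and Lemma \ref{lem:insc_goodgrad}, the role of the last being to trade part of the $-\sigma|\nabla F|^2_F/F^2$ term produced by differentiating the factor $F^{\sigma-1}$ against the remainder of $Q_{G_2,F}(\nabla A)/G_2$. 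This yields the distributional inequality on $\supp(G_{\sigma,+})$,
\[
(\partial_t-\Delta_F)G_\sigma\leq |A|^2_F(G_\sigma+K)-\gamma G_\sigma\frac{|\nabla\overline k|^2}{F^2}-3\gamma\sigma G_\sigma\frac{|\nabla A|^2}{F^2}+\frac{C}{\sigma}\frac{|\nabla G_\sigma|^2_F}{G_\sigma}\,,
\]
with $\gamma=\gamma(n,f,\Gamma_0,\Lambda)>0$ fixed and $C=C(n,f,\Gamma_0,\Lambda,\varepsilon)<\infty$.

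Next I would multiply this inequality by $pG_{\sigma,+}^{p-1}$, integrate against $\mu_t$, and apply \eqref{area_decay}. Integrating the Laplacian term by parts produces a favourable $-p(p-1)\int G_{\sigma,+}^{p-2}|\nabla G_\sigma|^2_F$ plus a bad cross term of the form $-p\int G_{\sigma,+}^{p-1}\ddot F^{kl,pq}\nabla_kA_{pq}\nabla_lG_\sigma$, which by Young's inequality is controlled against the good $|\nabla G_\sigma|^2_F$ and $\sigma|\nabla A|^2/F^2$ terms (losing only $C\sigma^{-1}$ and $\gamma\sigma$ respectively in the coefficients). The reaction term $\sigma p\int G_{\sigma,+}^{p-1}|A|^2_F(G_\sigma+K)$ splits by Young into $2\sigma p\int|A|^2_FG_{\sigma,+}^p+\sigma K^p\int|A|^2_F$.

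The main technical step is to control the first of these via Lemma \ref{insc_poinc}, which exchanges $\int|A|^2_FG_{\sigma,+}^p$ for the three gradient integrals $p^{3/2}\int G_{\sigma,+}^{p-2}|\nabla G_\sigma|^2$, $p^{1/2}\int G_{\sigma,+}^p|\nabla\overline k|^2/F^2$, and $\int G_{\sigma,+}^p|\nabla A|^2/F^2$. All three can be absorbed by their fixed good counterparts provided $p\geq\ell^{-4}$ and $\ell^{-1}p^{-1}\leq\sigma\leq\ell p^{-1/2}$ for $\ell=\ell(n,f,\Gamma_0,\Lambda,\varepsilon)>0$ sufficiently small; the delicate balance is between the total cost $\sigma p\cdot p^{3/2}$ produced when Lemma \ref{insc_poinc} is inserted and the good coefficient $p(p-1)-C\sigma^{-1}$, which is also where the coupled constraint between $\sigma$ and $p$ comes from. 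I expect this bookkeeping---tracking three competing gradient terms with coefficients of different fractional powers of $p$ and $\sigma$---to be the main obstacle.

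Finally, only the residual term $\sigma K^p\int|A|^2_F$ remains on the right-hand side. To cancel it, I would use the elementary bound $|A|^2_F\leq C(n,f,\Gamma_0)HF$ together with \eqref{area_decay} applied to the constant function $\sigma K^pC$, whose time derivative along the flow equals $-\sigma K^pC\int HF$. Choosing $C\geq\max_{\Gamma_0}|A|^2_F/(HF)$, adding $\sigma K^pC$ to the integrand on the left-hand side exactly absorbs the residual, yielding the stated inequality.
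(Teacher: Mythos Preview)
Your overall strategy matches the paper's, but there is a genuine gap at the absorption step for the $|\nabla A|^2/F^2$ integral.

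You propose to invoke Lemma~\ref{insc_poinc} as stated, which bounds $\gamma_{\mathrm{P}}\int|A|^2_FG_{\sigma,+}^p$ by the sum of $p^{3/2}\int G_{\sigma,+}^{p-2}|\nabla G_\sigma|^2$, $p^{1/2}\int G_{\sigma,+}^p|\nabla\overline k|^2/F^2$, and $\int G_{\sigma,+}^p|\nabla A|^2/F^2$, where $\gamma_{\mathrm{P}}=\gamma_{\mathrm{P}}(n,f,\Gamma_0,\Lambda,\varepsilon)$ is the Poincar\'e constant. To absorb the reaction term $2\sigma p\int|A|^2_FG_{\sigma,+}^p$ you must multiply through by $2\sigma p/\gamma_{\mathrm{P}}$. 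The first two terms can indeed be absorbed into their good counterparts (of sizes $\sim p^2$ and $\gamma p$) under the regime $\ell^{-1}p^{-1}\leq\sigma\leq\ell p^{-1/2}$, but the third becomes
\[
\frac{2\sigma p}{\gamma_{\mathrm{P}}}\int G_{\sigma,+}^p\,\frac{|\nabla A|^2}{F^2}\,,
\]
while the only available good $|\nabla A|^2$ term in this Case~2 setting is of size $\sim\gamma\sigma p$ (coming from Lemma~\ref{lem:insc_goodgrad}, which yields only a $\sigma$-weighted gradient term, not the unweighted one of \eqref{cylindrical_aux_evol}). Both bad and good coefficients scale exactly like $\sigma p$, so no choice of $p$ or $\sigma$ helps: absorption would require $\gamma\,\gamma_{\mathrm{P}}\gtrsim 1$, which fails since both constants are small.

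The paper's remedy is precisely to \emph{not} use Lemma~\ref{insc_poinc} as a black box here. Instead it returns to the intermediate, pre-Young estimate in that lemma's proof,
\[
\int |A|_F^2\,G^{p}_{\sigma, +}\leq C\int G_{\sigma,+}^p\left(p\,\frac{|\nabla G_{\sigma}|}{G_\sigma}\frac{|\nabla \overline k|}{F}+\frac{|\nabla F|}{F}\frac{|\nabla \overline k|}{F}+\frac{|\nabla \overline k|}{F}\frac{|\nabla A|}{F}+\frac{|\nabla \overline k|^2}{F^2}\right),
\]
and applies Young's inequality with a weight tuned so that the resulting $|\nabla A|^2/F^2$ coefficient is exactly $\gamma\sigma p$ (hence absorbable by construction); the compensating excess is pushed into the $|\nabla\overline k|^2/F^2$ and $|\nabla G_\sigma|^2/G_\sigma^2$ terms, whose good counterparts (of orders $p$ and $p^2$) still have room. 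This is \eqref{eq:insc_hard4}. Once you make this adjustment, the rest of your plan, including the final use of \eqref{area_decay} with $|A|^2_F\leq C(n,f,\Gamma_0)HF$ to cancel the residual $\sigma K^p\int|A|^2_F$, goes through exactly as in the paper.
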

Integrating, we obtain a uniform bound for the $L^2$ norm of $G_{\sigma,+}^{\frac{p}{2}}$ in terms of $n$, $f$, $\Gamma_0$, $M_0$, $\varepsilon$, $\sigma$ and $p$. This suffices to apply the Stampacchia iteration argument.

\section{Exscribed curvature pinching}\label{sec_exsc}

We now turn attention to the proof of Theorem \ref{exscribed_improves}. We begin by setting $G_1 := \max\{-\underline k,0\}$ and $G_2:= 2\Theta F + H -|A|$, where $\Theta=\Theta(n,f,\Gamma_0,\Upsilon)$ is so large that $|A|-H\leq \Theta F$. We also ask that $\Theta \geq \Upsilon$ to ensure $G_1 \leq \Theta F \leq G_2$. Let $G = G_1^2/G_2$. 

A straightforward computation yields (cf. \cite[Proposition 11]{Brend15})
\begin{align}
\label{exsc_identity}
\partial_t G_1 \leq  \langle \nabla G_1, \mathcal T (\nabla F)\rangle + \frac{1}{2}F |\mathcal T(\nabla G_1)|^2 
\end{align}
almost everywhere in $\undersl U$. Setting 
\begin{align*}
\omega &:= \Delta_F G_1 - 2 \langle \nabla G_1 , \mathcal T (\nabla G_1)\rangle_F - \langle \nabla G_1, \mathcal T (\nabla F) \rangle - \frac{1}{2}F |\mathcal T(\nabla G_1)|^2,
\end{align*}
we combine \eqref{exsc_identity} with \eqref{exsc_evol} to obtain
\begin{align*}
(\partial_t - \Delta_F) G_1 &\leq |A|^2_F G_1  - \max\{ \omega + |A|^2_F G_1, 0 \} -2\langle \nabla G_1, \mathcal T(\nabla G_1)\rangle_F ,
\end{align*}
and thus compute
\begin{align}
\label{eq:exsc_Gevol}
(\partial_t - \Delta_F) G & \leq |A|^2_F G  - 2\frac{G}{G_1} \max\{ \omega + |A|^2_F G_1, 0 \} \notag\\
&\;\;\;\; - 4\frac{G}{G_1} \langle \nabla G_1, \mathcal T(\nabla G_1)\rangle_F- \frac{G}{G_2} Q_{G_2, F} (\nabla A),
\end{align}
which holds distributionally on $\undersl U$. Note that, writing $N(Z) = \tr(Z)-|Z|$,
\begin{align*}
Q_{G_2, F}(\nabla A) = (\dot N^{kl} \ddot f^{pq, rs} - \dot f^{kl} \ddot N^{pq,rs}) \nabla_k A_{pq} \nabla_l A_{rs} \geq 0
\end{align*}
since $f$ is convex and $N$ is concave and non-decreasing. 

We now set $G_\sigma := (G-\varepsilon F)F^{\sigma  -1} - K$ and $G_{\sigma, +} := \max\{G_\sigma, 0\}$, where $\varepsilon >0$ and $K = K(n, f, \Gamma_0, M_0, \varepsilon)$ is so large that
\[\kappa_1 \geq -\frac{\varepsilon}{2} F - K\min\{1,F\}.\] 
That such a choice of $K$ is possible was proven in \cite{ALM14}. Note that $G_{\sigma} \geq 0$ implies $G_1 > 0$ and 
\begin{align*}
G_1 + \kappa_1 &\geq G - \frac{\varepsilon}{2}F - K \min \{1, F\}\\
		&\geq  \frac{\varepsilon}{2}F + K F^{1-\sigma} - K\min\{1, F\}\\
		&\geq \frac{\varepsilon}{2}F\,.
\end{align*}
So $\supp(G_{\sigma, +}) \subset \undersl U$. In particular, wherever $G_\sigma$ is positive, \eqref{eq:exsc_Gevol} holds and we have 
{\small\begin{align*}
(\partial_t - \Delta_F)G_\sigma &\leq \sigma |A|^2_F (G_\sigma + K) -2F^{\sigma-1}\frac{G}{G_1}\max\{ \omega + |A|^2_F G_1, 0 \}\\
&\;\;\;\; -GF^{\sigma-1} \left( 4 \left \langle \mathcal T (\nabla G_1) , \frac{\nabla G_1}{G_1}\right \rangle_F + \frac{Q_{G_2, F} (\nabla A)}{G_2}\right) \\
&\;\;\;\; +2(1-\sigma) G_\sigma \left\langle \frac{\nabla G_\sigma}{G_\sigma}, \frac{\nabla F}{F}\right \rangle_F - \sigma(\sigma-1)G_\sigma \frac{|\nabla F|_F^2}{F^2}.
\end{align*}}\hspace{-1.5mm}
Estimating now
\begin{align}
\label{eq:exsc_goodgrad}
4\left \langle\mathcal T(\nabla G_1), \frac{\nabla G_1}{G_1}\right \rangle_F \geq \gamma_1(n,f,\Gamma_0, \Upsilon) \frac{|\nabla \undersl k |^2}{F^2},
\end{align}
we obtain
{\small\begin{align*}
(\partial_t - \Delta_F)G_\sigma &\leq \sigma |A|^2_F (G_\sigma + K) -2\frac{G_\sigma}{G_1}\max\{ \omega + |A|^2_F G_1, 0 \}\\
&\;\;\;\; -\gamma_1 G_\sigma \frac{|\nabla \undersl k|^2}{F^2}+2(1-\sigma) G_\sigma \left\langle \frac{\nabla G_\sigma}{G_\sigma}, \frac{\nabla F}{F}\right \rangle_F\\
&\;\;\;\;  - G_\sigma\left(\frac{Q_{G_2,F}(\nabla A)}{G_2}+\frac{\sigma}{2} \frac{|\nabla F|_F^2}{F^2}\right).
\end{align*}}\hspace{-1.5mm}
Exactly as in Lemma \ref{lem:insc_goodgrad}, the final term can be estimated by
\[-G_\sigma\left(\frac{Q_{G_2,F}(\nabla A)}{G_2} + \frac{\sigma}{2} \frac{|\nabla F|_F^2}{F^2} \right)\leq -5\sigma \gamma_2 G_\sigma \frac{|\nabla A|^2}{F^2},\]
where $\gamma_2 = \gamma_2(n, f, \Gamma_0)$. Applying Young's inequality to the inner product term, we arrive at 
\begin{align*}
(\partial_t - \Delta_F)G_\sigma &\leq \sigma |A|^2_F (G_\sigma + K) -2\frac{G_\sigma}{G_1}\max\{ \omega + |A|^2_F G_1, 0 \}\\
&\;\;\;\; -\gamma_1 G_\sigma \frac{|\nabla \undersl k|^2}{F^2} - 4\sigma\gamma_2 G_\sigma  \frac{|\nabla A|^2}{F^2} + \frac{C}{\sigma} \frac{|\nabla G_\sigma|^2}{G_\sigma},
\end{align*}
where $C = C(n,f,\Gamma_0)$. Finally, since $2\sigma < 1$,
\begin{align*}
- 2\frac{G_\sigma}{G_1}\max\{ \omega + |A|^2_F G_1, 0 \} &\leq - 4\sigma \frac{G_\sigma}{G_1}\max\{ \omega + |A|^2_F G_1, 0 \} \\
		&\leq -4\sigma \frac{G_\sigma}{G_1} \omega - 4\sigma |A|^2_FG_\sigma, 
\end{align*}
so we have 
\begin{align}
\label{eq:exsc_pinchevol}
(\partial_t - \Delta_F)G_\sigma &\leq \sigma |A|^2_F (K - 3G_\sigma) -4\sigma\frac{G_\sigma}{G_1}\omega  -\gamma_1 G_\sigma \frac{|\nabla \undersl k|^2}{F^2} \notag\\
&\;\;\;\; - 4\sigma\gamma_2 G_\sigma  \frac{|\nabla A|^2}{F^2} + \frac{C}{\sigma} \frac{|\nabla G_\sigma|^2}{G_\sigma}
\end{align}
in $\supp(G_{\sigma, +})$, in the sense of distributions. This will suffice to carry out the iteration argument. 

\begin{proposition}
\label{exscribed_Lp}
There exist constants $\ell=\ell(n,f,\Gamma_0,\Upsilon,\varepsilon)>0$ and $C = C(n, f, \Gamma_0)$ such that
\[\frac{d}{dt} \int (G_{\sigma,+}^p+\sigma K^p C) \leq - \sigma p\int |A|^2_FG_{\sigma,+}^p\]
for almost every $t \in (0,T)$ whenever $p\geq \ell^{-4}$ and $\ell^{-1}p^{-1}\leq \sigma \leq \ell p^{-\frac{1}{2}}$. 
\end{proposition}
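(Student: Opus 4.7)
We follow the proof of Proposition \ref{prop:insc_hard} closely, substituting \eqref{eq:exsc_pinchevol} for its inscribed counterpart and $\underline k$ for $\overline k$. The area formula \eqref{area_decay} applied to $\varphi=G_{\sigma,+}^p$ gives
\[
\frac{d}{dt}\int G_{\sigma,+}^p = p\int G_{\sigma,+}^{p-1}\partial_t G_\sigma - \int G_{\sigma,+}^p HF.
\]
Writing $\partial_t G_\sigma = \Delta_F G_\sigma + (\partial_t-\Delta_F)G_\sigma$ and using \eqref{eq:exsc_pinchevol}, we treat the diffusion piece $p\int G_{\sigma,+}^{p-1}\Delta_F G_\sigma$ exactly as in \eqref{eq:insc_hard2}: integration by parts yields the coercive leading term $-p(p-1)\int G_{\sigma,+}^{p-2}|\nabla G_\sigma|_F^2$, together with a non-divergence contribution that Young's inequality splits into a manageable $C\sigma^{-1}p\int G_{\sigma,+}^{p-2}|\nabla G_\sigma|^2$ error and a small multiple of the coercive $-4\sigma\gamma_2 p\int G_{\sigma,+}^p|\nabla A|^2/F^2$ term appearing in \eqref{eq:exsc_pinchevol}.

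The main obstacle is the term $-4\sigma p\int G_{\sigma,+}^{p-1}(G_\sigma/G_1)\,\omega$, since $\omega$ contains the second-order $\Delta_F G_1$. I would integrate this piece by parts as
\[
-4\sigma p\int G_{\sigma,+}^{p-1}\frac{G_\sigma}{G_1}\Delta_F G_1 = 4\sigma p\int \dot F^{kl}\nabla_k\!\left(G_{\sigma,+}^{p-1}\frac{G_\sigma}{G_1}\right)\nabla_l G_1 + 4\sigma p\int G_{\sigma,+}^{p-1}\frac{G_\sigma}{G_1}\ddot F^{kl,pq}\nabla_k A_{pq}\nabla_l G_1.
\]
On $\supp(G_{\sigma,+})$ the pinching $G\geq \varepsilon F$ together with $G_2\leq 2\Theta F$ forces $G_1\geq c(n,f,\Gamma_0,\Upsilon,\varepsilon)F$, so $G_\sigma/G_1$ and its derivatives can be controlled in terms of $F^{-1}$ and $F^{-1}\nabla G_\sigma/G_\sigma$. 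Applying Young's inequality (and $|\nabla F|\lesssim |\nabla A|$), these integrated pieces, together with the zeroth-order parts of $\omega$ estimated using \eqref{eq:exsc_goodgrad}, are bounded by positive multiples of
\[
\sigma^{-1}p\int G_{\sigma,+}^{p-2}|\nabla G_\sigma|^2,\qquad \sigma p^{3/2}\int G_{\sigma,+}^p\frac{|\nabla \underline k|^2}{F^2},\qquad \sigma \gamma_2 p\int G_{\sigma,+}^p\frac{|\nabla A|^2}{F^2},
\]
each of which is absorbed by the corresponding coercive term (namely $-p(p-1)\int G_{\sigma,+}^{p-2}|\nabla G_\sigma|_F^2$, $-\gamma_1 p\int G_{\sigma,+}^p|\nabla \underline k|^2/F^2$, and $-4\sigma\gamma_2 p\int G_{\sigma,+}^p|\nabla A|^2/F^2$) precisely when $p\geq \ell^{-4}$ and $\ell^{-1}p^{-1}\leq \sigma\leq \ell p^{-1/2}$.

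Finally, the curvature remainder $\sigma p\int G_{\sigma,+}^{p-1}|A|_F^2(K-3G_\sigma)$ from \eqref{eq:exsc_pinchevol} is handled as in \eqref{eq:insc_hard3} by Young's inequality,
\[
\sigma p\int G_{\sigma,+}^{p-1}|A|_F^2(K-3G_\sigma) \leq \sigma K^p\int |A|_F^2 - \sigma p\int G_{\sigma,+}^p|A|_F^2,
\]
producing the desired $-\sigma p\int G_{\sigma,+}^p |A|_F^2$ on the right-hand side. Estimating $|A|_F^2\leq C(n,f,\Gamma_0)HF$ bounds the inhomogeneous piece by $\sigma K^p C\int HF$, which is exactly cancelled by the area-decay contribution $-\int(\sigma K^p C) HF$ produced by \eqref{area_decay} applied to the constant function $\sigma K^p C$; adding this constant to the integrand yields the claim. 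The principal difficulty lies in the careful bookkeeping of the $\omega$-integration by parts so that every cross-term admits absorption within the scaling window $\ell^{-1}p^{-1}\leq \sigma\leq \ell p^{-1/2}$, which in turn forces the lower bound $p\geq \ell^{-4}$ ensuring non-emptiness of this range.
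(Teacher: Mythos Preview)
Your proposal is correct and follows essentially the same route as the paper: start from \eqref{eq:exsc_pinchevol}, integrate the diffusion term by parts as in \eqref{eq:insc_hard2}, integrate the $\Delta_F G_1$ piece of $\omega$ by parts using $G_1\gtrsim F$ on $\supp(G_{\sigma,+})$, absorb all cross-terms via Young's inequality into the three coercive quantities, and handle the $K$-remainder exactly as you describe. One small inaccuracy: the $|\nabla G_\sigma|^2$ cross-term coming from the $\omega$ integration by parts carries the coefficient $\sigma p^{5/2}$ rather than $\sigma^{-1}p$ (your claimed pair $\sigma^{-1}p$ and $\sigma p^{3/2}$ cannot arise simultaneously from a single Young split of the $4\sigma p^2$-weighted cross-term); compare \eqref{eq:exsc_fin}. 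This does not affect the outcome, since $\sigma p^{5/2}\lesssim p^2$ precisely when $\sigma\lesssim p^{-1/2}$, so absorption still holds throughout the stated range.
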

\begin{proof}
The inequality \eqref{eq:exsc_pinchevol} implies
\begin{align*}
\frac{d}{dt}\int G_{\sigma,+}^p &\leq p \int G_{\sigma,+}^{p-1} \Delta_F  G_{\sigma}  - 4\sigma p \int \frac{G_{\sigma,+}^p}{G_1} \omega -\gamma_1 p \int G_{\sigma,+}^p \frac{|\nabla \undersl k|^2}{F^2}\\
&\;\;\;\; -4\gamma_2 \sigma p \int G_{\sigma, +}^{p}\frac{ |\nabla A|^2}{F^2} +C\sigma^{-1}p\int G_{\sigma,+}^{p-2} |\nabla G_{\sigma, +}|^2\\
 &\;\;\;\; +\sigma p\int G_{\sigma, +}^{p-1}|A|^2_F(K-3G_\sigma)- \int G_{\sigma, +}^p HF.
\end{align*}
We estimate the first term on the right exactly as in \eqref{eq:insc_hard2}, and the last term can be dropped. Wherever $G_\sigma \geq 0$ we have $\kappa_i-\underline k\geq \frac{\varepsilon}{2}F$, so
\begin{align*}
-\omega &\leq -\Delta_F G_1 + \frac{C}{F}\left(|\nabla \undersl k|^2 + |\nabla \undersl k| |\nabla F| \right)\\
&\leq - \Delta_F G_1+ C\frac{|\nabla \undersl k|^2}{F} + \frac{\gamma_2}{4} G_1\frac{|\nabla A|^2}{F^2},
\end{align*}
where $C=C(n,f,\Gamma_0,\Upsilon, \varepsilon)$. Integrating by parts yields
\begin{align*}
- \int \frac{G_{\sigma,+}^p}{G_1} \Delta_F G_1 &\leq - \int G_{\sigma,+}^p \frac{|\nabla \undersl k|^2_F}{G_1^2}\\
&\;\;\;\; + C \int G_{\sigma,+}^p \frac{|\nabla \undersl k |}{G_1}\left(p \frac{|\nabla G_\sigma|}{G_\sigma}+\frac{|\nabla A|}{F} \right)\\
&\leq C\int G_{\sigma, +}^p\left( (1+p^\frac{1}{2})\frac{|\nabla \undersl k|^2}{F^2} + p^\frac{3}{2} \frac{|\nabla G_\sigma|^2}{G_\sigma^2}\right)\\
&\;\;\;\; +\frac{\gamma_2}{4} \int G_{\sigma, +}^p\frac{|\nabla A|^2}{F^2}
\end{align*}
where $C$ is a different constant which depends on $n$, $f$, $\Gamma_0$, $\Upsilon$ and $\varepsilon$. Collecting these estimates, we see that there is a constant $B<\infty$ depending only on $n$, $f$, $\Gamma_0$, $\Upsilon$ and $\varepsilon$ such that
\begin{align}
\label{eq:exsc_fin}
\frac{d}{dt} \int G_{\sigma, +}^p &\leq -(p(p-1) - B(\sigma^{-1} p + \sigma p^\frac{5}{2})\int G_{\sigma, +}^{p-2} |\nabla G_{\sigma}|^2 \notag\\
&\;\;\;\; - (\gamma_1p - B\sigma p (1+ p^\frac{1}{2})) \int G_{\sigma, +}^p \frac{|\nabla \undersl k|^2}{F^2}- \gamma_2 \sigma p \int G_{\sigma, +}^p \frac{|\nabla A|^2}{F^2} \notag \\
&\;\;\;\; + \sigma p\int |A|^2_FG^{p-1}_{\sigma, +}(K-2G_{\sigma}) - \sigma p\int |A|^2_FG_{\sigma,+}^p\,.
\end{align}
Finally, we estimate
\begin{align*}
p G_{\sigma, +}^{p-1} (K - 2G_{\sigma}) &\leq (K^p + (p-1)G_{\sigma, +}^p - 2p G_{\sigma,+}^p ) \leq K^p\,. 
\end{align*}
Taking $\ell$ sufficiently small, and $p$ sufficiently large, the claim follows.
\end{proof}

The remainder of the proof is the same as that of Theorem \ref{inscribed_improves}.

\section{Ancient solutions}

We begin this section by stating some geometric conditions that force an ancient solution of \eqref{eq:CF} to satisfy the decay condition \eqref{eq:volume_decay}, i.e.
\[\int_t^0 \hspace{-2.5mm} \int F \leq C(1-t)^r\quad\text{for all}\quad t<0\]
for constants $C > 0$ and $r \geq \frac{n+1}{2}.$

\begin{lemma}
\label{lem:area_conds}
Let $f:\Gamma\to\R$ be an admissible speed, $X:M\times (-\infty,1)\to \mathbb{R}^{n+1}$ be a solution of \eqref{eq:CF} and suppose that one of the following conditions holds:
\begin{enumerate}[(i)]
\item $X_t$ is an embedding bounding the region $\Omega_t$ and there are constants $C<\infty$ and $r \geq \frac{n+1}{2}$ such that
\begin{equation*}
|\Omega_t| \leq C(1-t)^r
\end{equation*}
for all $t \leq 0$.
\item There are constants $C<\infty$ and $r \geq \frac{n}{2}$ such that $F\leq CH$ and
\begin{equation*}
\mu_t(M) \leq C(1-t)^r
\end{equation*}
for all $t \leq 0$.
\item $X_t$ is an embedding and there is a constant $C<\infty$ such that
\[
\max_{M\times\{t\}}|F|\leq C
\]
for all $t\leq 0$.
\item $\Gamma=\Gamma_+$ and $f$ is inverse-concave\footnote{Note that convex speeds $f$ defined on $\Gamma_+$ are automatically inverse-concave.}.
\item There are constants $p\geq n$ and $C<\infty$ such that $F\leq CH$ and
\[\int H^p \leq C\]
for all $t \leq 0$.
\item There are constants $k \geq 0$ and $C<\infty$ such that $F\leq CH$ and
\[\left\langle \frac{\nabla H}{H}, \frac{\nabla F}{F} \right\rangle \leq \frac{1}{n+k-1}\left (|\AA|^2 + \frac{k}{n(n+k)}H^2 \right)\]
for all $t\leq 0$. 
\item There is a constant $C<\infty$ such that $C^{-1}F\leq H\leq CF$ and
\[
F\leq C(1-t)^{-\frac{1}{2}}
\]
for all $t<0$.
\end{enumerate}
Then $X$ satisfies \eqref{eq:volume_decay}. 
\end{lemma}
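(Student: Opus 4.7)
The overall plan is to set up a chain of reductions in which each condition (ii)--(vii) either directly implies the estimate or reduces to an earlier one, with (i) as the base case. For (i), the first variation of volume under \eqref{eq:CF} gives $\tfrac{d}{dt}|\Omega_t|=-\int_M F\,d\mu_t$; integrating from $t$ to $0$ yields $\int_t^0\!\int F=|\Omega_t|-|\Omega_0|\leq C(1-t)^r$ directly for $r\geq\tfrac{n+1}{2}$.

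For (ii), I would apply Cauchy--Schwarz twice. Spatially, $F\leq CH$ gives $\int F\leq C\mu_t(M)^{1/2}(\int FH)^{1/2}$; temporally, using $\int FH=-\tfrac{d}{dt}\mu_t(M)$ from \eqref{area_decay},
\[
\int_t^0\!\!\int F\leq C\Big(\int_t^0\!\mu_s(M)\,ds\Big)^{1/2}\mu_t(M)^{1/2}\leq C(1-t)^{r+1/2},
\]
and the hypothesis $r\geq n/2$ yields the required exponent. Conditions (iii), (iv), (vii) then reduce to (i). Under (iii), $\partial_tX=-F\nu$ with $|F|\leq C$ forces $M_t$ to lie in a $C(1-t)$-tubular neighbourhood of $M_0$, whence $|\Omega_t|\leq C(1-t)^{n+1}$. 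Under (iv), Andrews' Harnack inequality \eqref{eq:Harnack} applies to the convex (hence embedded) ancient solution; sending $t_0\to-\infty$ gives $\partial_tF\geq A^{-1}(\nabla F,\nabla F)\geq 0$, so $F(\cdot,t)\leq\max_{M_0}F$ for $t\leq 0$, reducing to (iii). Under (vii), integrating the speed bound gives $|X(p,t)-X(p,0)|\leq 2C(1-t)^{1/2}$, hence $\diam(\Omega_t)\leq C(1-t)^{1/2}$ and $|\Omega_t|\leq C(1-t)^{(n+1)/2}$. Condition (v) reduces to (ii): the area evolution together with $F\leq CH$ and H\"older's inequality give $\tfrac{d}{dt}\mu_t(M)\geq -C\mu_t(M)^{1-2/p}\bigl(\int H^p\bigr)^{2/p}\geq -C'\mu_t(M)^{1-2/p}$, which integrates to $\mu_t(M)\leq C(1-t)^{p/2}$ with $p/2\geq n/2$.

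The main obstacle is (vi). The gradient pinching inequality has no immediate geometric interpretation, so the plan is to use it to produce a monotone integral quantity. Since the right side of the hypothesis may be rewritten as $\tfrac{1}{n+k-1}\bigl(|A|^2-\tfrac{H^2}{n+k}\bigr)$, and the factor $n+k-1$ is precisely what appears when integrating $H^{n+k}$ by parts against $\Delta_F$, my candidate is $\Psi=H^{n+k}$ (or possibly $H^{n+k-1}F$). Using \eqref{area_decay}, expanding $(\partial_t-\Delta_F)H=|A|^2_FH+\ddot F^{pq,rs}\nabla^iA_{pq}\nabla_iA_{rs}$ via Lemma \ref{curvature_evol} together with $\dot H^{ij}=g^{ij}$ and $\ddot H=0$, and integrating the diffusion term by parts, one produces curvature terms whose balance is governed by $|A|^2-\tfrac{H^2}{n+k}$, together with a cross term of the form $\int H^{n+k-2}\langle\nabla H,\nabla F\rangle$. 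The critical algebraic check is that the hypothesis in (vi) exactly absorbs this cross term, making $\tfrac{d}{dt}\int H^{n+k}\leq 0$; a uniform bound on $\int H^{n+k}$ then reduces (vi) to (v) with $p=n+k\geq n$.
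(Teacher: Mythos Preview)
Your chain of reductions matches the paper's, and parts (i)--(v) are essentially identical to what the paper does. Two points need correction.

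For (vi) you have the right monotone quantity, $\int H^{n+k}$, and the right reduction to (v), but your computational route via Lemma~\ref{curvature_evol} does not produce the terms you claim. Writing $(\partial_t-\Delta_F)H=|A|^2_FH+\ddot F^{pq,rs}\nabla^iA_{pq}\nabla_iA_{rs}$ and integrating the $\Delta_F H$ term by parts yields $|\nabla H|^2_F$ and $|A|^2_F$ contributions (plus uncontrolled $\ddot F$ and $\nabla_i\dot F^{ij}$ terms), not the $\langle\nabla H,\nabla F\rangle$ and $|A|^2$ terms the hypothesis is tailored to. The paper instead uses the direct evolution $\partial_t H=\Delta F+|A|^2F$: then $\int H^{n+k-1}\Delta F$ integrates by parts cleanly to $-(n+k-1)\int H^{n+k-2}\langle\nabla H,\nabla F\rangle$, and the reaction plus area terms combine to $(n+k)\int H^{n+k-1}F\bigl(|A|^2-\tfrac{1}{n+k}H^2\bigr)$, exactly matching the hypothesis. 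Note also that the sign is reversed: one obtains $\tfrac{d}{dt}\int H^{n+k}\geq 0$, not $\leq 0$; it is non-decrease that gives $\int H^{n+k}\,d\mu_t\leq\int H^{n+k}\,d\mu_0$ for $t\leq 0$.

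For (vii) your reduction to (i) via $|\Omega_t|\leq C(1-t)^{(n+1)/2}$ presupposes that $X_t$ is an embedding bounding a region $\Omega_t$, which (vii) does not assume. The paper reduces (vii) to (ii) instead: using $H\leq CF$ and the area evolution, $-\tfrac{d}{dt}\mu_t(M)=\int HF\leq C\int F^2\leq C'(1-t)^{-1}\mu_t(M)$, which integrates to a polynomial area bound.
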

\begin{remark}
If $f:\Gamma\to\R$ is concave then we can always bound $F\leq CH$ since $f(z)\leq C(n,f)\tr(z)$, where $C(n,f):=f(1,\dots,1)/n$ and $\tr(z):=z_1+\dots+z_n$. The opposite inequality holds if $f$ is convex. 
\end{remark}
\begin{proof}[Proof of Lemma \ref{lem:area_conds}]
(i) This follows directly from the formula
\[ \int_t^0 \hspace{-2mm} \int F = |\Omega_t| - |\Omega_0|.\]

(ii) Applying H\"{o}lder's inequality twice, we obtain
\begin{align*}
\int_t^0\hspace{-2mm} \int F(\cdot, s) \,d\mu_s \, ds &\leq \int_t^0  \mu_s(M)^\frac{1}{2} \left(\int F(\cdot, s)^2 \,d\mu_s\right)^\frac{1}{2} ds\\
		&\leq (-t)^\frac{1}{2} \left( \int_t^0 \mu_s(M) \int F(\cdot,s)^2 \,d\mu_s \, ds\right)^\frac{1}{2}.
\end{align*}
Bounding $F \leq CH$ for $t\leq 0$ and applying \eqref{area_decay} then gives
\begin{align*}
\left(\int_t^0\hspace{-2mm} \int F \, d\mu_s \, ds \right)^2 &\leq C(-t) \int_t^0 \mu_s(M) \int HF \,d\mu_s \, ds \\
		&= - C(-t) \int_t^0 \mu_s(M)\frac{d}{ds} \mu_s(M)\, d\mu_s \,ds\\
		&\leq -\frac{Ct}{2}\mu_t(M)^2
\end{align*}
from which \eqref{eq:volume_decay} follows by assumption.\\

(iii) As in \cite{HuSi15}, integrating the uniform bound for the speed implies a bound $\rho_+(t)\leq C(\sup_{M\times(-\infty,0]}|F|,\rho_+(0))(1-t)$ for the circumradius $\rho_+(t)$ of $M_t$ for $t<0$, and hence a bound for the enclosed volume $|\Omega_t|\leq C(n,\sup_{M\times(-\infty,0]}|F|,\rho_+(0))(1-t)^{n+1}$. We may then appeal to (i). \\

(iv) 
Taking $t_0 \to -\infty$ in the differential Harnack inequality \eqref{eq:Harnack}, we obtain $\partial_t F \geq 0.$ Thus $0<F(\cdot, t) \leq F(\cdot, 0)$ for all $t \leq 0$ and we may appeal to (iii). \\

(v) Bounding $F\leq CH$ for $t\leq 0$, \eqref{area_decay} and H\"{o}lder's inequality yield
\begin{align*}
-\frac{d}{dt} \mu_t(M) = \int HF\leq{}& C \int H^2\leq C\mu_t(M)^{1-\frac{2}{p}} \left(\int H^p \right)^\frac{2}{p}\,.
\end{align*}
By hypothesis, $\Vert H\Vert_{L^p}$ is uniformly bounded so that, rearranging, we obtain
\[\frac{d}{dt} \mu_t(M)^\frac{2}{p} \geq -C'\]
for $t<0$. Integrating yields
\[
\mu_t(M)\leq C''(1-t)^{\frac{p}{2}}
\]
for $t<0$ and \eqref{eq:volume_decay} then follows from part (ii). \\

(vi) The evolution equations \eqref{area_decay} and
\[\partial_t H = \Delta F + |A|^2F\]
(see \cite{HuPol}) yield
\begin{align*}
\frac{d}{dt} \int H^{n+k} 
& = -(n+k) \int H^{n+k-2} \Bigg [ (n+k-1)\langle \nabla H, \nabla F\rangle\\
 &\qquad\qquad\qquad\qquad\qquad- HF\left(|\AA|^2 + \frac{k}{n(n+k)}H^2\right)\Bigg ]\,.
\end{align*}
By assumption, the right hand side is non-negative. Integrating therefore yields
\[\int H^{n+k}\,d\mu_t\leq \int H^{n+k}\,d\mu_0\]
for $t\leq 0$ and we can now appeal to condition (v). \\

(vii) The evolution equation \eqref{area_decay} for the area and the hypotheses yield
\[
-\frac{d}{dt} \mu_t(M)=\int HF \leq C\int F^2 \leq C^3\frac{\mu_t(M)}{1-t}\,,
\]
which integrates to 
\[
\mu_t(M)\leq  \mu_0(M)(1-t)^{C^3}\,.
\]
The claim now follows from (ii).
\end{proof}

We can now prove the sphere characterisation, Theorem \ref{thm:sphere_char}.

\begin{proof}[Proof of Theorem \ref{thm:sphere_char}]
Fix $t \in (-\infty, 0]$ and denote the Gauss curvature of $M_t$ by $\mathfrak K$. Since $M_t$ is convex, $\nu$ is a diffeomorphism and 
\[\int_{M} \mathfrak K \, d\mu_t = c(n),\]
where $c(n)$ is the surface area of the unit sphere in $\mathbb{R}^{n+1}$. Uniform convexity then implies (cf. \cite{HuSi15})
\[\int_{M} F^n d\mu_t 
\leq C(n,f,\Gamma_0)\int_{M} \mathfrak K \, d\mu_t\leq C(n,f,\Gamma_0)\,.\]
It now follows from Lemma \ref{lem:area_conds} that $X$ satisfies \eqref{eq:volume_decay}.

Consider first the case that the speed is given by a concave function of the principal curvatures. Set
\[
\varphi := \int G_{\sigma,+}^p\,,
\]
where $G_\sigma$ is the modified curvature function introduced in the proof of Theorem \ref{cylindrical_ests} (with $m=0$ and any $\varepsilon >0$). Then, recalling \eqref{cyl_Lp_ancient}, we choose $\ell>0$ such that
\begin{equation}
\label{anc_lp}
\frac{d}{dt}\varphi\leq-\int |A|^2 G_{\sigma,+}^p
\end{equation}
for $p\geq \ell^{-1}$ and $\sigma = \ell p^{-\frac{1}{2}}$. Set $\gamma = \frac{2}{\sigma p +1}$ and choose $p$ so large that $\gamma <1$. H\"{o}lder's inequality and the estimate $G_\sigma\leq C(n,f,\Gamma_0)F^\sigma$ imply
\begin{align*}
\varphi &\leq \int F^{\sigma p \gamma}G_{\sigma, +}^{p(1-\gamma)} \\
		&\leq  \int F^{\sigma p \gamma -2(1-\gamma)}(F^2 G_{\sigma, +}^p)^{1-\gamma} = \int F^\gamma (F^2 G_{\sigma, +}^p)^{1-\gamma} \\
		&\leq \left( \int F\right)^\gamma \left( \int F^2 G_{\sigma, +}^p\right)^{1-\gamma}
\end{align*}
Combining this with \eqref{anc_lp} and estimating $F \leq C(n, f, \Gamma_0) |A|$, we obtain 
\begin{align*}
\frac{d}{dt} \varphi &\leq -C\int F^2G_{\sigma,+}^p\\
		&\leq -C \left(\int F \right)^{-\frac{\gamma}{1-\gamma}} \varphi^\frac{1}{1-\gamma} = -C\left(\int F \right)^{-\beta} \varphi^{1+\beta},
\end{align*}
with $\beta := \frac{2}{\sigma p -1}$, which we take to be positive. 

Suppose now that $\varphi(s) >0$ for some $s \in (-\infty,0]$, so that $\varphi(t) \geq \varphi(s) > 0$ for all $t < s$. The last inequality then implies 
\[\frac{d}{dt} \varphi^{-\beta} = - \beta \varphi^{-1-\beta} \frac{d}{dt} \varphi \geq C\beta \left(\int F \right)^{-\beta},\]
which we integrate in time to obtain 
\begin{align*}
\varphi^{-\beta} (s) \geq C \beta \int_t^s \left(\int F \right)^{-\beta} \geq C\beta (s-t)^{1+\beta} \left(\int_t^s \hspace{-2mm}\int F\right)^{-\beta}.
\end{align*}
Combined with the assumption \eqref{eq:volume_decay}, this gives
\[\varphi^{-\beta}(s) \geq C\beta \frac{(s-t)^{1+\beta}}{(1-t)^{\beta r}}.\]
Choosing $p$ so large that $1 + \beta(1-r) >0$ and taking $t \to - \infty$ then yields a contradiction. We conclude that $\varphi\equiv 0$ for $t<0$. Since $\varepsilon$ was arbitrary, it follows that $G\equiv 0$ for $t<0$; i.e. $M_t$ is umbilic. The claim follows. The argument for convex speeds and surface flows proceeds similarly, with $G_\sigma$ instead given by the pinching functions devised in \cite{AnLa14} (with $m=0$) and \cite[\S 5.3.1]{L} (cf. \cite{An10}) respectively.

\end{proof}

Theorem \ref{thm:sphere_char_typeI} now follows from a blow-up argument (cf. \cite[\S 4]{HuSi15}).

\begin{proof}[Proof of Theorem \ref{thm:sphere_char_typeI}]
If the solution is uniformly pinched, in the sense that $\kappa(M\times(-\infty,0])\subset\Gamma_0\Subset \Gamma_+$, then the claim follows from Theorem \ref{thm:sphere_char}. Otherwise, there is a sequence of points $(x_k,t_k)\in M\times(-\infty,0]$ with $t_k\to-\infty$ such that
\[
\frac{\kappa_1}{|A|}(x_k,t_k)\to 0\quad\text{as}\quad k\to\infty\,.
\]
Note that bounded speed ratios imply type-I curvature decay, since, by an elementary ODE comparison argument,
\[
C^{-1}\min_{M\times\{t\}}F\leq \frac{1}{\sqrt{1-t}}\leq C\max_{M\times\{t\}}F
\] 
for some $C=C(n,f,\Gamma_0,M_0)$. Thus, we can assume in both cases that
\[
\sqrt{-t}F(\cdot,t)\leq C<\infty
\]
for $t<0$ for some $C<\infty$. Integrating yields $\mathrm{diam}(M_t)\leq C'\sqrt{-t}$ for $t<0$, where $C'$ depends on $C$ and $\mathrm{diam}(M_0)$. By the technical condition $\kappa(M\times(-\infty,0])\subset\Gamma_0\Subset\Gamma$, we can also estimate
\[
|A|\leq C''F
\]
for all $t\in (-\infty,0]$ for some $C''=C''(n,f,\Gamma_0)$. Consider now the rescaled flow
\[
X_k(x,t):=\lambda_kX(x,\lambda_k^{-2}t)\,,\quad t\in[-2,-1]\,,
\]
where $\lambda^{-1}_k:=\sqrt{-t_k}$. The type-I condition and the diameter bound imply uniform curvature and diameter bounds for the sequence. Since $\kappa(M\times(-\infty,0])\subset\Gamma_0\Subset\Gamma$, a well-known argument implies that a subsequence converges locally uniformly in $C^\infty(M\times[-2,-1])$ to a smooth compact solution of \eqref{eq:CF} (see, for example, \cite[Proposition 4.2 and Appendix C]{L} or \cite[Sections 10 and 13]{AMY}). 
Since, $\kappa(M\times(-\infty,0])\subset\Gamma_0\Subset\Gamma$, the limit satisfies $F>0$. Since there is a point on the limit satisfying $\kappa_1=0$, the strong maximum principle implies that $\kappa_1\equiv 0$ and the limit splits off a line (see \cite[Theorems 4.21 and 4.23]{L} and \cite[Theorem A.1]{BoLa16}). This contradicts compactness.
%
\end{proof}

Theorem \ref{thm:sphere_char_Harnack} follows as in \cite{HuSi15} by way of Andrews' Harnack inequality.
\begin{proof}[Proof of Theorem \ref{thm:sphere_char_Harnack}]
First note that, by \cite[Lemma 4.4]{HuSi15}, a bounded isoperimetric ratio (\ref{hyp:rii}) implies bounded eccentricity (\ref{hyp:ecc}). Next, by comparing our solution with appropriate shrinking sphere solutions, we observe that
\[
\rho_-(M_t)\leq C\sqrt{1-t}
\]
for $t<0$, where $C=C(n,f,M_0)$. Thus, bounded eccentricity (\ref{hyp:ecc}) implies bounded rescaled diameter
\begin{align}\label{eq:diameterbound}
\frac{\mathrm{diam}(M_t)}{\sqrt{1-t}}\leq \sqrt{C'}\,.
\end{align}

Integrating Andrews' differential Harnack inequality \cite{An94b} we obtain, for a convex ancient solution of \eqref{eq:CF} with inverse-concave speed,
\begin{align*}
F(x,t)\leq F(y,s)\,\mathrm{exp}\left(\frac{\mathrm{diam}^2{M_t}}{4(s-t)}\right) \;\; \text{for all }\; x,\, y\in M \;\text{ and }\; t<s<1\,.
\end{align*}
Applying \eqref{eq:diameterbound} and setting $s=(1+t)/2$ yields
\[
\max_{M\times \{t\}}F\leq \mathrm{e}^{\frac{C'}{2}}\min_{M\times\{\frac{1+t}{2}\}}F\leq C''(1-t)^{-\frac{1}{2}}\,.
\]
The claim now follows from Theorem \ref{thm:sphere_char_typeI}.
\end{proof}

The convexity estimate, Theorem \ref{thm:anc_cnvx}, is proved by a similar argument to that of Theorem \ref{thm:sphere_char}.

\begin{proof}[Proof of Theorem \ref{thm:anc_cnvx}] To see that the solution is convex, we apply the arguments of Theorem \ref{thm:sphere_char} to the pinching function constructed in \cite[Section 3]{ALM14} in case $f$ is convex, and to the pinching function constructed in \cite[Section 3]{ALM15} in case $n=2$. 
\end{proof}

With the convexity estimate in place, we can also obtain a sharp estimate for the exscribed curvature.
\begin{theorem}
\label{anc_extNC}
Let $f:\Gamma\to\R_+$ be a convex admissible speed. Let $X:M \times (-\infty, 1) \to \mathbb{R}^{n+1}$ be a compact ancient solution of \eqref{eq:CF} satisfying  $\kappa(M\times(-\infty,0])\subset\Gamma_0\Subset \Gamma$, the decay condition \eqref{eq:volume_decay} and the exterior non-collapsing condition $\underline k\geq-\Upsilon F$ for some $\Upsilon<\infty$. Then $\underline k>0$. In particular, $M_t$ bounds a strictly convex body for all $t$.
\end{theorem}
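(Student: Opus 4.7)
The plan is to combine Theorem~\ref{thm:anc_cnvx} with a time-integrated Stampacchia iteration on the exscribed pinching function from Section~\ref{sec_exsc}, following the template established in the proof of Theorem~\ref{thm:sphere_char}. Since $f$ is convex and the volume decay hypothesis \eqref{eq:volume_decay} is assumed, Theorem~\ref{thm:anc_cnvx} immediately yields $\kappa_1\geq 0$. The standard strong maximum principle argument for ancient solutions---a compact, weakly convex ancient solution which is not strictly convex must split off a line, contradicting compactness---then upgrades this to $\kappa_1>0$.

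With $\kappa_1\geq 0$ in hand, the additive constant $K$ appearing in the pinching function $G_\sigma:=(G-\varepsilon F)F^{\sigma-1}-K$ of Section~\ref{sec_exsc} may be taken equal to zero for every $\varepsilon>0$, since the convexity estimate $\kappa_1\geq -\tfrac{\varepsilon}{2}F-K\min\{1,F\}$ used in the derivation of Proposition~\ref{exscribed_Lp} now holds trivially. Reprising that derivation with $K=0$ yields the clean inequality
\[
\frac{d}{dt}\int G_{\sigma,+}^p\leq -\sigma p\int|A|_F^2\,G_{\sigma,+}^p
\]
for $p\geq \ell^{-4}$ and $\ell^{-1}p^{-1}\leq \sigma\leq \ell p^{-1/2}$; moreover, the exterior non-collapsing bound $G_1\leq \Upsilon F$ together with $G_2\geq \Theta F$ gives $G_\sigma\leq CF^\sigma$ on $M\times(-\infty,0]$. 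Setting $\varphi:=\int G_{\sigma,+}^p$ and estimating $F\leq C|A|$, H\"older's inequality exactly as in the proof of Theorem~\ref{thm:sphere_char} then produces the ODE inequality
\[
\frac{d}{dt}\varphi^{-\beta}\geq c\Bigl(\int F\Bigr)^{-\beta},\qquad \beta:=\frac{2}{\sigma p-1},
\]
whenever $\varphi>0$. Integrating from $t$ to any $s\leq 0$ with $\varphi(s)>0$, applying Jensen's inequality, and invoking \eqref{eq:volume_decay} yields $\varphi(s)^{-\beta}\geq c(s-t)^{1+\beta}(1-t)^{-\beta r}$; choosing $p$ so large that $1+\beta(1-r)>0$, the right-hand side diverges as $t\to -\infty$, a contradiction. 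Hence $G_{\sigma,+}\equiv 0$ on $(-\infty,0]$, and since $\varepsilon>0$ was arbitrary, $\underline k\geq 0$ there; applying the same reasoning to time-translates of the solution extends this to all of $M\times(-\infty,1)$.

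Finally, to upgrade $\underline k\geq 0$ to $\underline k>0$, I would use the strict convexity established above. If $\underline k(x_0,t_0)=0$ at some point, then the infimum $\inf_{y\neq x_0}k(x_0,y)=0$ is attained either in the limit $y\to x_0$, forcing $\kappa_1(x_0,t_0)=0$, or at some $y_\infty\neq x_0$ lying on the tangent hyperplane to $M_{t_0}$ at $x_0$; both alternatives contradict strict convexity of $M_{t_0}$. The conclusion that $M_t$ bounds a strictly convex body then follows from $\underline k>0$ together with compactness and the embeddedness it ensures. The main technical point is the calibration of the exponents: we need $p$ large enough that $\beta=2/(\sigma p-1)$ satisfies both the constraints of Proposition~\ref{exscribed_Lp} and $1+\beta(1-r)>0$, which is possible precisely because $r\geq (n+1)/2>1$ forces $\beta(r-1)<1$ for all sufficiently large $p$.
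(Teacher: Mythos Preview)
Your proposal is correct and follows essentially the same approach as the paper: invoke Theorem~\ref{thm:anc_cnvx} to set $K=0$ in the pinching function from Section~\ref{sec_exsc}, obtain the clean $L^p$ inequality \eqref{anc_lp} from \eqref{eq:exsc_fin}, and run the time-integrated ODE argument of Theorem~\ref{thm:sphere_char} to force $G_{\sigma,+}\equiv 0$ and hence $\underline k\geq 0$. The only difference is in the final strict inequality: the paper obtains $\underline k>0$ directly from the strong maximum principle applied to $\underline k$ (using finiteness of the circumradius to rule out $\underline k\equiv 0$), whereas you take a small detour through the splitting theorem to first establish $\kappa_1>0$ and then argue geometrically---your route is valid but the paper's is more direct and avoids invoking the splitting theorem.
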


\begin{proof}
By Theorem \ref{thm:anc_cnvx}, the solution is convex, so we may take $K = 0$ in the definition of the pinching function $G_\sigma$ constructed to prove the exscribed curvature estimate in Section \ref{sec_exsc}. For $p$ sufficiently large and $\sigma  \sim \ell  p^{-\frac{1}{2}}$, \eqref{eq:exsc_fin} then implies an estimate of the form \eqref{anc_lp}. Continuing as in the proof of Theorem \ref{thm:sphere_char}, we conclude that $G_\sigma$ vanishes identically, and thereby $\underline k \geq 0$. The strict inequality follows from the strong maximum principle since the circumradius of $M_t$ is finite for all $t$.
\end{proof}

We now turn our attention to the cylindrical estimates. 

\begin{proposition}\label{prop:m_convex_anc}
Let $f:\Gamma\to\R_+$ be an admissible speed and let $X:M\times(-\infty, 1) \to  \mathbb{R}^{n+1}$ be a compact ancient solution of \eqref{eq:CF} satisfying $\kappa(M\times(-\infty,0])\subset \Gamma_0 \Subset \Gamma_{m+1}\cap \Gamma$ for some $m\in\{0,\dots,n-1\}$ and the volume decay estimate \eqref{eq:volume_decay}. 
\begin{itemize} 
\item[--] If $f$ is concave, then 
\begin{equation}
\label{cyl_anc_concave}
\kappa_n-c_m F \leq 0\quad\text{on}\quad M \times (-\infty ,1)\,.
\end{equation}
\item[--] If $f$ is convex, then 
\begin{equation}
\label{cyl_anc_convex}
\kappa_1+\dots+\kappa_{m+1}-c_m F \geq 0\quad\text{on}\quad M \times (-\infty ,1)\,.
\end{equation} 
\item[--] If $f$ is concave and, in addition, $X$ is interior non-collapsing (i.e. 
if $\overline k \leq \Lambda F$ for some $\Lambda<\infty$) then
\begin{equation}
\label{insc_anc}
\overline k - c_m F \leq 0\quad \text{on}\quad M \times (-\infty ,1)\,.
\end{equation}
\end{itemize}
\end{proposition}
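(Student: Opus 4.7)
The plan is to apply the Stampacchia/Hölder contradiction argument already developed for Theorem~\ref{thm:sphere_char} to the sharper pinching functions built in the proofs of Theorems~\ref{cylindrical_ests} and~\ref{inscribed_improves} and to the convex analogue of \cite{AnLa14}. Each part reduces to showing that a non-negative pinching function $G_{\sigma,+}$ vanishes identically on $M\times(-\infty,0]$, after which the geometric inequalities follow by sending $\varepsilon\to 0$. The argument then extends to all of $M\times(-\infty,1)$ by running it on each time-interval $(-\infty,t_0]$ with $t_0<1$.

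For part~\eqref{cyl_anc_concave}, I would fix $\varepsilon>0$ and take $G_\sigma:=(G-\varepsilon F)F^{\sigma-1}$ exactly as in the proof of Theorem~\ref{cylindrical_ests}, where $G=G_1^2/G_2$. By \eqref{cyl_Lp_ancient}, there exist $\ell=\ell(n,f,\Gamma_0,\varepsilon)>0$ and $p_0<\infty$ such that, for every $p\geq p_0$ and every $\sigma=\ell p^{-1/2}$,
\[
\frac{d}{dt}\int G_{\sigma,+}^p\leq -C\int |A|^2\,G_{\sigma,+}^p\,.
\]
Since $F\leq C|A|$ and $G_\sigma\leq C F^\sigma$, the same Hölder interpolation used in the proof of Theorem~\ref{thm:sphere_char} produces the differential inequality
\[
\frac{d}{dt}\varphi\leq -C\left(\int F\right)^{-\beta}\varphi^{1+\beta},\qquad \varphi:=\int G_{\sigma,+}^p,\qquad \beta:=\frac{2}{\sigma p-1}>0\,.
\]
If $\varphi(s)>0$ for some $s\leq 0$, integrating $\frac{d}{dt}\varphi^{-\beta}$ from $t$ to $s$ and invoking the volume decay hypothesis \eqref{eq:volume_decay} yields
\[
\varphi^{-\beta}(s)\geq C\beta\,\frac{(s-t)^{1+\beta}}{(1-t)^{\beta r}}\,.
\]
Choosing $p$ so large that $1+\beta(1-r)>0$ and sending $t\to-\infty$ gives a contradiction. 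Hence $G_{\sigma,+}\equiv 0$ for every $\varepsilon>0$, so $\kappa_n\leq c_m F$.

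Part~\eqref{cyl_anc_convex} follows the same template, with $G_\sigma$ replaced by the pinching function built in \cite{AnLa14} for $(m+1)$-convex solutions of flows by convex admissible speeds; the corresponding $L^p$ inequality has the same structure, so the contradiction argument carries over verbatim. For part~\eqref{insc_anc}, once \eqref{cyl_anc_concave} is available, we have $\kappa_n-c_mF\leq 0$, so the estimate \eqref{usingknest} used in Section~\ref{sec_insc} holds with $K=0$. The $L^p$ inequalities of Proposition~\ref{insc_Lp_flags} (when $m\leq n-2$) and Proposition~\ref{prop:insc_hard} (when $m=n-1$) then reduce to exactly the clean form $\frac{d}{dt}\int G_{\sigma,+}^p\leq -C\int |A|^2 G_{\sigma,+}^p$ with $G_1:=\max\{\overline k-c_mF,0\}$ in place of $\max\{\kappa_n-c_mF,0\}$, and the same Hölder/contradiction step closes the proof.

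The principal technical obstacle is the order in which the three parts must be proved: the constant $K$ in the inscribed pinching function of Section~\ref{sec_insc} produces an inhomogeneous term $\sigma K^p\int|A|_F^2$ in the $L^p$ estimates, which is not integrable in time along an ancient solution and breaks the contradiction scheme. This is resolved by first proving part~\eqref{cyl_anc_concave}, which supplies $\kappa_n\leq c_mF$ on $M\times(-\infty,0]$ and thereby allows the choice $K=0$ throughout Section~\ref{sec_insc}. Beyond this ordering and the verification that the constant $\beta=2/(\sigma p-1)$ can be made to satisfy $1+\beta(1-r)>0$ compatibly with the range $p\geq p_0$, $\sigma=\ell p^{-1/2}$ allowed by the $L^p$ estimates, the argument is a direct application of the machinery already developed in earlier sections.
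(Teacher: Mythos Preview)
Your proposal is correct and follows essentially the same approach as the paper: apply the contradiction argument from the proof of Theorem~\ref{thm:sphere_char} to the pinching functions of Theorem~\ref{cylindrical_ests}, of \cite{AnLa14}, and of Section~\ref{sec_insc}, in exactly the order you describe, using \eqref{cyl_anc_concave} to set $K=0$ before treating \eqref{insc_anc}. The paper additionally notes that for convex speeds with $m=n-1$ one may invoke the convexity estimate directly rather than \cite{AnLa14}, but this is a minor variation.
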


\begin{proof}
To obtain \eqref{cyl_anc_concave} for concave speeds, we proceed as in the proof of Theorem \ref{thm:sphere_char} with $G_\sigma$ given by the pinching function used in Theorem \ref{cylindrical_ests} (this time also for non-zero $m$). 

To obtain \eqref{cyl_anc_convex} for convex speeds, we instead use the pinching function introduced at the beginning of Section 4 in \cite{AnLa14} for $m\leq n-2$. The case $m=n-1$ follows from the convexity estimate.

The proof of the optimal inscribed curvature pinching \eqref{insc_anc} also follows the proof of Theorem \ref{thm:sphere_char}, using with the pinching function devised in \eqref{insc_pinching}, since, with \eqref{cyl_anc_concave} in place, we can take $K=0$ in \eqref{insc_pinching}, and thereby obtain the estimate \eqref{anc_lp} from \eqref{insc_Lp_ancient1} (for $m\leq n-2$) or \eqref{insc_Lp_ancient2} (for $m=n-1$). 
\end{proof}


In fact, the strong maximum principle yields strict inequality, at least for strongly concave speeds.

\begin{proposition}\label{prop:strict_m_convex_anc}
If $M$ is connected and $M_t$ is embedded for all $t<0$ and if $F$ is strictly concave in non-radial directions, then each of the inequalities \eqref{cyl_anc_concave} and \eqref{insc_anc} is strict, unless $m=0$ (in which case, $M_t$ is a shrinking sphere).
\end{proposition}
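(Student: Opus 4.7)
The plan is to argue by the strong maximum principle applied to the subsolutions $\kappa_n - c_m F$ and $\overline k - c_m F$, exploiting the strict concavity of $f$ to characterise the equality case and then appealing to compactness to rule out cylinder splittings.

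I would first establish strict inequality in \eqref{cyl_anc_concave}. The function $u := \kappa_n - c_m F$ is a distributional subsolution of $(\partial_t - \Delta_F)u \leq |A|_F^2 u$ (by the remark following Lemma \ref{pinching_evol}) and is non-positive by Proposition \ref{prop:m_convex_anc}. The strong maximum principle then yields either $u<0$ strictly on $M\times(-\infty,1)$, or $u\equiv 0$ on some backward parabolic set $M \times (-\infty, t_0]$. In the latter case, $f(\kappa) = c_m^{-1}\kappa_n = f(0, \ldots, 0, \kappa_n, \ldots, \kappa_n)$ (with $m$ zeros) identically. By strict concavity of $f$ in non-radial directions, together with monotonicity and 1-homogeneity (which imply strict superadditivity off the radial line), the equality $f(\kappa) = f(0, \ldots, 0, \kappa_n, \ldots, \kappa_n)$ combined with the componentwise inequality $\kappa \leq (\kappa_n, \ldots, \kappa_n)$ forces $\kappa$ to coincide with the cylinder configuration $(0, \ldots, 0, \kappa_n, \ldots, \kappa_n)$ at every point.

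If $m = 0$, this says $M_t$ is umbilic; the only connected, closed, umbilic hypersurfaces in $\mathbb{R}^{n+1}$ are round spheres, and $\{M_t\}$ must then be a family of shrinking spheres. If $m \geq 1$, then $\kappa_1 = \ldots = \kappa_m \equiv 0$ with multiplicity at least $m$. A strong maximum principle applied to $\kappa_1$ (or equivalently a splitting theorem as in \cite{L}, Theorems 4.21 and 4.23, or the variant in \cite{BoLa16}, Theorem A.1, cited in the excerpt) then shows that $M$ splits isometrically as $\mathbb{R}^m \times \Sigma$, contradicting compactness of $M$.

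For the inscribed curvature estimate \eqref{insc_anc}, I would apply the strong maximum principle analogously to $v := \overline k - c_m F$, using the barrier-sense evolution inequality \eqref{insc_evol} together with $(\partial_t - \Delta_F)F = |A|_F^2 F$. This yields either $v<0$ strictly, or $v \equiv 0$ on $M \times (-\infty, t_0]$. In the equality case, the additional non-positive term $-2\langle \nabla \overline k, \mathcal{S}(\nabla \overline k)\rangle_F$ in \eqref{insc_evol} must vanish on the open set $\oversl U = \{\overline k > \kappa_n\}$, where $\mathcal{S}$ is positive definite; this forces $\nabla \overline k \equiv 0$, and hence $\nabla F \equiv 0$, on $\oversl U$. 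A continuity/connectedness argument then propagates the equality to a point where $\overline k = \kappa_n$, at which $\kappa_n = c_m F$, and the cylindrical argument of the preceding paragraphs applies to yield the same dichotomy.

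The main obstacle will be the rigorous implementation of the strong maximum principle for the non-smooth quantity $\overline k$ (which requires the barrier-sense viscosity interpretation of \eqref{insc_evol} developed in \cite{ALM13}) and the handling of the corner set $\{\overline k = \kappa_n\}$, where $\overline k I - A$ becomes degenerate and $\mathcal{S}$ blows up. Propagating the equality $\overline k \equiv c_m F$ on $\oversl U$ to a point where $\kappa_n = c_m F$ (so that the cylindrical strong maximum principle can be invoked) will require a careful limiting argument across this corner set; this is analogous to the propagation arguments used in the Andrews non-collapsing literature.
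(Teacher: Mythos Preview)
Your proof of the cylindrical strict inequality contains a genuine gap. You claim that the pointwise identity $\kappa_n = c_m F$, i.e.\ $f(\kappa) = f(0,\ldots,0,\kappa_n,\ldots,\kappa_n)$, forces $\kappa$ to equal the cylinder configuration by strict concavity, monotonicity and ``strict superadditivity''. This is false: the set $\{z:z_n=c_mf(z)\}$ is a codimension-one cone which contains, but does not reduce to, the cylinder ray. For a concrete counterexample, take $n=3$, $f=\sigma_2^{1/2}$ (which is strictly concave in non-radial directions). Then $c_1^{-1}=f(0,1,1)=1$, and $\kappa=(\tfrac{1}{5},\tfrac{2}{3},1)$ satisfies $\sigma_2(\kappa)=1$, hence $\kappa_3=c_1 f(\kappa)$, yet is not a cylinder point. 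So the algebraic argument cannot work, and without it you cannot invoke the splitting theorem to contradict compactness.

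The paper uses the strict concavity of $f$ in a different place. It retains the non-positive gradient terms in the viscosity evolution inequality for $\kappa_n/F$, namely the Hessian term $\ddot F^{pq,rs}\nabla_nA_{pq}\nabla_nA_{rs}$ and the off-diagonal terms $-2\sum_k\sum_{\kappa_i<\kappa_n}\dot F^k(\kappa_n-\kappa_i)^{-1}(\nabla_kA_{in})^2$. Once the strong maximum principle forces $\kappa_n\equiv c_mF$ (so that $\kappa_n$ is smooth), all of these must vanish identically. Vanishing of the Hessian term, by \emph{strict} concavity in non-radial directions, gives $\nabla_nA=\mu A$ for some scalar $\mu$; combining this with the other vanishing terms and the Codazzi identity yields $\nabla F\equiv 0$, so each component of $M_t$ is a round sphere. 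Thus the role of strict concavity is to constrain $\nabla A$, not $\kappa$.

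For \eqref{insc_anc}, once the cylindrical strict inequality is established correctly one has $\kappa_n<c_mF=\overline k$, so $\oversl U$ is all of $M\times(-\infty,t_0]$ and no corner-set propagation is needed: the vanishing of $\langle\nabla\overline k,\mathcal S(\nabla\overline k)\rangle_F$ gives $\nabla\overline k\equiv 0$ (hence $\nabla F\equiv 0$) globally. In your proposal this step inherits the gap from the cylindrical argument.
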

\begin{proof}
To obtain the strict inequalities, we appeal to the strong maximum principle. First consider  \eqref{cyl_anc_concave}. Recall that the largest principal curvature $\kappa_n$ satisfies
\begin{align*}
(\partial_t-\Delta_F)\kappa_n\leq{}& |A|^2_FF+\ddot F^{pq,rs}\nabla_nA_{pq}\nabla_nA_{rs}\\
{}&-2\sum_{k=1}^n\sum_{\kappa_i<\kappa_n}\frac{\dot F^k}{\kappa_n-\kappa_i}(\nabla_kA_{in})^2
\end{align*}
in the viscosity sense. This is proved by a well-known argument using the `two-point function' $K:TM\to\R$, $(x,y)\mapsto A_x(y,y)/g_x(y,y)$ as a smooth lower support for $\kappa_n$ (Cf. \cite[Theorem 3.2]{An07}). It follows that
\begin{align}
(\partial_t-\Delta_F)\frac{\kappa_n}{F}\leq{}&\frac{2}{F}\left\langle\nabla\frac{\kappa_n}{F},\nabla F\right\rangle+\frac{1}{F}\ddot F^{pq,rs}\nabla_nA_{pq}\nabla_nA_{rs}\nonumber\\
{}&-\frac{2}{F}\sum_{k=1}^n\sum_{\kappa_i<\kappa_n}\frac{\dot F^k}{\kappa_n-\kappa_i}(\nabla_kA_{in})^2\label{eq:SMPconcave}
\end{align}
in the viscosity sense. Since $F$ is concave, the strong maximum principle \cite{DL04} implies that equality $\kappa_n=c_mF$ can only be attained at some point $(x_0,t_0)\in M\times(-\infty,1)$ if it holds identically in $M\times(-\infty,t_0]$. In that case, $\kappa_n$ is smooth (for $t<t_0$ as we henceforth implicitly assume) and \eqref{eq:SMPconcave} implies that
\[
\nabla A_{nn}=\nabla\kappa_n\equiv c_m\nabla F\,,
\]
\[
\nabla A_{in}\equiv 0\quad \text{for all} \quad \kappa_i<\kappa_n
\]
and, since $F$ is strictly concave in non-radial directions,
\[
\nabla_nA=\mu A\quad\text{for some}\quad \mu:M\times(-\infty,t_0]\to\R\,.
\]
It follows that $\nabla_kF\equiv 0$ for all $k\neq n$, since
\[
c_m\nabla_kF=\nabla_kA_{nn}=\nabla_nA_{kn}=\mu A_{kn}=0\,.
\]
If $\mu(x,t)=0$, then 
\[
\nabla_nF=\dot F^{ij}\nabla_nA_{ij}=\mu\dot F^{ij}A_{ij}=\mu F=0
\]
at $(x,t)$ and we deduce that $\nabla F=0$ at $(x,t)$. If instead $\mu(x,t)\neq 0$ then for each $\kappa_i<\kappa_n$
\[
\mu\kappa_i=\mu A_{ii}=\nabla_nA_{ii}=\nabla_iA_{ni}=0\,.
\]
Since $\kappa_n=c_mF$, this means that $\kappa_1=\dots=\kappa_m=0$ and $\kappa_{m+1}=\dots=\kappa_n$. In particular, the multiplicity of $\kappa_n$ is equal to $n-m\geq 2$. Thus, applying the same argument to $\kappa_{n-1}$ yields $\nabla_kF\equiv 0$ for all $k\neq n-1$. We conclude again that $\nabla F=0$. It follows that each connected component of $M_t$ is a round sphere for $t\in(-\infty,t_0]$ and hence, by uniqueness of compact solutions of \eqref{eq:CF}, for $t\in (-\infty,1)$ \cite{Korevaar}. The claim follows.


Next, consider \eqref{insc_anc}. Suppose, contrary to the claim, that equality $\overline k=c_m F$ is attained at some point $(x_0,t_0)\in M\times(-\infty,1)$. Since
\[
(\partial_t-\Delta_F)\frac{\overline k}{F}\leq \frac{2}{F}\left\langle\nabla\frac{\overline k}{F},\nabla F\right\rangle
\]
in the viscosity sense, the strong maximum principle \cite{DL04} implies that $\overline k\equiv c_mF$ on $M\times (-\infty,t_0]$. In particular, $\overline k$ is smooth (for $t<t_0$ as we henceforth implicitly assume). Moreover, by the previous claim, $\kappa_n<\overline k$. The evolution equation \eqref{insc_evol} now implies
\[
0\equiv (\partial_t-\Delta_F)\frac{\overline k}{F}\leq -\frac{2}{F}\sum_{i=1}^n\frac{(\nabla_i\overline k)}{\overline k-\kappa_i}\leq 0\,.
\]
It follows that $\nabla\overline k\equiv 0$ and hence $\nabla F\equiv 0$ on $M\times(-\infty,t_0]$, which implies the claim as above.
\end{proof}


This leads to a further characterisations of the sphere for concave speeds.

\begin{proof}[Proof of Theorem \ref{thm:2conv_sphere}]
By Lemma \ref{lem:area_conds} (vii), the volume decay condition \eqref{eq:volume_decay} is satisfied. Thus, by Proposition \ref{prop:m_convex_anc}, 
\[\kappa_n - c_1 F \leq 0\]
on $M\times (-\infty, 0)$. This implies that $X$ is weakly convex, i.e. $\kappa_1 \geq 0$. Since $M$ is compact, the splitting theorem \cite[Theorem 5.1]{BoLa16} can be applied to show that the solution is in fact convex, as long as the restriction of $f$ to the set $\{z \in \mathbb{R}^n : z_1 = 0,\, z_2, \dots, z_n >0\}$ is inverse-concave. That this follows from inverse-concavity is proved in \cite[Remark 1 (6)]{AMY}. 
The claim now follows from Theorem \eqref{thm:sphere_char_typeI}.
\end{proof}

\begin{proof}[Proof of Theorem \ref{thm:strict_conv_sphere}]
The proof is similar to the proof of Theorem \ref{thm:sphere_char_typeI}
: By the type-I condition and the pinching assumption, the volume decay estimate \eqref{eq:volume_decay} needed to apply Proposition \ref{prop:m_convex_anc} holds. Combined with Proposition \ref{prop:strict_m_convex_anc}, this yields $\kappa_n-c_mF<0$, which, in particular, implies $\kappa_1+\dots+\kappa_m>0$. We claim that, unless $m=0$, $\kappa_n-c_mF\leq -\alpha F$, which implies that $\kappa_1+\dots+\kappa_m\geq \alpha'F$ (cf. \cite[Claim 4.2]{BoLa16}). This follows from a blow-down argument like the one applied in Theorem \ref{thm:sphere_char_typeI}
. Indeed, if the claim does not hold, then there is a sequence of points $(x_k,t_k)\in M\times(-\infty,0]$ with $t_k\to-\infty$ such that
\[
0>\frac{\kappa_n-c_mF}{F}(x_k,t_k)\to 0\quad\text{as}\quad k\to\infty\,.
\]
As in the proof of Theorem \ref{thm:sphere_char_typeI}
, after rescaling by a factor $\lambda_k:=1/\sqrt{-t_k}$ and passing to a subsequence we obtain a sequence of flows which converge to a compact limit flow $X_\infty:M\times(-2,-1]\to\R^{n+1}$ satisfying $\kappa_n-c_mF\leq 0$ with equality at some point $x_\infty\in M$ at time $t=-1$. Unless $m=0$, this contradicts the strong maximum principle for $\kappa_n$ as applied in Proposition \ref{prop:strict_m_convex_anc}. Thus, there is some $\alpha'<0$ such that $\kappa_1+\dots+\kappa_m\geq \alpha'F$ and we conclude from Proposition \ref{prop:m_convex_anc} that $\kappa_n-c_{m-1}F\leq 0$. Repeating the argument finitely many times, we conclude that $\kappa_n\leq c_0F$, which implies that the solution is a shrinking sphere.
\end{proof}

To prove the final sphere characterisation, we will require a lemma. Recall that a smooth embedding $X : \mathbb{R}^n \to \mathbb{R}^{n+1}$ is called a translating soliton of \eqref{eq:CF} if there is a unit vector $T \in \mathbb{R}^{n+1}$ such that
\[F(x) = - \langle \nu(x), T\rangle\quad\text{for all} \quad x\in \R^n\] 
This ensures that, up to composition with a time-dependent tangential reparameterisation, the family of immersions $X(\cdot, t) := X(\cdot) + tT$ solves \eqref{eq:CF} for all $t \in (-\infty,\infty)$.

\begin{lemma}
\label{lem:trans_gap}
Let $f: \Gamma \to \mathbb{R}$ be a concave admissable speed such that $\Gamma_+ \subset \Gamma $. Then, there is a positive constant $\delta_0 = \delta_0(n, f)$ such that any strictly convex translating soliton $X: \mathbb{R}^n \to \mathbb{R}^{n+1}$ of \eqref{eq:CF} which is uniformly two-convex (in the sense that $\kappa(\R^n)\subset \Gamma_0\Subset\Gamma_2\cap\Gamma$) satisfies
\[\sup_{\mathbb{R}^n} \frac{|\nabla A|^2}{F^4} \geq \delta_0\,.\]
\end{lemma}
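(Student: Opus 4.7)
The strategy is to argue by contradiction using compactness and rigidity. Suppose no such $\delta_0>0$ exists; then there is a sequence of strictly convex, uniformly two-convex translating solitons $X_k:\R^n\to\R^{n+1}$ of \eqref{eq:CF} with associated unit vectors $T_k$ and $\sup_{\R^n}|\nabla A_k|^2/F_k^4\to 0$. The translator identity $F_k=-\langle\nu_k,T_k\rangle$ gives $F_k\leq 1$, and uniform ellipticity on $\Gamma_0$ yields a pointwise curvature bound $|A_k|\leq C(n,f,\Gamma_0)$. Since $f$ is concave and uniformly elliptic on $\Gamma_0$, the translator equation is a uniformly elliptic, concave fully nonlinear equation for the local graphical representations, so Evans--Krylov and Schauder theory give uniform $C^{k,\alpha}_{\mathrm{loc}}$ bounds for the $X_k$ about any base point.

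I would select base points $p_k\in\R^n$ with $F_k(p_k)\geq\tfrac12\sup_{\R^n}F_k$, translate so that $X_k(p_k)=0$, and pass to a smooth pointed limit. For this limit to be nondegenerate one needs a uniform positive lower bound $\sup F_k\geq c(n,f,\Gamma_0)>0$, which I would extract from the compactness/classification theory for two-convex translators of fully nonlinear flows developed in \cite{Ha15, BoLa16}. Passing to a subsequence with $T_k\to T_\infty$, we obtain a smooth, connected, complete, weakly convex, uniformly two-convex translator $X_\infty:M_\infty\to\R^{n+1}$ through the origin, with translation vector $T_\infty$ and $F_\infty(0)>0$. By construction, $|\nabla A_\infty|^2/F_\infty^4\equiv 0$ on the open set $\{F_\infty>0\}$, so $\nabla A_\infty\equiv 0$ there.

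On each connected component of $\{F_\infty>0\}$, parallelism of $A_\infty$ forces $F_\infty=f(\kappa_\infty)$ to be locally constant; continuity then shows that the component containing the base point is clopen in $M_\infty$ and hence equals $M_\infty$. So $\nabla A_\infty\equiv 0$ and $F_\infty\equiv F_\infty(0)>0$ on all of $M_\infty$. The classical rigidity theorem for hypersurfaces of $\R^{n+1}$ with parallel second fundamental form (a consequence of de Rham splitting via Gauss--Codazzi) then identifies $M_\infty$ as congruent to a generalized cylinder $\R^m\times S^{n-m}_r$ with $r\in(0,\infty]$. Uniform two-convexity admits at most one zero principal curvature, forcing $m\leq 1$. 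When $m=0$ the function $-\langle\nu_\infty,T_\infty\rangle$ is non-constant on a round sphere unless $T_\infty=0$, contradicting $F_\infty>0$. When $m=1$, constancy of $\langle\nu_\infty,T_\infty\rangle$ forces $T_\infty$ parallel to the $\R$-factor, giving $\langle\nu_\infty,T_\infty\rangle\equiv 0$ and hence $F_\infty\equiv 0$, which contradicts $F_\infty=r^{-1}f(0,1,\ldots,1)>0$. Both cases are impossible, contradicting the existence of the sequence.

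The hard part is the second step: producing a nondegenerate smooth limit. This requires (a) a uniform positive lower bound on $\sup F_k$ so that the base-point normalisation does not degenerate, (b) a nonzero subsequential limit $T_k\to T_\infty$, and (c) local $C^\infty$ convergence of the embeddings with uniform curvature bounds. Item (c) is standard for uniformly elliptic concave fully nonlinear equations on $\Gamma_0$, but (a) and (b) rely on the structural results for two-convex translators in the fully nonlinear setting, which I would invoke from \cite{Ha15, BoLa16}.
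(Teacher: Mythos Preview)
Your overall contradiction strategy matches the paper's, but the paper's execution is considerably more direct. The crucial observation you miss is that for a strictly convex translator one always has $\sup F = 1$, attained at a unique ``tip'' where $\nu = -T$; the paper obtains this by arguing as in \cite[Lemma~2.1]{Ha15}. This single fact resolves your ``hard step (a)'' for free---there is nothing to extract from a classification theory, since $\sup_{\R^n} F_k = 1$ for every $k$.

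With this in hand, the paper avoids both your global-limit machinery and your cylinder-rigidity endgame. After normalising so the tip of each $X_j$ sits at the origin with a common translation direction $T$, one takes only a \emph{local} $C^2$ limit on a fixed ball about the tip; the bounds $F_j \leq 1$ and $|\nabla A_j|^2 \leq j^{-1}F_j^4 \leq j^{-1}$ already give this directly, without Evans--Krylov. The limit satisfies $\nabla A_\infty \equiv 0$ and $F_\infty(0) = 1$, hence $F_\infty \equiv 1$ on the ball. The translator identity then forces $\nu_\infty \equiv -T$, so the image lies in a hyperplane---contradicting $F_\infty = 1$ since $f$ is one-homogeneous. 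Your route via a complete pointed limit, the rigidity theorem for hypersurfaces with parallel second fundamental form, and the case-by-case exclusion of $S^n$ and $\R\times S^{n-1}$ would also reach a contradiction once (a) is secured, but it is substantially heavier than needed.
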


\begin{proof}
Suppose to the contrary that we have a sequence of strictly convex, uniformly two-convex translators $X_j:\R^n\to\R^{n+1}$ satisfying
\begin{equation}
\label{eq:badgrad}
\sup_{\mathbb{R}^n} \frac{|\nabla A_j|^2}{F_j^4} \leq \frac{1}{j}\;.
\end{equation}
Arguing as in Lemma 2.1 of \cite{Ha15} we find that each $F_j$ attains its maximum $F_j =1$ at a unique point in $\mathbb{R}^n$. Translating and rotating, we can ensure that this maximum is attained at the origin in $\mathbb{R}^{n+1}$ for every $1 \leq j \leq \infty$, and that each $X_j$ has the same translation vector, $T_j=T \in \mathbb{R}^{n+1}$ say. As in \cite{BoLa16}, the gradient estimate \eqref{eq:badgrad} forces a subsequence of the $X_j(B_1(0))$ to converge in $C^2$ to a limit embedding $X_\infty:B_1(0) \to \mathbb{R}^{n+1}$, where $B_1(0)$ denotes the unit ball in $\mathbb{R}^n$. Since $F_j \leq 1$, \eqref{eq:badgrad} also implies that $|\nabla A_\infty| \equiv 0$, and $F_\infty(0) = 1$, so in fact $F_\infty \equiv 1$. Passing the translator condition to the limit then yields $1 \equiv F_\infty = -\langle \nu_\infty, T\rangle$, so the image of $X_\infty$ is contained in a hyperplane, which is a contradiction. 
\end{proof}

\begin{proof}[Proof of Theorem \ref{thm:grad_sphere}]
We first note that for $\delta_0 = \delta_0(n ,f, \Gamma_0)$ small enough, the gradient estimate \eqref{eq:strong_grad} forces $X$ to satisfy condition (4) of Lemma \ref{lem:area_conds} with $k=1$. Hence, the area of $M_t$ obeys \eqref{eq:volume_decay} and we may invoke Proposition \ref{prop:m_convex_anc} to conclude that $\kappa_n - c_1 F \leq 0$. As in the proof of Theorem \ref{thm:2conv_sphere}, this implies that $X$ is strictly convex.

Suppose $X$ is not a shrinking sphere so that, by Theorem \ref{thm:sphere_char_typeI}, we may assume that it fails to have type-I curvature decay. Arguing similarly as in \cite{HuSi15} (cf. Appendix B in \cite{L}) we find that there is a family of rescalings of $X$ converging to a weakly convex, uniformly two-convex translating soliton. Applying again the splitting theorem and cylindrical estimate, we see that this translator is in fact strictly convex. Taking $\delta_0$ to be at least as small as the constant in Lemma \ref{lem:trans_gap} then yields a contradiction. 

\end{proof}

\bibliographystyle{acm}
\bibliography{FNL_pinching_refs}

\end{document}